\newcommand{\bm}[1]{\boldsymbol{#1}}
\renewcommand{\d}{\mathrm{d}}
\newcommand{\Y}{\bm{Y}}
\newcommand{\mE}{\mathcal{E}}
\newcommand{\bN}{\mathbb{N}}
\newcommand{\mB}{\mathcal{B}}
\newcommand{\U}{\bm{U}}
\newcommand{\V}{\bm{V}}
\newcommand{\DD}{\bm{D}}
\newcommand{\mX}{\mathcal{X}}
\newcommand{\mT}{\mathcal{T}}
\newcommand{\wt}[1]{\widetilde{#1}}
\def\mS{\mathcal{S}}
\def\mC{\mathcal{C}}
\newcommand{\F}{\mathcal{F}}
\newcommand{\iid}{\stackrel{iid}{\sim}}
\newcommand{\wh}[1]{\smash{\widehat{#1}}}
\def\C {\,|\:}
\def\C {\,|\:}
\def\mF{\mathcal{F}}
\def\b{\bm{\beta}}
\def\Y{\bm{Y}}
\def\q{\bm{q}}
\def\A{\bm{A}}
\def\x{\bm{x}}
\def\y{\bm{y}}
\def\mV{\mathcal{V}}
\def\BOmega{\bm{\Omega}}
\def\b{\bm{\beta}}
\def\bmS{\bm{\mathcal{S}}}
\renewcommand{\d}{\mathrm{d}\,}
\newcommand{\e}{\mathrm{e}}
\renewcommand{\1}{\mathbb{I}}
\newcommand{\N}{\mathbb{N}}
\newcommand{\R}{\mathbb{R}}
\newcommand{\Ha}{\mathcal{H}^\alpha}
\newtheorem{definition}{Definition}[section]
\newtheorem{lemma}{Lemma}[section]
\newtheorem{theorem}{Theorem}[section]
\newtheorem{remark}{Remark}[section]
\newtheorem{corollary}{Corollary}[section]
 \theoremstyle{assumption}
\numberwithin{equation}{section}
\theoremstyle{plain}
\begin{document}

\begin{frontmatter}
\title{Posterior Concentration for Bayesian  Regression Trees and Forests}
\runtitle{Posterior Concentration for Regression Trees}
\begin{aug}

\author{\fnms{Veronika} \snm{Ro\v{c}kov\'{a} and}\thanksref{t2,m2}\ead[label=e1]{first@somewhere.com}},
\author{\fnms{St\'{e}phanie} \snm{van der Pas}\thanksref{t1,m1}\ead[label=e2]{second@somewhere.com}  {$\quad$}\\
{\normalsize \sl Revision 29 May  2019\\
 {\small First Draft: 29 August 2017}}}\\


\thankstext{t2}{
veronika.rockova@chicagobooth.edu; \hspace{12cm}
This This work was supported by the James S. Kemper Foundation Faculty Research Fund at  the University of Chicago Booth School of Business.}
\thankstext{t1}{
svdpas@math.leidenuniv.nl}


\affiliation{University of Chicago \thanksmark{m2}}
\affiliation{Leiden University \thanksmark{m1}}

\address{}

\end{aug}

\begin{abstract}
Since their inception in the 1980's, regression trees have been one of the more widely used non-parametric prediction methods.
Tree-structured methods yield a histogram reconstruction of the regression surface,
where the bins correspond to terminal nodes of recursive partitioning. Trees are powerful, yet  susceptible to over-fitting. 
Strategies against overfitting have traditionally relied on  pruning  greedily grown trees. The Bayesian framework offers an alternative remedy against overfitting through priors. Roughly speaking, a good prior  charges smaller trees where overfitting does not occur.
 While the consistency of random histograms, trees and their ensembles  has been studied quite extensively, the theoretical understanding of the Bayesian counterparts has  been  missing. In this paper, we take a step 
towards understanding why/when do Bayesian trees and forests not overfit. To address this question, we study the speed at which the posterior concentrates around the true smooth regression function.
We propose a {\sl spike-and-tree} variant of the popular Bayesian CART prior and establish new theoretical results showing that  regression trees (and forests) (a) are capable of recovering smooth regression surfaces {(with smoothness not exceeding one)}, achieving optimal rates up to a log factor, (b) can adapt to the unknown level of smoothness and (c) can perform effective dimension reduction when $p>n$. These results  provide a piece of missing theoretical evidence explaining why Bayesian trees (and additive variants thereof) have worked so well in practice.
 
\end{abstract}


\begin{keyword}
\kwd{Additive Regression}
\kwd{Asymptotic Minimaxity}
\kwd{BART}
\kwd{Bayesian CART}
\kwd{Posterior Concentration}
\kwd{Recursive Partitioning}
\kwd{Regression Trees}
\end{keyword}

\end{frontmatter}

\section{Non-parametric Regression Setup}\label{sec:back}
The remarkable empirical success  of Bayesian tree-based regression \citep{cart1,cart2,bart} has raised considerable interest in understanding why and when these methods produce good results. 
Despite their extensive use in practice,  theoretical justifications have, thus far, been unavailable. To narrow this yawning gap, we consider the fundamental problem of making inference about an unknown regression function.

Our setup consists of  the nonparametric regression model
\begin{equation}\label{npr}
Y_i=f_0(\x_i)+\varepsilon_i,\quad \varepsilon_i\iid\mathcal{N}(0,1),
\end{equation}
where output variables  $\Y^{(n)}=(Y_{1},\dots, Y_{n})'$  are related in a stochastic fashion to a set of  $p$ potential covariates $\x_i=(x_{i1},\dots,x_{ip})',1\leq i\leq n$. We assume that the covariates $\x_i=(x_{i1},\dots,x_{ip})'$  are fixed and have been rescaled so that $\x_i\in[0,1]^p$. The true unknown regression surface $f_0(\x)$ will be assumed to be smooth,  possibly involving only a small fraction $q_0$ of the $p$ potential covariates.

In the absence of a parametric model, a natural strategy to estimate the unknown regression function is by  partitioning the covariate space into cells and then estimating the function locally within each cell from available observations. Such strategies yield  histogram  reconstructions of the regression surface and have been analyzed theoretically by multiple authors \citep{nobel,devroey_gyorfi,stone77}. 
Regression trees \cite{breiman_book} are among the most popular data-dependent histogram methods, where the  partitioning scheme is obtained through nested parallel axis splitting.
Trees are an integral constituent of ensemble methods that aggregate single tree learners into forests to boost prediction \cite{breiman2}. Tree-based regression, either single or ensemble, is arguably one of the most popular machine learning tools today. In particular, Bayesian variants of these methods {(Bayesian CART and BART)} \citep{bart,cart2} have earned a prominent role  as one of the top machine learners. While consistency results for classical trees and random forests have been available \cite{gordon_olshen1,gordon_olshen2,donoho,breiman_book,biau,biau_etal,scornet,wager2015}, theory on the also  very widely used Bayesian counterparts  is non-existent. 
 Our goal in this paper is to provide the first frequentist optimality results for {\sl Bayesian} trees, and their ensembles.

 Most of  the work on Bayesian nonparametric regression has revolved around Gaussian processes \cite{vdv_zanten,bhatta,yang}. { While there are multiple results on recursive partitioning (or histogram)  priors for Bayesian density estimation \cite{castillo_polya,scricciolo,lu, castillo_rous} (or non-linear autoregression \cite{ghosal_etal})},  the literature on Bayesian regression histograms  is far more deserted.  
One fundamental contribution is due to Coram and Lalley \cite{coram}, who  showed consistency of Bayesian binary regression with uniform mixture priors on step functions and with one predictor. 
More recently, van der Pas and Ro\v{c}kov\'{a} \cite{vdp_rockova} considered a similar setup for estimating step mean functions in Gaussian regression, again with a single predictor. 
This paper goes far beyond that framework,  addressing (a) the full-fledged high-dimensional setup with a diverging number of potential covariates, (b) tree ensembles for additive regression.

The purpose of this paper is to study the rate of convergence of posterior distributions induced by step function priors on the regression surface when $p>n$. 
The speed of convergence is measured by  the size of the smallest shrinking ball around $f_0$ that contains most of the posterior probability. In pioneering works,  Ghosal, Ghosh and Van der Vaart \cite{ghosal_etal} and Shen and Wasserman \cite{shen_wasser} obtained rates of convergence for infinite-dimensional parametric models with iid observations in terms of the size of the model (measured by the metric entropy) and concentration rate of the prior around $f_0$.   These results were later extended to   infinite-dimensional models that are {\sl not {iid}} by
Ghosal and Van der Vaart  \cite{ghosal_vdv}. Their general conceptual framework serves as an umbrella for our development.

\subsection{Our Contributions}
We initially assume that $f_0$ is H\"{o}lder continuous {(with smoothness not exceeding one)} and may depend only on a small fraction of $q_0$ predictors. The optimal rate of estimation of a $q_0$-variable function, which is known to be $\alpha$-smooth, is  $n^{-\alpha/(2\alpha+q_0)}$ \cite{stone_82}. 
Our first result shows that, with suitable regularization priors, single Bayesian regression trees achieve this minimax  rate (up to a log factor). 
In other words, the posterior behaves nearly as well as if we knew $\alpha$ and the number of active covariates $q_0$, concentrating at a rate that only depends on the number of active predictors. This is the first optimality result for Bayesian regression trees, demonstrating their adaptability and reluctance to overfit in high-dimensional scenarios with $p>n$. The regularization is achieved through our proposed {\sl spike-and-tree} prior, a new variant of the Bayesian CART prior  for dimension reduction and model-free variable selection.
Going further, we show that Bayesian {\sl additive regression trees} also achieve  (near) minimax-rate optimal performance when approximating a single smooth function. Finally, the tree ensembles are {\sl also}  shown to be certifiably optimal when the true function is an actual sum of smooth functions, again concentrating at a near minimax rate.


\subsection{Notation}
The notation $\lesssim$ will be used to denote  inequality up to a constant, {$a\asymp b$ denotes $a\lesssim b$ and $b\lesssim a$ and $a\vee b$ denotes $\max\{a,b\}$}. The $\varepsilon$-covering number of a set $\Omega$ for a semimetric $d$, denoted by $N(\varepsilon,\Omega,d),$ is the minimal number of $d$-balls of radius $\varepsilon$ needed to cover set $\Omega$. We denote by $\phi(\cdot;\sigma^2)$ the normal density with  zero mean and  variance $\sigma^2$ and by  $P_f^n= \bigotimes P_{f,i}$ the $n$-fold product measure of the $n$ independent observations under \eqref{npr} with a regression function $f$.
By $\mathbb{P}_n^{\x}=\frac{1}{n}\sum_{i=1}^n\delta_{\x_i}$ we denote the empirical distribution of the observed covariates and let $||\cdot||_n$ denote the empirical norm on $L_2(\mathbb{P}_n^{\x})$. With $||\cdot||_2$ we denote the standard Euclidean norm. 
For $\x\in \R^p$  we denote with $\x_\mS$ the subvector of $\x$ indexed by $\mS\subseteq\{1,\dots,p\}$. 
{With $\mathcal{C}^r$ we denote $r$-times continuously differentiable functions on $[0,1]$.}


\subsection{Outline}
We outline our goals and strategy in Section \ref{sec:back}. We then review several useful concepts for analyzing recursive partitioning schemes in Section \ref{sec:partitions}. In Section \ref{sec:singletree}, we state  our first main result on the posterior concentration for Bayesian CART.  In Section \ref{sec:treeensembles}, we develop tools for analyzing Bayesian additive regression trees and show their optimal posterior concentration in non-additive regression. Section \ref{sec:additiveensembles} presents the final development of our theory concerning the recovery of an additive regression function with additive trees. We conclude with a discussion in Section \ref{sec:discussion}. The proofs of our main theorems are presented in Sections \ref{proof:thm:trees}, \ref{sec:proof:additivetrees} and \ref{proof:thm:weak_learner}.

\section{Background}\label{sec:back}
In this section we lay down rudiments of our modus operandi. Our setup comprises  a sequence of statistical experiments with observations $\Y^{(n)}=(Y_1,\dots,Y_n)'$ and models $P_{f}^n$ defined in \eqref{npr}. Each model  $P_{f}^n$ is parametrized by a regression function $f:[0,1]^p\rightarrow \R$ that lives in an infinite-dimensional space $\mathcal{F}$ endowed with a prior distribution. With adequate priors, the posterior can exhibit nice frequentist properties, which get passed onto its location/scale summary measures. One such property is the ability to pile up in shrinking neighborhoods  around the true regression function $f_0$.  The speed at which the shrinking occurs is the posterior concentration rate and assesses the quality of the posterior beyond just the mere fact that  it is consistent. In our setup, we investigate such concentration properties in terms of $||\cdot||_n$ neighborhoods of $f_0$, where  
$$
||f-f_0||_n^2=\frac{1}{n}\sum_{i=1}^n[f(\x_i)-f_0(\x_i)]^2.
$$
The key to our approach will be drawing upon the foundational  posterior concentration theory for non-iid observations, laid down in the seminal paper by Ghosal and Van der Vaart \cite{ghosal_vdv}.  

Our results are obtained under a unifying hat of a general result which requires three conditions to hold.
Namely, suppose that  for a sequence $\varepsilon_n^2\rightarrow 0$ such that $n\,\varepsilon_n^2$ is bounded away from zero
 and sets $\mathcal{F}_n\subset \mathcal{F}$ we have

\vspace{-0.5cm}
\begin{equation}\label{eq:entropy}
\displaystyle \sup_{\varepsilon > \varepsilon_n} \log N\left(\tfrac{\varepsilon}{36}, \{f \in \mathcal{F}_n: \|f - f_0\|_n < \varepsilon\}, \|.\|_n\right) \leq n\varepsilon_n^2
\end{equation}
\vspace{-0.7cm}
\begin{equation}\label{eq:prior}
\displaystyle {\Pi(f \in \mathcal{F}:  \|f - f_0\|_n \leq  \varepsilon_n)}\geq \e^{-d\,n\,\varepsilon_n^2 }
\end{equation} 
\vspace{-0.7cm}
\begin{equation}\label{eq:remain}
\displaystyle \Pi(\mathcal{F} \backslash \mathcal{F}_n) = o(\e^{-(d+2)\,n\varepsilon_n^2})
\end{equation}
 for some $d>2$. Then it follows from Theorem 4 in \cite{ghosal_vdv} that the posterior distribution concentrates at the rate $\varepsilon_n^2$, i.e.
\begin{equation}\label{eq:post_conc}
\Pi\left(f\in\mathcal{F}: ||f_0-f||_n>M_n\,\varepsilon_n\C\bm{Y}^{(n)}\right)\rightarrow 0
\end{equation}
in $P_{f_0}^n$-probability, as $n\rightarrow\infty$, for any $M_n\rightarrow\infty$.

The conditions of  Ghosal and Van der Vaart \cite{ghosal_vdv} provide a very general recipe for showing posterior concentration in infinite-dimensional models. 
Our {goal in this paper is to obtain tailored statements  for Bayesian regression trees and forests.}
The major challenge  will be (a) designing a sequence of approximating spaces (a sieve) $\mathcal{F}_n\subset \mathcal{F}$ and (b) endowing  $\mathcal{F}$ with a prior distribution  such that the three conditions hold simultaneously for $\varepsilon_n$ as small as possible. To this end, we will build on, and  develop, tools for an analysis of recursive partitioning schemes. 

Throughout this work, we assume that the true regression function $f_0$ is 
H\"{o}lder continuous {and the smoothness parameter does not exceed one}, (or an additive composition thereof), in the sense made precise below.

\begin{definition}\label{holder_smooth}
With $\Ha_p$ we denote the space of uniformly $\alpha$-H\"{o}lder continuous functions, i.e.
\begin{align*}
\Ha_p=\left\{f:[0,1]^p\rightarrow \R;\,\,  ||f||_{\Ha}\equiv \sup_{\x,\y\in[0,1]^p}\frac{|f(\x)-f(\y)|}{ ||\x-\y||_2^\alpha}<\infty \right\},
\end{align*}
where $\alpha\in (0,1]$ and where $ ||f||_{\Ha}$ is the H\"{o}lder coefficient.
\end{definition}


{
The assumption $\alpha \leq 1$  is standard in the study of piecewise constant estimators and priors, see for example \cite{ghosal_vdv, scricciolo,  Engel1994}.  The reason for this limitation is that step functions are relatively rough; e.g. the approximation error of histograms for functions that are smoother than Lipschitz is at least of the order $1/K$, where $K$ is the number of  bins. The number of steps required to approximate a smooth function well   is thus too large, creating a costly bias-variance tradeoff.}

In some applications, it is reasonable to expect that the regression function $f_0$ depends only on a small fraction of input covariates.  For a set of indices $\mS\subseteq\{1,\dots,p\}$, we define
\begin{align*}
\mC(\mS)=\left\{f:[0,1]^p\rightarrow \R; f\,\,\text{is constant in directions}\, \{1,\dots,p\}\backslash \mS \right\}.
\end{align*}
 We will consider two estimation regimes:
\begin{itemize}
\item[(R1)] Regime 1: $f_0$ is $\alpha$-H\"{o}lder continuous and depends on an unknown subset $\mS_0$ of  $|S_0|=q_0$ covariates, i.e. {\sl $f_0\in\Ha_p\,\cap\, \mC(\mS_0)$. }
\item[(R2)] Regime 2: $f_0$ is an aggregate of $T_0$ $\alpha^t$-H\"{o}lder continuous functions $f_0^t$, $1\leq t\leq T_0$, each depending on an unknown subset $\mS_0^t$ of  $|S_0^t|=q_0^t$ covariates, i.e. $f_0(\x)=\sum_{t=1}^{T_0}f_0^t(\x)$, where $f_0^t\in\mathcal{H}_p^{\alpha^t}\,\cap\, \mC(\mS_0^t)$. 
\end{itemize}
For an estimation procedure to be successful in Regime 1, it needs to be doubly adaptive (with respect to $\alpha$ and  $q_0$).  We will show that both single trees and tree ensembles adapt accordingly, performing at a near-minimax rate. Regime 2 makes the performance discrepancies more apparent, where the additive structure of $f_0$ is appreciated by tree ensembles, which can achieve a faster convergence rate than single trees. A variant of Regime 2 was  studied by  \cite{yang} who derived minimax rates for estimating additive smooth functions and showed optimal concentration of additive Gaussian processes. 
Here, we  approximate $f_0$ with step functions and their aggregates. We limit considerations to step functions that are  underpinned by recursive partitioning schemes. 




\section{On  Recursive Partitions}\label{sec:partitions}

Tree-based regression  consists of first finding an underlying partitioning  scheme that hierarchically subdivides a dataset  into more homogeneous subsets, and then  learning a piecewise constant function on that partition.
This section serves to review several useful concepts for analyzing  such nested partitioning rules that will be instrumental in our analysis.


\subsection{General Partitions}
Given $K\in\bN$,  we define a $K$-{\sl partition} of $[0,1]^p$ as a sequence  $\bm{\Omega}=\{\Omega_k\}_{k=1}^K$ of $K$ contiguous {\sl rectangles} $\Omega_k\subset [0,1]^p$, where  $\cup_{k=1}^K\Omega_k=[0,1]^p$.   Sufficient conditions for consistency of regression histograms have traditionally revolved around two requirements on the partitioning cells $\Omega_k$ (Section 6.3 in \cite{devroye_book}). 
 The first one  pertains to cell counts, where $\Omega_k$'s should be large enough to capture a sufficient number of points to render local estimation meaningful. 
 The second one pertains to cell sizes, where $\Omega_k$'s should be small enough to detect local changes in the regression surface. We borrow some of these concepts for our theoretical analysis.

For the first requirement, let us formalize the notion of the cell size in terms of the empirical measure induced by observations $\mX=\{\x_1,\dots,\x_n\}$.   For each cell $\Omega_k$, we define by
\begin{equation}\label{muk}
\mu(\Omega_k)=\frac{1}{n}\sum_{i=1}^n\mathbb{I}(\x_i\in\Omega_k)
\end{equation}
 the proportion of observations falling inside $\Omega_k$. Throughout this work, we focus on partitions whose boxes can adaptively stretch and shrink, allowing the splits to arrange themselves in a data-dependent way \cite{stone85}.
The simplest data-adaptive partition is based on statistically equivalent blocks \cite{anderson1966, devroye_book}, where all cells have { approximately the same number} of points, i.e. {$\mu(\Omega_k)\asymp1/K$}.
We deviate from such a strict rule  by allowing for imbalance and define the so called {\sl valid} partitions. 

\begin{definition}(Valid Partitions)\label{ass:balance}
Denote by $\BOmega=\{\Omega_k\}_{k=1}^K$ a partition of $[0,1]^p$. We say that  $\BOmega$ is {\sl valid} if
\begin{equation}\label{eq:balance}
\mu(\Omega_k)\geq \bar{C}^2/n\quad \quad\text{for all}\quad k=1,\dots, K
\end{equation}
for some constant $\bar{C}^2\in\N\backslash\{0\}$.
\end{definition}
Valid partitions have non-empty cells, where the allocation {\sl does not} need to be  balanced. In {\sl balanced partitions}  (introduced in van der Pas and Ro\v{c}kov\'{a} \citep{vdp_rockova}), the cells satisfy
$ \frac{C_{min}^2}{K}\leq \mu(\Omega_k)\leq \frac{C_{max}^2}{K}$ for some $C_{min}<1<C_{max}$.
One prominent example of such balanced partitions is the median tree (or a $k$-$d$ tree)  \cite{bentley1975}, which will be discussed in the next section and will be used as a benchmark  tree  approximation towards establishing  Condition \ref{eq:prior}.

For the second requirement, let us formalize the notion of the cell size in terms of the local  spread of the data. To this end, we introduce the partition {\sl diameter} \cite{verma2009, kpotufe2010}.
 


\begin{definition}(Diameter)
Denote by $\bm{\Omega}=\{\Omega_k\}_{k=1}^K$  a partition of $[0,1]^p$ and by $\mX=\{\x_1,\dots,\x_n\}$ a collection of data points in $[0,1]^p$.  For an index set $\mS\subseteq\{1,\dots, p\}$, we define a {\sl diameter} of $\Omega_k$ w.r.t. $\mS$ as 
$$
\mathrm{diam}\left(\Omega_k;\mS\right)=\max\limits_{\substack{\x,\y\in\Omega_k\cap\mX}}\|\x_\mS- \y_\mS\|_2.
$$
and with
$
\mathrm{diam}\left(\bm{\Omega};\mS\right)=\sqrt{\sum_{k=1}^K\mu(\Omega_k)\mathrm{diam}^2\left(\Omega_k;\mS\right)}
$
we define a {\sl  diameter} of the entire partition $\BOmega$  w.r.t. $\mS$.
\end{definition}

The diameter of $\Omega_k$ corresponds to the largest $||\cdot||_2$ distance between $\mS$-coordinate projections of two points that fall inside $\Omega_k$.  
This is one of the more flexible notions of a cell diameter, which  takes into account the data itself, not just the physical cell size. Traditionally,  bias and convergence rate analysis of  tree-based estimators have been characterized
in terms of the cell diameters. The rate depends on how fast the diameters shrink as we move down the tree: the more rapidly, the better. As will be seen later in Section \ref{sec:step_functions}, controlling the diameter will be essential for obtaining tight bounds on the approximation error.



\subsection{Tree Partitions}
We  are ultimately interested in partitions that can be obtained with {\sl nested} axis-parallel splits. {Such partitions can be represented by a tree diagram, a hierarchical arrangement of nodes. We will focus on binary tree partitions, where each split yields two children nodes. Namely, starting from a parent node $[0,1]^p$, a binary tree partition is grown by successively applying a  splitting rule on a chosen internal node, say {${\Omega}_k^\star\subset[0,1]^p$.} 
The  node ${\Omega}_k^\star$ is split into two cells by a perpendicular bisection  along one of the $p$ coordinates, say $j$, at a chosen observed value $\tau\in\mX=\{\x_1,\dots,\x_n\}$. The newborn cells $\{\x\in{\Omega}_k^\star:x_j\leq \tau\}$ and $\{\x\in{\Omega}_k^\star:x_j>\tau\}$ constitute two rectangular regions of $[0,1]^p$, which can be split further (to become internal nodes) or end up being  terminal nodes . The terminal cells after $K-1$ splits then yield a box-shaped (tree) partition $\{\Omega_k\}_{k=1}^K$.}

Unlike {\sl dyadic} trees, where the threshold $\tau$ is preset at a midpoint of the rectangle ${\Omega}_k$ along the $j^{th}$ direction, we allow for splits
at available observations $\mX$. {Such data-dependent splits are integral to Bayesian CART and BART implementations \citep{cart1,cart2,bart}.} With more opportunities {for each split}, many more tree topologies can be generated. However,  since the tree partitions are arranged in a nested fashion, their combinatorial complexity is not too large (as shown in Lemma \ref{lemma:complexity} below).  

We will denote each tree-structured $K$-partition by $\mT^K=\{\Omega_k\}_{k=1}^K$.
 With $\mV^K_\mS$ we denote a {\sl family of valid tree partitions} of  $[0,1]^p$, obtained by splitting $K-1$ times at observed values in  $\mX$ along each coordinate direction inside $\mS\subseteq\{1,\dots,p\}$ at least once. In particular, each tree $\mT\in\mV^K_\mS$ is valid according to Definition \ref{ass:balance} and uses up all covariates in $\mS$ for splits, where $\mS$ can be regarded as an index set of active predictors. We will refer to the partitioning number $\Delta(\mV^K_\mS)$ (in a similar vein as in \cite{nobel}) as the overall number of distinct partitions  of $\mX$ that can be induced by  members of the partitioning family $\mV^K_\mS$.

{
\begin{lemma}\label{lemma:complexity}
Denote by $\mS\subseteq\{1,\dots,p\}$ an index set of active covariates. 
Let $\mV^K_\mS$ denote the set of valid tree partitions {obtained with $K-1$ splits}. Then
$$
\Delta(\mV^K_\mS) = \frac{ |\mS|^{K-1}n!}{(n-K+1)!}.
$$
\end{lemma}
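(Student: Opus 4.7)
The plan is to upper bound $\Delta(\mV^K_\mS)$ by representing each tree partition in $\mV^K_\mS$ by a canonically ordered sequence of $K-1$ split descriptions and then counting those sequences. First I would fix a canonical traversal of the internal nodes of the underlying tree---say depth-first, left-child first---so that each $\mT\in\mV^K_\mS$ determines a unique ordered tuple $((j_1,\tau_1),\dots,(j_{K-1},\tau_{K-1}))\in(\mS\times\mX)^{K-1}$, where $(j_k,\tau_k)$ is the coordinate-threshold pair at the $k$-th internal node visited. Since the canonical tuple uniquely recovers the tree and every partition of $\mX$ in $\mV^K_\mS$ is induced by at least one tree, the number of distinct partitions is bounded by the number of realizable tuples.

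For the count, the coordinates contribute a factor of $|\mS|^{K-1}$ without further restriction. For the thresholds I would grow $\mT$ in the canonical order just described and invoke Definition~\ref{ass:balance}. After $k-1$ canonical splits, the partial tree has $k$ disjoint leaf cells that cover $[0,1]^p$; each such partial leaf is a union of final leaves of $\mT$, and each final leaf contains at least one observation by validity (\ref{eq:balance}), so every partial leaf is nonempty. Consequently the cell $\Omega^\star_k$ about to be split at step $k$ contains at most $n-(k-1)=n-k+1$ observations, and $\tau_k$ must be one of them---otherwise one of the two resulting children is empty, contradicting validity of $\mT$. This gives at most $n-k+1$ choices for $\tau_k$; taking the product over $k=1,\dots,K-1$ yields $\prod_{k=1}^{K-1}(n-k+1)=n(n-1)\cdots(n-K+2)=n!/(n-K+1)!$ threshold sequences, and combining with the coordinate count produces the claimed expression.

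The main obstacle is the step-by-step cell-size accounting, because validity is a property of the \emph{final} partition whereas the argument needs nonemptiness of every intermediate leaf generated along the canonical traversal. The nesting observation---intermediate leaves are disjoint unions of final leaves---resolves this in one line, after which the bound $n-k+1$ falls out of straightforward bookkeeping. A minor notational subtlety worth flagging is that several observations may coincide in their $j_k$-th coordinate and therefore induce the same split along $j_k$; identifying thresholds with observations rather than with distinct coordinate values can only over-count, which leaves the upper bound intact.
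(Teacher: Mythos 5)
Your argument is correct and is essentially the paper's proof unrolled: the paper establishes the recursion $\Delta(\mV^K_\mS)=\Delta(\mV^{K-1}_\mS)(n-K+2)|\mS|$ by noting that a tree with $K-1$ nonempty cells admits at most $n-K+2$ further split points along any one of $|\mS|$ coordinates, and your step-$k$ bound of $n-k+1$ threshold choices is exactly this recursion written out as a product. The only cosmetic difference is that you make the encoding and the injectivity of the tree-to-tuple map explicit and frame the result as an upper bound --- which is arguably the more honest reading, since the counting over-counts, and an upper bound is all that is used downstream in the entropy calculation.
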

\vspace{-0.5cm}
\begin{proof}
 This follows from the recursive formula
 $
 \Delta(\mV^K_\mS)=\Delta(\mV^{K-1}_\mS)(n-K+2) |\mS|
 $, where we use the fact that there are $\Delta(\mV^{K-1}_\mS)$ possible trees with $K-1$ cells which have altogether $n-K+2$ potential next splits along one of the $|\mS|$ coordinates. 
\end{proof}}
Lemma \ref{lemma:complexity} will be fundamental for understanding the  combinatorial richness of trees  and  for obtaining bounds on the  covering numbers towards establishing Condition \eqref{eq:entropy}.

\begin{remark}(The $k$-$d$ tree partition)\label{remark:kd}
We now pause to revisit one of the most popular space partitioning structures, the $k$-$d$ {tree partition} \cite{bentley1975}. 
Such a partition $\wh{\mT}$ is constructed by cycling over coordinate directions in $\mS$, where all nodes at the same level are split along the same axis. For a given direction $j\in\mS$, each  internal node, say $\Omega_k^\star$, will be split  at a median of the point set (along the $j^{th}$ axis). This split will pass $\lfloor \mu(\Omega_k^\star) n/2\rfloor$ and $\lceil \mu(\Omega_k^\star) n/2\rceil$ observations onto its two children, thereby roughly halving the number of points. 
After $s$ rounds of splits on each variable, all  $K$ terminal nodes have at least $\lfloor n/K \rfloor$ observations, where $K=2^{s\, |\mS|}$. The $k$-$d$ tree partitions are thus balanced in light of Definition 2.4 of van der Pas and Ro\v{c}kov\'{a} \cite{vdp_rockova}.
{While $k$-$d$ trees are not adaptive to intrinsic dimensionality of data (in comparison with  more flexible partitions such as random projections trees \citep{verma2009})}, the diameters of  $k$-$d$ tree partitions can shrink fast, as long as the number of directions is not too large. In particular, 
$\mathrm{diam}(\wh{\mT};\mS)\leq  \sqrt{|\mS|}/K^{1/(2\,|\mS|)} $ for $K=2^{s\,|\mS|}$ for some $s\geq 1$ according to Proposition 6 of \cite{verma2009}. The $k$-$d$ tree construction will be instrumental in establishing Condition \eqref{eq:prior}. 
 \end{remark}

\subsection{On Tree-structured Step Functions}\label{sec:step_functions}
 The second critical ingredient in building a tree regressor is learning the piecewise function on a given partition.
 In this section, we describe some facts about the approximating properties of  such  tree-structured step functions {(further referred to as trees)}.
 The understanding of how well we can approximate a smooth regression surface  will be vital for establishing Condition
  \eqref{eq:prior}. 
  
For a family of {\sl valid} tree partitions $\mV^K_{\mS}$, we denote by
\begin{align}\label{class:stepf}
\mathcal{F}(\mV^K_{\mS}) &= \left\{f_{\mT,\b}: [0,1]^p \to \R ; f_{\mT,\b}(\x) = \sum_{k=1}^K \beta_k\,\1_{\Omega_k}(\x);\mT\in\mV^K_{\mS},\b\in\R^{K}\right\}
\end{align}
the set of all step functions supported by members of the tree partitioning family $\mV_\mS^K$. Each function is underpinned by a valid tree partition $\mT=\{\Omega_k\}_{k=1}^K\in\mV_\mS^K$ and a vector of step heights $\b=(\beta_1,\dots,\beta_K)'\in\R^K$.


The cell diameters oversee how closely one can approximate $f \in \Ha_{p}\,\cap\,\mC(\mS)$ with  $f_{\mT,\b}\in \mathcal{F}(\mV^K_{\mS})$ {and it is desirable that they decay fast with $K$.
Ideally, the approximation error should be no slower than $q^\gamma/K^{1/q}$ for some $\gamma>0$, where $q=|\mS|$.
To  get an intuitive insight into this requirement, consider the following perfectly regular partition: a $q$-dimensional ``chess-board" that splits $[0,1]^{q}$ into $K=s^q$ cubes of length $1/s=1/K^{1/q}$. The maximal interpoint distance inside each cube will be at most $\sqrt{q}/K^{1/q}$. This partition is, however, not adaptive and thereby less practical. It turns out that, under a mild requirement on the spread of the data points $\mX$,  the fast diameter decay is {\sl also} guaranteed by the data-adaptive $k$-$d$ trees mentioned in Remark \ref{remark:kd}. The ``mild requirement" is formalized in our definition below.}

\begin{definition}\label{def:regular}
Denote by $\wh{\mT}=\{\wh{\Omega}_k\}_{k=1}^K\in\mV_{\mS}^K$ the $k$-$d$ tree where  $\mS\subseteq\{1,\dots,p\}$ and $K=2^{s\,|\mS|}$. We say that a dataset $\mX$ is $(M,\mS)$-regular if  
\begin{equation}\label{eq:design_regular}
\max\limits_{1\leq k\leq K}\mathrm{diam}(\wh{\Omega}_k;\mS)<M\, \sum_{k=1}^K\mu(\wh{\Omega}_k)\mathrm{diam}(\wh{\Omega}_k;\mS)
\end{equation}
for some large enough constant $M>0$ and all  $s\in\N\backslash\{0\}$.
\end{definition}
The definition states  that in a regular dataset,  the maximal diameter  in the $k$-$d$ tree partition should not be much larger than a ``typical" diameter.  This condition ensures that, as more and more data points are collected, the data conform to a structure  that does not permit outliers in active directions $\mS$. 
{For example, a fixed design on a regular grid (including directions $\mS$) will satisfy \eqref{eq:design_regular}. Our notion of regularity goes farther by allowing (a) the predictors to  be correlated, (b)
 the points to scatter unevenly and/or close to a lower-dimensional manifold. The manifold, however,  should be  varying  in active directions $\mS$ and do so sufficiently {\sl smoothly} (or be monotone) so that the cells in the $k$-$d$ tree do not contain isolated clouds of points. For example, data points arising from distributions with atomic marginals (in active directions) would violate regularity.}

{As will be seen in the following lemma, for regular datasets, $k$-$d$ trees have a faster diameter decay (roughly halving the cell diameters after one round of $q=|\mS|$ splits), thereby attaining better approximation error. }
The following lemma is an important element of our proof skeleton, showing that there {\sl exists} a tree (a $k$-$d$ tree) that approximates well. 

\begin{lemma}\label{lemma:approx}
Assume $f \in \Ha_{p}\,\cap\,\mC(\mS)$, where $|\mS|=q$ { and $\alpha \leq 1$}, and that $\mX$ is $(M,\mS)$-regular. Then there exists  a tree-structured step function $f_{\wh{\mT},\,\wh{\b}}\in\mV_{\mS}^K$ for some $K=2^{s\,q}$ with $s\in\N\backslash\{0\}$ such that 
\begin{equation}\label{approx2}
\|f - f_{\wh{\mT},\,\wh{\b}}\|_n \leq   ||f||_{\Ha}C\, M^\alpha\, q/K^{\alpha/q}
\end{equation}
for some $C>0$.
\end{lemma}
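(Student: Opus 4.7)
The plan is to construct the claimed approximant explicitly as a $k$-$d$ tree piecewise constant function, then bound the $L^2$-approximation error cell-by-cell using H\"{o}lderness, and finally invoke the regularity hypothesis together with the diameter decay of $k$-$d$ trees quoted in Remark \ref{remark:kd}. Concretely, first I would take $\widehat{\mathcal{T}} = \{\widehat{\Omega}_k\}_{k=1}^K$ to be the $k$-$d$ tree partition from Remark \ref{remark:kd}, grown by cycling through the $q$ active coordinates in $\mathcal{S}$ only (no splits in inactive directions are needed because $f\in\mathcal{C}(\mathcal{S})$), with $K = 2^{sq}$ for an $s\in\mathbb{N}\setminus\{0\}$ to be chosen later. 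Since the $k$-$d$ tree splits at data medians, $\widehat{\mathcal{T}}$ is valid (in fact balanced in the sense of Definition \ref{ass:balance}), so $\widehat{\mathcal{T}}\in\mV_{\mathcal{S}}^{K}$.

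Second, I would pick the step heights by setting $\widehat{\beta}_k = f(\x^{(k)})$ for an arbitrary representative $\x^{(k)}\in\widehat{\Omega}_k\cap\mathcal{X}$ (the cell-average $\widehat{\beta}_k = \frac{1}{n\,\mu(\widehat{\Omega}_k)}\sum_{\x_i\in\widehat{\Omega}_k} f(\x_i)$ would work equally well). For any data point $\x_i\in\widehat{\Omega}_k$, H\"{o}lder continuity combined with $f\in\mathcal{C}(\mathcal{S})$ gives
\begin{equation*}
|f(\x_i)-\widehat{\beta}_k| \;\le\; \|f\|_{\mathcal{H}^\alpha}\,\|\x_{i,\mathcal{S}}-\x^{(k)}_{\mathcal{S}}\|_2^{\alpha} \;\le\; \|f\|_{\mathcal{H}^\alpha}\,\mathrm{diam}(\widehat{\Omega}_k;\mathcal{S})^{\alpha}.
\end{equation*}
Summing squared errors over cells and using the weighting by $\mu(\widehat{\Omega}_k)$ yields
\begin{equation*}
\|f-f_{\widehat{\mathcal{T}},\,\widehat{\b}}\|_n^2 \;\le\; \|f\|_{\mathcal{H}^\alpha}^2\sum_{k=1}^K \mu(\widehat{\Omega}_k)\,\mathrm{diam}(\widehat{\Omega}_k;\mathcal{S})^{2\alpha}.
\end{equation*}

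Third, I would convert the right-hand side into a statement about the overall partition diameter. Since $2\alpha\le 2$, I crudely bound each $\mathrm{diam}(\widehat{\Omega}_k;\mathcal{S})^{2\alpha}$ by $\bigl(\max_{k}\mathrm{diam}(\widehat{\Omega}_k;\mathcal{S})\bigr)^{2\alpha}$; the $(M,\mathcal{S})$-regularity in Definition \ref{def:regular} then controls the max by the $\mu$-average, and one further Cauchy--Schwarz step (over the probability measure $\mu$) bounds the $\mu$-average by the root-mean-square diameter, so
\begin{equation*}
\max_{1\le k\le K}\mathrm{diam}(\widehat{\Omega}_k;\mathcal{S}) \;\le\; M\sum_{k=1}^K \mu(\widehat{\Omega}_k)\,\mathrm{diam}(\widehat{\Omega}_k;\mathcal{S}) \;\le\; M\,\mathrm{diam}(\widehat{\mathcal{T}};\mathcal{S}).
\end{equation*}
Finally, inserting the Verma--Kpotufe--Dasgupta $k$-$d$ tree diameter bound $\mathrm{diam}(\widehat{\mathcal{T}};\mathcal{S})\le\sqrt{q}/K^{1/(2q)}$ cited in Remark \ref{remark:kd} (valid for $K=2^{sq}$) and collecting constants produces a bound of the advertised form $\|f\|_{\mathcal{H}^\alpha}\,C\,M^{\alpha}\,q/K^{\alpha/q}$, up to absorbing a benign polynomial factor in $q$ into the constant $C$ (possibly sharpening the application of Verma et al.\ or replacing the max by the exact sum in Step~3 for a tighter polynomial-in-$q$ prefactor).

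\paragraph{Main obstacle.}
The non-trivial part is Step 3: without regularity, a $k$-$d$ tree cell can have diameter much larger than the typical cell (e.g.\ one isolated outlier along an active direction inflates $\max_k\mathrm{diam}$ while leaving $\mathrm{diam}(\widehat{\mathcal{T}};\mathcal{S})$ small). Passing from the weighted sum $\sum_k \mu(\widehat{\Omega}_k)\mathrm{diam}_k^{2\alpha}$ to a clean power of the overall partition diameter really requires Definition \ref{def:regular}, and one must verify that the inequality goes through uniformly in $s$ (so $s$ is at our disposal and $K=2^{sq}$ can be made as small as needed to satisfy the downstream prior-concentration requirement). Everything else is routine once the $k$-$d$ construction is fixed and the H\"{o}lder bound is applied on each cell.
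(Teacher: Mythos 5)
Your Steps 1 and 2 match the paper's proof: the paper also uses the $k$-$d$ tree on the active coordinates, sets $\widehat\beta_k$ to be the cell-average of $f$ on $\widehat\Omega_k\cap\mathcal{X}$, and applies H\"older continuity cellwise to reach $\|f-f_{\widehat{\mathcal{T}},\widehat{\b}}\|_n \le \|f\|_{\Ha}\sqrt{\sum_k \mu(\widehat\Omega_k)\,\mathrm{diam}^{2\alpha}(\widehat\Omega_k;\mS)}$. The divergence, and the genuine gap, is in your Step 3.

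After invoking regularity (which is used the same way as in the paper), you pass through Cauchy--Schwarz to the partition $L_2$-diameter, $\sum_k \mu(\widehat\Omega_k)\mathrm{diam}(\widehat\Omega_k;\mS)\le \mathrm{diam}(\widehat{\mathcal{T}};\mS)$, and then plug in the $L_2$ bound from Remark~\ref{remark:kd}, $\mathrm{diam}(\widehat{\mathcal{T}};\mS)\le \sqrt{q}\,K^{-1/(2q)}$. Chasing the exponents, this produces $\|f-f_{\widehat{\mathcal{T}},\widehat{\b}}\|_n \lesssim \|f\|_{\Ha}\,M^{\alpha}\,q^{\alpha/2}\,K^{-\alpha/(2q)}$, \emph{not} the claimed $\|f\|_{\Ha}\,C\,M^{\alpha}\,q\,K^{-\alpha/q}$. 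The discrepancy is not a ``benign polynomial-in-$q$ prefactor'' as you suggest: you have lost a factor of $2$ in the exponent of $K$, i.e.\ an extra $K^{\alpha/(2q)}$ which grows with $K$. This matters downstream: to bring the approximation error below $\varepsilon_n$, your bound forces $K\gtrsim \varepsilon_n^{-2q_0/\alpha}$ rather than $\varepsilon_n^{-q_0/\alpha}$, which (for the target rate $\varepsilon_n\asymp n^{-\alpha/(2\alpha+q_0)}\log^{1/2}n$) gives a tree size $K$ of order $n^{2q_0/(2\alpha+q_0)}$, and then $K\log K$ overshoots the prior-mass budget $n\varepsilon_n^2\asymp n^{q_0/(2\alpha+q_0)}\log n$, so Condition~\eqref{eq:prior} fails.

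What the paper actually does in the critical step is different from anything in Remark~\ref{remark:kd}: it uses regularity to bound $\max_k\mathrm{diam}(\widehat\Omega_k;\mS)\le M\sum_k\mu(\widehat\Omega_k)\mathrm{diam}(\widehat\Omega_k;\mS)$ and then applies a \emph{separately established} $L_1$-type diameter decay $\sum_k\mu(\widehat\Omega_k)\mathrm{diam}(\widehat\Omega_k;\mS)\le q/K^{1/q}$, attributed to ``a minor modification of the proof of Proposition~6 of Verma et al.''\ This is genuinely stronger than what Jensen or Cauchy--Schwarz can extract from the $L_2$ bound quoted in the remark: the exponent is $1/q$ rather than $1/(2q)$. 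Your Cauchy--Schwarz detour is therefore both unnecessary and costly. To fix the proof you would need to drop Cauchy--Schwarz, stop at $\max_k\mathrm{diam}\le M\sum_k\mu_k\mathrm{diam}_k$, and then either reproduce the argument of Verma et al.\ adapted to the $L_1$ (rather than squared-diameter) recursion, or explicitly establish that the $k$-$d$ tree cells have sides decaying like $K^{-1/q}$ under the regularity hypothesis, which is what the ``minor modification'' is supplying.
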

\begin{proof}
Supplemental Material (Section 3).
\end{proof}

Controlling the approximation error is only one of the critical aspects in our theoretical study. The second will be monitoring the complexity of our approximating function classes. Finding the right balance between the two will be instrumental for obtaining optimal performance.

Now that we have developed the necessary tools, we can dive into the main results of the paper.

\section{Adaptive Dimension Reduction with Trees}\label{sec:singletree}
In Regime 1, we assume  that the target regression surface $f_0\in\Ha_p\cap\mC(\mS_0)$, although initially conceived as a function of $\x=(x_1,\dots,x_p)'$, in fact depends only on a small fraction of $q_0$ features $\x_{\mS_0}$, where $\mS_0\subset\{1,\dots, p\}$.  If an oracle were able to isolate $\mS_0$, the {$L_2$ minimax rate} would improve from  $n^{-\alpha/(2\alpha+p)}$ to $n^{-\alpha/(2\alpha+q_0)}$ and it would be the fastest  rate possible. 
When no such oracle information is available,  \cite{yang} characterized the minimax rate as follows:  {$\varepsilon_n^2= \lambda^2\left(\sqrt{n}\lambda\right)^{-4\alpha/(2\alpha+q_0)}+\frac{q_0}{n}\log\left(\frac{p}{q_0}\right)$
where $\lambda$ is the isotropic H\"{o}lder norm.} The first term is the classical minimax risk for estimating a $q_0$-dimensional smooth function.  The second term is the penalty incurred by  variable selection uncertainty. 
While the number of features $p$ is not forbidden from growing to infinity much faster than $n$, {we keep in mind that consistent estimation will only be possible  in sparse contexts where $q_0$ is small relative to $p$  and $n$ (in which case the complexity penalty will be dominated by the first term in the minimax rate).}

\subsection{Spike-and-Tree Priors}\label{sec:spike_tree}
{Bayesian regression tree
implementations that do not induce sparsity (when in fact present)
are unfit for inference in  high-dimensional setups.}
In particular, the traditional Bayesian CART prior \cite{cart1} grows trees by splitting each node, indexed by  the depth level $d$, with a probability $g(d)=\gamma/(1+d)^\alpha$, where $\alpha>1,\gamma\in (0,1)$ are tuning parameters. The splitting variable  is picked from  $\{x_1,\dots,x_p\}$ {\sl uniformly} (i.e. with a fixed probability $\theta_i=1/p$). Recently, \cite{linero2016} proposed an adaptive variant of this prior by placing a sparsity-inducing Dirichlet prior on the splitting proportions $\theta_i$.
This prior uses fewer variables in the tree construction and thereby is more reluctant to overfit. In another popular Bayesian CART implementation, \cite{cart2} suggest directly placing a prior on $K$ and a conditionally uniform prior on  tree topologies with $K$ bottom leaves. Again, in its original form, this prior will likely suffer from the curse of dimensionality, failing to harvest the intrinsic lower-dimensional structure. Here, we propose a fix to this problem. To make the Bayesian CART prior of \cite{cart2} appropriate for high-dimensional setups, we  propose a {\sl spike-and-tree} variant by injecting one more layer: a complexity prior over the active set of predictors.



Bayesian models for feature selection have traditionally involved a hierarchy of priors over subset sizes $q=|\mS|$ and subsets $\mS\subseteq\{1,\dots,p\}$ \cite{castillo_vdv, castillo_vdv2}.
{Instead of} modeling the mean outcome as a linear functional of active predictors $\x_{\mS}$, here we grow a tree from $\x_{\mS}$.
We begin by treating $q_{0}$ as unknown with an exponentially decaying prior  \cite{castillo_vdv}
\begin{equation}\label{prior:dim}
\pi(q)\,\propto\, c^{-q}\,p^{-a\, q},\, q=0,1,\dots, p, \quad\text{ for some} \quad a,c>0. \tag{T1}
\end{equation}
Next, given the dimensionality $q$, we assume that all  $p\choose q$ subsets $\mS$ of  $q=|\mS|$  covariates  are a-priori equally likely, i.e.
\begin{equation}\label{prior:subset}
\quad\quad\pi(\mS\C q)=1/{p\choose q}.\tag{T2}
\end{equation}
Given $q$ and the feature set $\mathcal{S}$, a tree is grown by splitting  at least once on every coordinate inside $\mS$, all the way down to  $K$ terminal nodes. If we knew $\alpha$ and $q_0$, the optimal choice of $K$ would be $K\asymp n^{q_0/(2\alpha+q_0)}$, for which the actual minimax rate could be achieved. For the more practical case when $\alpha$ and $q_0$  are both unknown,  we shall assume that $K$ arrived from a prior 
$\pi(k\C q)$.  As noted by \cite{coram}, the tail behavior of $\pi(k\C q)$ is critical for controlling  the vulnerability/resilience to overfitting. 
We consider the Poisson distribution (constrained to $\N\backslash\{0\}$), as suggested by \cite{cart2} in their Bayesian CART implementation. Namely, for $k\in \N\backslash\{0\}$ we have
\begin{equation}\label{prior:K}
\pi(K)\,=\, \frac{\lambda^{K}}{(\e^\lambda-1)K!},\,K=1,2,\dots,\quad\text{for some}\quad \lambda\in\R.\tag{T3}
\end{equation}
 For its practical implementation, one would truncate its support to 
 the maximum number of splits that can be made with $n$ observations. 
  When $\lambda$ is small,  \eqref{prior:K} is concentrated on models with smaller complexity where overfitting does not occur. Increasing $\lambda$ leads to smearing the prior mass over partitions with more jumps.
Similar complexity priors with an exponential decay $\e^{-C\, k\log k}$ have been deployed previously in nonparametric problems \cite{lian2007,coram,liu2015,cart2,vdp_rockova}. 

Given $q,\mathcal{S}$ and $K$, we assign a uniform prior over {\sl valid} tree topologies $\mT=\{\Omega_k\}_{k=1}^K\in\mV^K_\mS$, i.e.
\begin{equation}\label{prior:partition}
\pi(\mT\C \mathcal{S}, K)=\frac{1}{\Delta(\mV_\mS^K)}\1\left(\mT\in\mV^K_\mS\right)\tag{T4}.
\end{equation}
Similar constraints on trees where each terminal node is assigned a minimal number of data points have been implemented in stochastic search algorithms \cite{cart2}.
{At the very least, we can choose $\bar{C}=1$ in  \eqref{eq:balance} }, merely requiring that the cells be non-empty.
Finally, given the partition of size $K$, we assign an iid Gaussian prior on the step heights (similarly as in \cite{cart1}) 
\begin{equation}\label{prior:beta2}
\pi(\b\C K)=\prod_{k=1}^K\phi(\beta_k;1)\tag{T5}.
\end{equation}

\begin{remark}
The name {\sl spike-and-tree} prior deserves a bit of explanation. It follows from the fact that \eqref{prior:dim} and \eqref{prior:subset} will be satisfied if each covariate has a prior probability $\theta\sim\mathcal{B}(1,p^u)$  of contributing to the mean regression surface  for some $u>1$ \cite{castillo_vdv}.  Endowing each covariate $x_j$ with a Bernoulli indicator $\gamma_j$, where   $\Pi(\gamma_j=1\C\theta)=\theta$, and building a tree on $\mS=\{j:\gamma_j=1\}$, one obtains a mixture prior on $f(\x)$ that pertains  to spike-and-slab variable selection. Here, the slab is a {\sl tree} prior built on active covariates rather than an independent product prior on active regression coefficients. This hierarchical construction has distinct advantages for variable selection. In linear regression, it is  customary  to select variables by thresholding marginal posterior inclusion probabilities $\Pi(\gamma_j=1\C\Y^{(n)})$. These will be available also under our {\sl spike-and-tree} construction.
Inspecting the posterior probabilities $\Pi(\gamma_j=1\C\Y^{(n)})$ obtained with our hierarchical tree prior will be a new avenue for conducting variable selection in Bayesian CART and BART, an alternative to \cite{kapelner2015}. Thus, our prior  has  important practical implications for performing principled model-free variable selection.
\end{remark}

\subsection{Posterior Concentration for Bayesian CART}\label{sec:post_conc_bcart}
 The difficulty in properly analyzing Bayesian CART stems from the {combinatorial richness} of the prior  that makes it  less tractable analytically.  By building on our developments from previous sections, we are now fully equipped to  present the first theoretical result concerning  this method.


The following theorem solidifies the optimality properties of Bayesian  CART  by showing that under the hierarchical prior \eqref{prior:dim}-\eqref{prior:beta2},  the posterior adapts to both smoothness and sparsity, concentrating at the (near) minimax rate that depends only on the number of strong covariates regardless of how many noise variables are present.  The near-minimaxity refers to an additional log factor. {The result holds for  sparse (high-dimensional) regimes,
where $p$ can be potentially much larger than $n$ and where $q_0\lesssim \log^{1/2} n$.}
We will denote by
$$
\mF_\mT= \bigcup_{q = 0}^{\infty} \bigcup_{K=1}^{\infty}  \bigcup_{\mS:|\mS|=q}\mathcal{F}(\mV^K_{\mS})
$$
the collection of all tree-structured step functions (with various tree sizes and split subsets) that can be obtained by partitioning $\mX$.

\begin{theorem}\label{thm:trees}
Assume $f_0\in \mathcal{H}^{\alpha}_{p}\cap \mathcal{C}(\mS_0)$  with { $0<\alpha\leq 1$} and $0<q_0=|\mS_0|$ such that
{$q_{0}\|f_0\|_{\Ha}\lesssim \log^{1/2} n$ and $\|f_0\|_\infty\lesssim\log^{1/2} n$}. Moreover, we assume that $\log p\lesssim n^{q_{0}/(2\alpha+q_{0})}$ and that $\mX$ is $(M,\mS_0)$-regular. We endow $\mF_\mT$ with priors \eqref{prior:dim}-\eqref{prior:beta2}.  
Then with {$\varepsilon_n=n^{-\alpha/(2\alpha+q_{0})}\log^{1/2} n$} we have 
\begin{equation*}
\Pi\left( f \in \mathcal{F}_\mT : \|f_0 - f\|_n > M_n\,\varepsilon_n \mid \Y^{(n)}\right) \to 0,
\end{equation*}
 for any $M_n \to \infty$  in $P_0^n$-probability, as $n,p \to \infty$. 
\end{theorem}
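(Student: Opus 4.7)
The plan is to verify the three master conditions \eqref{eq:entropy}, \eqref{eq:prior} and \eqref{eq:remain} of Ghosal and Van der Vaart for a carefully chosen sieve and then invoke the general posterior concentration result stated at \eqref{eq:post_conc}. With $\varepsilon_n = n^{-\alpha/(2\alpha+q_0)}\log^{1/2} n$, I would fix threshold parameters $K_n \asymp n \varepsilon_n^2/\log n$, $q_n \asymp n\varepsilon_n^2/\log p$ and $B_n \asymp \sqrt{n\varepsilon_n^2}$, and define the sieve
\begin{equation*}
\mathcal{F}_n \;=\; \bigcup_{q\leq q_n}\;\bigcup_{K\leq K_n}\;\bigcup_{|\mathcal{S}|=q}\;\bigl\{ f_{\mathcal{T},\bm{\beta}}\in\mathcal{F}(\mathcal{V}_\mathcal{S}^K):\;\|\bm{\beta}\|_\infty\leq B_n\bigr\}.
\end{equation*}
The three pieces of the proof are then (a) a combinatorial/covering bound for the sieve, (b) a prior-mass lower bound built around a $k$-$d$ tree, and (c) tail bounds for the prior on $q$, $K$ and $\bm{\beta}$.

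For the prior mass condition \eqref{eq:prior}, I would invoke the approximation Lemma~\ref{lemma:approx}: since $\mathcal{X}$ is $(M,\mathcal{S}_0)$-regular and $f_0\in\mathcal{H}^\alpha_p\cap\mathcal{C}(\mathcal{S}_0)$, there is a $k$-$d$ tree $\widehat{\mathcal{T}}\in\mathcal{V}_{\mathcal{S}_0}^{K^*}$ with $K^*\asymp n^{q_0/(2\alpha+q_0)}$ and heights $\widehat{\bm{\beta}}$ with $\|f_0-f_{\widehat{\mathcal{T}},\widehat{\bm{\beta}}}\|_n\lesssim \varepsilon_n$. I would then lower-bound
\begin{equation*}
\Pi(\|f-f_0\|_n\leq \varepsilon_n) \;\geq\; \pi(q_0)\,\pi(\mathcal{S}_0\mid q_0)\,\pi(K^*\mid q_0)\,\pi(\widehat{\mathcal{T}}\mid \mathcal{S}_0,K^*)\,\pi\bigl(\|\bm{\beta}-\widehat{\bm{\beta}}\|_n\leq \varepsilon_n/2\bigr)
\end{equation*}
and take logs. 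The dominant contributions are $-q_0\log p$ from \eqref{prior:dim}--\eqref{prior:subset}, $-K^*\log K^*$ from the Poisson prior \eqref{prior:K}, $-K^*\log n$ from $\log\Delta(\mathcal{V}_{\mathcal{S}_0}^{K^*})$ via Lemma~\ref{lemma:complexity}, and $-K^*\log(1/\varepsilon_n)$ plus $-\|\widehat{\bm{\beta}}\|_2^2/2$ from the Gaussian prior \eqref{prior:beta2}; here the $\|f_0\|_\infty\lesssim\log^{1/2}n$ and $\log p\lesssim n^{q_0/(2\alpha+q_0)}$ hypotheses ensure each term is $\lesssim n\varepsilon_n^2$.

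For the entropy condition \eqref{eq:entropy}, I would cover $\mathcal{F}_n$ by combining the combinatorial count of tree structures from Lemma~\ref{lemma:complexity} (which gives $\binom{p}{q}\,q^{K-1}n!/(n-K+1)!$ for each $(q,K)$) with an $(\varepsilon/36)$-grid on $[-B_n,B_n]^K$ in $\|\cdot\|_n$. Taking logarithms yields
\begin{equation*}
\log N \;\lesssim\; q_n\log p \;+\; K_n\log n \;+\; K_n\log(B_n/\varepsilon_n),
\end{equation*}
and each term is $\lesssim n\varepsilon_n^2$ by our choice of $q_n,K_n,B_n$. For the remaining mass condition \eqref{eq:remain}, the exponential tail of \eqref{prior:dim}, the Poisson tail of \eqref{prior:K} and the Gaussian tail on $\|\bm{\beta}\|_\infty$ each decay faster than $e^{-(d+2)n\varepsilon_n^2}$ provided $q_n,K_n\gtrsim n\varepsilon_n^2$ and $B_n^2\gtrsim n\varepsilon_n^2$, which the above calibrations supply.

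The main obstacle will be the prior-mass condition: unlike in Gaussian process or wavelet priors, the tree prior puts mass on a discrete, combinatorially rich object, so one cannot use a small-ball inequality but must instead \emph{exhibit} a good configuration (here, the $k$-$d$ tree of Remark~\ref{remark:kd}) and simultaneously control \emph{every} factor in the hierarchy (T1)--(T5). Key technical care is needed in (i) converting the $(M,\mathcal{S}_0)$-regularity into the diameter bound driving Lemma~\ref{lemma:approx}, (ii) translating the $L_\infty$ control on $\widehat{\bm{\beta}}$ into a Gaussian small-ball probability of the right order $e^{-c K^*\log(1/\varepsilon_n)}$, and (iii) matching these log losses against $n\varepsilon_n^2 = n^{q_0/(2\alpha+q_0)}\log n$, which is exactly where the extra $\log^{1/2} n$ factor in the rate is absorbed. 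Once all three conditions are verified, the conclusion \eqref{eq:post_conc} follows directly from Theorem~4 of \cite{ghosal_vdv}.
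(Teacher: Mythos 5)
Your proposal is correct and follows the same architecture as the paper's proof: the same sieve over $(q,K,\mathcal{S})$, the same use of Lemma~\ref{lemma:complexity} to count tree topologies, the same exhibition of an approximating $k$-$d$ tree via Lemma~\ref{lemma:approx} to get the prior-mass lower bound, and the same matching of log-losses against $n\varepsilon_n^2$. There is one genuine structural deviation worth noting: you truncate the sieve at $\|\bm{\beta}\|_\infty \leq B_n$ and then pay for it with a Gaussian tail bound in condition~\eqref{eq:remain}, whereas the paper's sieve $\mathcal{F}_\mT^n$ carries \emph{no} restriction on the step heights. The paper avoids the truncation by exploiting the \emph{local} form of the entropy condition~\eqref{eq:entropy}: Lemma~\ref{lemma:cover} shows that, for a fixed valid tree $\mathcal{T}$, the minimum-leaf-size relation $\frac{\bar{C}^2}{n}\|\b_1-\b_2\|_2^2\leq \|f_{\mathcal{T},\b_1}-f_{\mathcal{T},\b_2}\|_n^2$ forces the coefficient vectors of the $\varepsilon$-ball $\{f:\|f-f_0\|_n<\varepsilon\}$ into an Euclidean ball of radius $\varepsilon\sqrt{n}/\bar{C}$, so the local $\varepsilon/36$-covering number is automatically $(108\sqrt{n}/\bar{C})^K$ without any a priori bound on $\|\bm{\beta}\|$. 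Your route is slightly longer — it costs an extra $K\log\sqrt{K}$ in the entropy bound (harmless) and an extra tail term in~\eqref{eq:remain} — but it is sound and perhaps more transparent if one is unfamiliar with the local-entropy trick. Both arrive at the same rate, and both correctly identify the constraints $q_0\|f_0\|_{\mathcal{H}^\alpha}\lesssim\log^{1/2}n$, $\|f_0\|_\infty\lesssim\log^{1/2}n$ and $\log p\lesssim n^{q_0/(2\alpha+q_0)}$ as the inputs that make $a_n\lesssim n^{q_0/(2\alpha+q_0)}$ and absorb the subset-selection and Gaussian-small-ball penalties into $n\varepsilon_n^2$.
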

\begin{proof}
Section \ref{proof:thm:trees}.
\end{proof}
\vspace{-0.5cm}
{
\begin{remark}\label{remark_beta}
It is useful to note that Theorem \ref{thm:trees} holds also when $p<n$ and when $q_0$ is fixed as $n\rightarrow\infty$. {When $q_0$ {is} fixed, however,  the assumptions $\|f_0\|_\infty\lesssim \log^{1/2}n$ and $q_0\|f_0\|_{\Ha}\lesssim \log^{1/2}n$ can be omitted. The first assumption is needed here to make sure that the step sizes of an approximating $k$-$d$ tree are well behaved when $q_0\rightarrow\infty$.} The result holds for a bit slower rate $\varepsilon_n= n^{-\alpha/(2\alpha+q_{0})}\log^{\beta} n$ with $\beta> 1/2$ under slightly relaxed assumptions
$q_{0}\|f_0\|_{\Ha} \lesssim \log^{\beta} n$ and $\|f_0\|_\infty\lesssim\log^{\beta} n$.
\end{remark}
}
The  assumption of a regular design is an inevitable consequence of treating $\x$'s as fixed. As noted by \cite{biau2016} in their study of random forests,  point-wise consistency results have been complicated by the difficulty in controlling  local (cell) diameters. The regularity assumption guarantees this control and is apt to be satisfied for most realizations of $\x$ from reasonable distributions on $[0,1]^p$. 
{The following Corollary certifies that Bayesian CART, under a suitable complexity prior on the number of terminal nodes, is reluctant to overfit. {This is seen} from the behavior of the posterior, which concentrates on values $K$ that are only a constant multiple  larger than the optimal oracle value $n^{q_0/(2\alpha+q_0)}$.}



\begin{corollary}\label{corollary1}(Bayesian regression trees do not overfit.)
Under the assumptions of Theorem \ref{thm:trees} {with $0<\alpha\leq 1$} we have
$$
\Pi\left(K> C_k n^{q_0/(2\alpha+q_0)} \mid \Y^{(n)}\right) \to 0
$$ 
in $P^n_0$-probability for a suitable constant $C_k>0$.
\end{corollary}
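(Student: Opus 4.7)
The plan is to invoke a standard prior-mass-versus-evidence argument. Define
$$A_n \;=\; \bigl\{ f \in \mathcal{F}_\mT : K > C_k\, n^{q_0/(2\alpha+q_0)} \bigr\}$$
for a constant $C_k>0$ to be chosen; we want to show $\Pi(A_n \mid \Y^{(n)}) \to 0$ in $P_{f_0}^n$-probability. The two ingredients are (a) an upper bound on the prior mass $\Pi(A_n)$ and (b) a lower bound on the evidence $\int (p_f^n / p_{f_0}^n)\,\d\Pi(f)$, which follows from the prior concentration bound \eqref{eq:prior} already established in the proof of Theorem \ref{thm:trees}.

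To get (a), note that since \eqref{prior:K} does not depend on $q,\mathcal{S},\mT,\b$, marginalization reduces $\Pi(A_n)$ to the tail $\sum_{K > K^*} \pi(K)$ where $K^* = C_k n^{q_0/(2\alpha+q_0)}$. Stirling's formula yields $\pi(K) \leq \exp\{-K \log K + K(1+\log\lambda) + O(\log K)\}$, so bounding the (geometric) tail gives
$$\Pi(A_n) \;\leq\; \exp\bigl\{ -c_1\, C_k\, n^{q_0/(2\alpha+q_0)} \log n \bigr\}$$
for some $c_1>0$ independent of $C_k$, using $\log K^* \gtrsim \log n$ (which holds uniformly since $q_0/(2\alpha+q_0) \geq 1/(2\alpha+1)$ for $q_0 \geq 1$). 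Because $n\varepsilon_n^2 \asymp n^{q_0/(2\alpha+q_0)} \log n$, choosing $C_k$ large enough gives $\Pi(A_n) \leq e^{-(d+2)\,n\varepsilon_n^2}$ for the $d>2$ used in \eqref{eq:prior}.

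For (b), the prior concentration from the proof of Theorem \ref{thm:trees}, combined with the standard evidence lower bound (e.g.\ Lemma 8.1 of Ghosal--van der Vaart, 2007), yields $\int (p_f^n / p_{f_0}^n)\,\d\Pi(f) \geq e^{-(d+1)\,n\varepsilon_n^2}$ on an event of $P_{f_0}^n$-probability tending to one. On the other hand, Fubini gives $E_{f_0}\!\int_{A_n}(p_f^n/p_{f_0}^n)\,\d\Pi(f) = \Pi(A_n) \leq e^{-(d+2)\,n\varepsilon_n^2}$, so Markov's inequality bounds $\int_{A_n}(p_f^n/p_{f_0}^n)\,\d\Pi(f) \leq e^{-(d+3/2)\,n\varepsilon_n^2}$ with probability $1-o(1)$. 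Taking the ratio gives $\Pi(A_n \mid \Y^{(n)}) \leq e^{-n\varepsilon_n^2/2} \to 0$, as required.

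The main obstacle is ensuring the Stirling tail bound produces a constant $c_1$ that is uniform over the admissible range $1 \leq q_0 \lesssim \log^{1/2} n$ (since Theorem \ref{thm:trees} allows $q_0$ to grow with $n$). This is handled by the observation that $q_0/(2\alpha+q_0)$ stays bounded below by $1/(2\alpha+1)$ for every $q_0 \geq 1$, so that $\log K^* \gtrsim \log n$ uniformly and a single choice of $C_k$ suffices across all admissible $q_0$.
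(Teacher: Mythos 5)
Your proof is correct and follows essentially the same route as the paper: the paper directly invokes Lemma 1 of Ghosal and van der Vaart (2007) after verifying $\Pi(K > C_k n^{q_0/(2\alpha+q_0)}) = o(\e^{-(d+2)n\varepsilon_n^2})$ via a Chernoff bound for the Poisson prior in Section~\ref{sec:proof:tree:eq:remain}, whereas you re-derive that lemma in-line (Fubini, Markov, evidence lower bound from~\eqref{eq:prior}) and bound the Poisson tail with Stirling rather than Chernoff --- two inconsequential packaging differences. Your explicit check that $\log K^* \gtrsim \log n$ uniformly over $1 \le q_0 \lesssim \log^{1/2} n$ (via $q_0/(2\alpha+q_0) \ge 1/(2\alpha+1)$) is a detail the paper leaves implicit but is indeed needed for a single choice of $C_k$.
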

\begin{proof}
This statement follows from Lemma 1 of \cite{ghosal_vdv} and holds upon the satisfaction of the condition $\Pi(K>C_k n^{q_0/(2\alpha+q_0)})=o(\e^{-(d+2)\,n\varepsilon_n^2})$ for some suitably large $d>2$. This is shown in Section \ref{sec:proof:tree:eq:remain}.
\end{proof}

{Corollary 4.1 also reveals a fundamental  limitation of  trees (step functions) in recovering smoother functions.  To see this, consider $f_0(x) = x$ which possesses H\"{o}lder smoothness $1$ and, by Corollary 4.1,  will thus be approximated by trees with at most $n^{1/3}$ leaves (up to multiplicative constants) with high posterior probability. However, $f_0$ is also in $\mathcal{C}^2$ and the approximation error by a regular histogram with $n^{1/3}$ leaves will be at least of the order $n^{-1/3}$ which is too large to achieve the minimax rate of $n^{-5/2}$ over $\mathcal{C}^2$. }

\begin{remark}\label{remark:broad_class}(Bayesian CART \`{a} la Chipman, George and McCulloch \cite{cart1})
A closer look at the proof reveals that  Theorem \ref{thm:trees} also holds for priors $\pi(k\C q)$ such that $\e^{-c\, k\log{k}} \lesssim \pi(k\C q) \lesssim e^{-c\, k\log(n)^\eta}$ for some $0<\eta<1$. At the lower end are the complexity priors deployed in similar contexts \cite{scricciolo,vdp_rockova}. The Bayesian CART prior of \cite{cart1}, which is deployed in BART (and discussed in Section \ref{sec:spike_tree})  with $\alpha=0$ is at the upper end.
To see this, note that $\alpha=0$ corresponds to a homogeneous Galton-Watson (GW) process, where the number of terminal nodes $K$ satisfies
$\Pi(K>k)\leq \e^{-(k-0.5)\log[(1-\gamma)/\gamma]}$.
{With $\gamma=c/n$ for some $c<n$ we have $\Pi(K>k)\propto \e^{-c \, k\log n}.$ Going further,  the tail bound for the {\sl heterogeneous} GW process (with $\alpha\neq0)$ satisfies $\Pi(K>k)\leq\e^{-C_K k\log k}$ for some $C_K>0$ under a suitable modification of the split probability (i.e. $\pi(d)\propto a^{d}$ for some $1/n<a\leq 1/2$), as shown formally in Ro\v{c}kov\'{a} and Saha \cite{rockova_saha}.
They also show that the Bayesian CART posterior \`{a} la Chipman et al. \cite{cart1} concentrates at the optimal rate when $S_0$ is known. Endowed with the spike-and-slab wrapper, Theorem \ref{thm:trees} can be thus extended to the actual Bayesian CART prior deployed in BART.}


\end{remark}

\section{Tree Ensembles}\label{sec:treeensembles}
Combining multiple trees through additive aggregation has proved to be remarkably effective for enhancing prediction \cite{breiman2,bart}. This section offers new theoretical insights into the mechanics behind the Bayesian variants of such tree ensemble methods. Our approach rests on a detailed analysis of the {\sl collective behavior} of partitioning cells generated by individual trees. 
We will see that the overall performance is affected not only by the quality of single trees but also how well they can collaborate \cite{breiman_book}.


\subsection{Bayesian Additive Regression Trees}\label{sec:bart}
Additive regression trees  grow an ensemble predictor by binding together  $T$ tree-shaped regressors. For subsets $\bm{\mS}=\{\mS^1,\dots,\mS^T\}$ and tree sizes $\bm{K}=(K^1,\dots,K^T)'$, we define a sum-of-trees model (forest) as 
\begin{equation}\label{tree_ensemble}
f_{\mathcal{E},\mathcal{B}}(\x)=\sum_{t=1}^Tf_{\mT^t,\b^t}(\x)=\sum_{t=1}^T\sum_{k=1}^{K^t}\beta_k^{t}\,\mathbb{I}(\x\in{\Omega}_k^{t}),
\end{equation}
where $f_{\mT^t,\b^t}\in\mF(\mV_{\mS^t}^{K^t})$ is a tree learner associated with step sizes $\b^t\in\R^{K^t}$. Each learner is allowed to use different splitting variables $\mS^t$ and different number of bottom nodes $K^t$. With $\mE=\{\mT^1,\dots,\mT^T\}$ we denote an ensemble of tree  partitions 
and with $\mB=(\b^{1\prime},\dots,\b^{T\prime})'\in\R^{T\bar{K}}$ the terminal node parameters, where $\bar{K}=\frac{1}{T}\sum_{t=1}^TK^t$. 


Sum-of-trees models offer an improved representation flexibility by chopping up the predictor space into  more refined segmentations. These segmentations are obtained by superimposing multiple tree partitions, yielding what we define below as a {\sl global} partition.
\begin{definition}\label{def:global}
For a partition ensemble $\mE$, we define  a  {\sl global partition}
$$
\wt{\mT}(\mE)=\{\wt{\Omega}_k\}_{k=1}^{K(\mE)}
$$
as the partition obtained by merging  all cuts in $\mT^1,\dots,\mT^T$. We refer to $\wt{\Omega}_k$'s as {\sl global} cells in the ensemble.
\end{definition}
The concept of the global partition can be better understood from  Figure \ref{pic:tree_assembly}, where splits from $T=2$ trees (each having $3$ leaves) are merged  to obtain a global partition with ${K}(\mE)=7$ global cells. Generally, the global partition $\mathcal{E}$ itself is {\sl not} necessarily a tree and  can have as many as  $K(\mE)\leq \prod_{t=1}^TK^t$ cells. This upper bound  can be attained if each  trees splits $K^t-1$ times on a single variable, where each tree uses a different one. Without loss of generality,  the global partition will be assumed to have non-empty cells. This requirement can be met by merging vacuous cells with their nonempty neighbors.

\begin{figure}[!t]
\scalebox{0.35}{\includegraphics{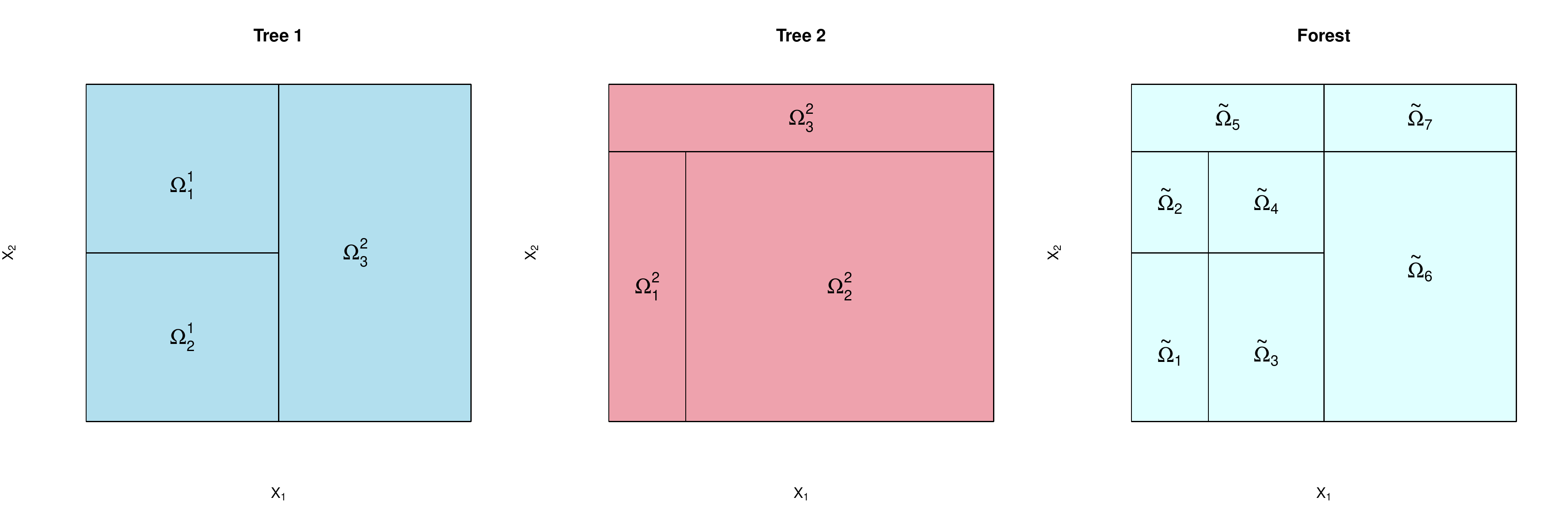}}
\caption{Ensemble of tree partitions and their global partition.}\label{pic:tree_assembly}
\end{figure}

Bayesian additive regression trees were conceived as  a collection of weak learners that capture {\sl different} aspects of the predictor space \cite{bart}. To characterize the amount of diversity/correlation between trees in the ensemble, we introduce the so-called stretching matrix. 

\begin{definition}\label{def:concat}
For a partition ensemble $\mE=\{\mT^1,\dots,\mT^T\}$, we define  the {\sl stretching matrix}  $\bm{A}(\mE)=(a_{ij})_{i,j}$ as follows:  for each $1\leq i\leq K(\mE)$ and  $1\leq j\leq  T\bar{K}$
we have
\begin{equation}\label{stretch}
a_{ij}=\1\{\widetilde{\Omega}_i\cap \Omega^t_m\neq \emptyset \},
\end{equation}
where  $1\leq t\leq T$ and $1\leq m\leq K^t$ are such that $j=\sum_{l=1}^{t-1}K^l+m$ and where $\Omega^t_m$ is the $m^{th}$ (local) cell in the $t^{th}$ tree $\mT^t$.
\end{definition}
Each row of the stretching matrix $\A(\mE)$ corresponds to one global cell and each column to one local cell.
The row entries sum to $T$, indicating which local cells overlap with that global cell 
(as shown in Figure \ref{pic:conc}  for partitions from  Figure \ref{pic:tree_assembly}).
To further characterize the pattern of overlap between trees, we introduce the $(T\bar{K})\times (T\bar{K})$ Gram matrix 
\begin{equation}\label{gram}
\wt{\bm{A}}(\mE)\equiv\bm{A}(\mE)'\bm{A}(\mE)=\left(\wt{a}_{ij}\right)_{i,j}.
\end{equation}
The off-diagonal elements  measure the ``similarity" between local cells, say $\Omega^{t}_j$ and $\Omega^{u}_k$, in terms of  the number of global cells that they share. More formally,  let
$r[\wt{\mT}(\mE),V]=|\{\wt{\Omega}_k: \wt{\Omega}_k\cap V\neq\emptyset\}|$ be the restricted cell count \cite{nobel}, measuring the number of global cells that intersect with a compact set $V$. For $i=\sum_{s=1}^{t-1}K^s+j$ and $l=\sum_{s=1}^{u-1}K^s+k$ we can write $\wt{a}_{il}=r[\wt{\mT}(\mE),\Omega^{t}_j\cap \Omega^{u}_k]$.  
 Small off-diagonal entries  indicate less overlap, where the individual trees capture more diverse aspects of the predictor space. The diagonal elements, on the other hand,  quantify the ``persistence" of each local cell,  say $\Omega_k^{u}$, counting the number of global cells it stretches over.
 More formally, for $i=\sum_{s=1}^{u-1}K^s+k$ we have $\wt{a}_{ii}=r[\wt{\mT}(\mE),\Omega_k^{u}]$.  The amount of diversity (or tree dis-similarity) in the  ensemble can be quantified with eigenvalues of $\wt{\bm{A}}(\mE)$.  We denote by $\lambda_{min}(\mE)$ (resp. $\lambda_{max}(\mE)$) the minimal (resp. maximal) singular values of ${\A}(\mE)$ (i.e. square roots of extremal nonzero eigenvalues of $\widetilde \A(\mE)$). If some trees in the ensemble are redundant,  the conditioning number $\kappa(\mE)\equiv\lambda_{max}(\mE)/\lambda_{min}(\mE)$ will be large. 
 The idea of diversifying trees was originally introduced  by Breiman \cite{breiman2} via subsampling.  
   {
   One could, in principle,  impose a restriction on $\lambda_{min}(\mE)$ in the prior to encourage the trees to collaborate and diversify. However, this is  {\sl not} required for our theoretical study. We will focus on the so-called valid ensembles which consist of valid trees.


\begin{figure}
{\scriptsize $$
 \bm{A}(\mathcal{E})=
 \begin{blockarray}{ c c c c c c c}
   &  \color{Gray}\Omega_1^1 &\color{Gray}\Omega_2^1& \color{Gray}\Omega_3^1 &\color{Gray}\Omega_1^2&\color{Gray} \Omega_2^2 &\color{Gray} \Omega_3^2 \\      
\begin{block}{c\Left{}{(\mkern1mu} c c c c c c<{\mkern1mu})}
 \color{Gray}  \widetilde{\Omega}_1 &0 &1 &0 &1 &0 &0 \\
\color{Gray}  \widetilde{\Omega}_2 &1  &0&0 &1 &0 &0 \\
\color{Gray}  \widetilde{\Omega}_3 &0 &1 &0 &0 &1 & 0\\
\color{Gray}  \widetilde{\Omega}_4 &1 &0 &0 &0 &1 & 0\\
\color{Gray}  \widetilde{\Omega}_5 &1 &0 &0 &0 &0 & 1\\
\color{Gray}  \widetilde{\Omega}_6 &0 &0 &1 &0 &1 & 0\\
\color{Gray}  \widetilde{\Omega}_7 &0 &0 &1 &0 &0 & 1\\
 \end{block}
  \end{blockarray},\quad
  \bm{A}(\mathcal{E})'\bm{A}(\mathcal{E})=
  \left(
  \begin{array}{c c c | c c c}
3&0&0&1&1&1\\
0&2&0&1&1&0\\
0&0&2&0&1&1\\
\hline
1&1&0&2&0&0\\
1&1&1&0&3&0\\
1&0&1&0&0&2\\
  \end{array}
  \right)
$$}
\vspace{-0.5cm}
\caption{Stretching matrix $\A(\mE)$ and the ``Gram matrix" $\A(\mE)'\A(\mE)$ from Figure \ref{pic:tree_assembly}.}\label{pic:conc}
\end{figure}

\begin{definition}\label{def:validensemble}
An ensemble $\mE=\{\mT^1,\dots,\mT^T\}$
is {\sl valid} if 
 each $\mT^t$ is {\sl valid} according to Definition \ref{ass:balance}.
For tree sizes ${\bm K}=(K^1,\dots, K^T)'$ and subsets $\bm{\mS}=\{\mS^1,\dots,\mS^T\}$, we denote the set of all valid ensembles by $\mV\mE_{\bmS}^{\bm K}$. 
\end{definition}}

The representation flexibility of additive trees also pertains to jump sizes. The global step size coefficients under additive trees are intertwined due to the tree overlap. This can be seen from the following, more compact, representation of \eqref{tree_ensemble}:
$$
f_{\mathcal{E},\mathcal{B}}(\x)=\sum_{k=1}^{K(\mE)} \bar{\beta}_k\,\mathbb{I}(\x\in{\wt{\Omega}}_k)\quad\text{with}\quad \bar{\beta}_k\equiv\sum_{t=1}^T\sum_{l=1}^{K^t}\beta_l^{t}\mathbb{I}(\Omega_l^{t}\cap{\wt{\Omega}}_k\neq \emptyset),
$$
where $\bar{\b}=(\bar{\beta}_1,\dots,\bar{\beta}_{K(\mE)})'\in\R^{K(\mE)}$ are aggregated step sizes. A closer look reveals the following key connection  between  $\bar{\b}$  and $\mB$ 
\begin{equation}\label{bar_mapping}
\bar{\b}=\bm{A}(\mathcal{E})\mB,
\end{equation}
where  $\bm{A}(\mathcal{E})$ is the stretching matrix defined in \eqref{stretch}.
This link  unfolds the theoretical analysis of tree ensembles using  tools that we have already developed for single trees. Note that the condition number $\kappa(\mE)$ determines how much the relative change in $\bar\b$ influences the relative change in $\mB$. 

The mapping \eqref{bar_mapping} can be in principle many-to-one in the sense that many tree-structured step functions can sum towards the same target \eqref{tree_ensemble}. 
Such over-parametrization occurs, for instance, when ${K}(\mE)<T\bar{K}$ or, more generally, when ${\A}(\mE)$ has zero eigenvalues. This redundancy is not entirely unwanted  and, in fact, it endows sum-of-trees models with a lot of modeling freedom. 


We now formally define the space of approximating additive trees. For  variable  sets $\bmS=\{\mS^1,\dots,\mS^T\}$ and a vector of tree sizes  $\bm{K}=(K^1,\dots, K^T)'$, we denote by
\begin{equation}\label{eq:mfv}
\mF(\mV\mE_{\bmS}^{\bm{K}})=\left\{f_{\mE,\mB}:[0,1]^p\rightarrow\R:f_{\mE,\mB}(\x)=\sum_{t=1}^Tf_{\mT^t,\b^t}(\x);\mE\in\mV\mE_{\bmS}^{\bm K},\b^t\in\R^{K^t} \right\}
\end{equation}
the set of all additive tree step functions supported on {\sl valid} ensembles  $\mV\mE_{\bmS}^{\bm K}$.
The union of these over the number of trees $T$, all possible sets $\bmS$ of sizes $\bm{q}=(q^1,\dots,q^T)'$ and tree sizes $\bm{K}$ gives rise to
\begin{equation}\label{mFE}
\mF_\mE=\bigcup_{T=1}^\infty\bigcup\limits_{\substack{\bm{q}}} \bigcup\limits_{\substack{\bmS:|\mS^t|=q^t}}
\bigcup\limits_{\substack{\bm{K}}} \mF(\mV\mE_{\bmS}^{\bm{K}}),
\end{equation}
our approximating space of additive regression tree functions.

\subsection{Additive Regression Trees are Adaptive}\label{sec:ensembles}
This section provides an interesting  initial perspective on the behavior of Bayesian additive regression trees in Regime 1. We will continue with the more general Regime 2 in the next section.
We will focus on a  variant of  the popular Bayesian Additive Regression Trees (BART) model \cite{bart}, modified in three ways. First, the tree prior will be according to   \cite{cart2} rather than \cite{cart1}. The second modification is that the trees are built on the {\sl same set of variables}, endowed with a subset selection prior construction. In the next section, we allow for the fully  general case where each tree builds on a potentially different set of variables. Third, rather than fixing the number of trees, we endow $T$ with a prior distribution.

We will see that having a good control of the regression function variation inside each {\sl global} cell together with a good choice of the prior on the total number of leaves $\sum K^t$ will be sufficient to ensure optimal behavior. The approximation ability of tree ensembles hinges on the diameter of the {\sl global} partition. Each tree partition does not  need to have a small diameter (i.e. can be a weak learner), as long as the global one does. 
An important building block in our proof will be the construction of a single tree ensemble that can approximate well. As will be shown in Lemma \ref{lemma:approx:ensemble}, we can construct such ensemble by first finding a single $k$-$d$ tree from Lemma \ref{lemma:approx} (a strong learner) and then redistributing the cuts among small trees (weak learners) in a way that the global partition is exactly equal to  the $k$-$d$ tree.
An example of this deconstruction is depicted in Figure \ref{bayestrees},   where a full symmetric tree from Figure \ref{bayestrees2}, say $\widehat{\mT}$, has been trimmed into many smaller imbalanced trees which add up towards $\widehat{\mT}$. More details on this decomposition are in the proof of Lemma \ref{lemma:approx:ensemble}. 

\begin{figure}
    \subfigure[$\mT^1$]{
    \begin{minipage}[b]{0.2\textwidth}
   
\scalebox{0.7}{
\xymatrix@-1pc{
 & & &   *++[o][F-]\txt{}  \ar@{-}[dl] \ar@{-}[dr] & \\
 & &     *++[o][F-]{\txt{}}  \ar@{-}[dl]  \ar@{-}[dr] &  &  *++[o][F]\txt{}\\
&       *++[o][F-]\txt{{}}  \ar@{-}[dl] \ar@{-}[dr]   &    & *++[o][F]\txt{}&\\
*++[o][F]\txt{} &    &    *++[o][F]\txt{}& &\\
}}
      \end{minipage}}
 \subfigure[$\mT^2$]{
    \begin{minipage}[b]{0.2\textwidth}

\scalebox{0.7}{
\xymatrix@-1pc{
 & & &   *++[o][F-]\txt{}  \ar@{-}[dl] \ar@{-}[dr] & \\
 & &     *++[o][F-]{\txt{}}  \ar@{-}[dl]  \ar@{-}[dr] &  &  *++[o][F]\txt{}\\
&       *++[o][F-]\txt{{}}   &    & *++[o][F]\txt{} \ar@{-}[dl] \ar@{-}[dr]  &\\
&&*++[o][F]\txt{} &    &    *++[o][F]\txt{} \\
}}
      \end{minipage}}
 \subfigure[$\mT^3$]{
    \begin{minipage}[b]{0.2\textwidth}
\scalebox{0.7}{
\xymatrix@-1pc{
 & & &   *++[o][F-]\txt{}  \ar@{-}[dl] \ar@{-}[dr] && \\
 & &     *++[o][F-]{\txt{}}&  &  *++[o][F]\txt{}  \ar@{-}[dl]\ar@{-}[dr]& \\
&         &    & *++[o][F]\txt{} \ar@{-}[dl] \ar@{-}[dr]  & & *++[o][F-]\txt{{}} \\
&&*++[o][F]\txt{} &    &    *++[o][F]\txt{} &\\
}}
\end{minipage}}
       \subfigure[$\mT^4$]{
    \begin{minipage}[b]{0.2\textwidth}
\scalebox{0.7}{
\xymatrix@-1pc{
 & & &   *++[o][F-]\txt{}  \ar@{-}[dl] \ar@{-}[dr] && \\
 & &     *++[o][F-]{\txt{}}&  &  *++[o][F]\txt{}  \ar@{-}[dl]\ar@{-}[dr]& \\
&         &    & *++[o][F]\txt{}   & & *++[o][F-]\txt{{}}\ar@{-}[dl] \ar@{-}[dr] \\
&& &&*++[o][F]\txt{} &    &    *++[o][F]\txt{} \\
}}
\end{minipage}}
    \caption{\scriptsize Trees in the approximating ensemble $\wh{\mE}$.}
    \label{bayestrees}
\end{figure}

\begin{figure}

\scalebox{0.7}{
\xymatrix@-1pc{
& & & &  & &  *++[o][F-]\txt{}  \ar@{-}[dlll] \ar@{-}[drrr] &&& & & &\\
 & & &     *++[o][F-]{\txt{}}\ar@{-}[dll] \ar@{-}[dr]& & && &&  *++[o][F]\txt{}  \ar@{-}[dl]\ar@{-}[drr]& &&\\
&      *++[o][F]\txt{} \ar@{-}[dl] \ar@{-}[dr]   && & *++[o][F-]{\txt{}}\ar@{-}[dl] \ar@{-}[dr] &   & &  &  *++[o][F-]{\txt{}}\ar@{-}[dl] \ar@{-}[dr] & & & *++[o][F-]\txt{{}}\ar@{-}[dl] \ar@{-}[dr]& \\
*++[o][F]\txt{} & & *++[o][F]\txt{} & *++[o][F]\txt{}& &*++[o][F]\txt{}  &&*++[o][F]\txt{}  &&  *++[o][F]\txt{}  &   *++[o][F]\txt{}  & & *++[o][F]\txt{}    \\
}}
   \caption{\scriptsize  Approximating $k$-$d$ tree partition $\wh{\mT}=\widetilde{\mT}(\wh{\mE})$}
    \label{bayestrees2}
\end{figure}
\CompileMatrices

The following theorem is an ensemble variant of Theorem \ref{thm:trees} which will serve as a useful stepping stone towards the full-fledged  result presented in the next section. 
Instead of approximating $f_0\in\Ha_p\cap\mathcal{C}(\mS_0)$  with one large tree in Regime 1, we build a forest made up of many smaller trees (weak learners). The first two layers of the prior \eqref{prior:dim} and \eqref{prior:subset} are the same. Next, we assign a prior on the number of trees
 \begin{equation}\label{prior:T}
\pi(T)\propto \e^{-C_T\, T}\quad\text{for}\quad T\in\N\backslash\{0\}\quad\text{with}\quad C_T>\log 2,\tag{T3*}
\end{equation}
which is sufficiently diffuse so as to promote ensembles with many trees. Next, conditionally on $T$, we assign a prior on $\bm{K}=(K^1,\dots, K^T)'$ as an independent product of Poisson distributions \eqref{prior:K}. An important distinction between the Poisson prior deployed for single trees in Section \ref{sec:singletree} and the one deployed here for ensembles  is that the hyper-parameter  depends on $T$. Namely, our prior on leaves satisfies
\begin{equation}\label{prior:KK}
\pi(\bm{K}\C T)\,=\, \prod_{t=1}^T\frac{(\lambda/T)^{K^t}}{(\e^{\lambda/T}-1)K^t!}\quad\text{for}\quad K^t\in\N\backslash\{0\} \tag{T4*}.
\end{equation}
As a prior on $\mE$, given $(\mS,\bm{K})$, we use the uniform prior over valid ensembles
\begin{equation}\label{prior:partition}
\pi(\mE\C \mathcal{S}, \bm{K})=\frac{1}{\Delta(\mV\mE_\mS^{\bm{K}})}\1\left( \mE\in \mV\mE_\mS^{\bm{K}} \right)\tag{T5*},
\end{equation}
where  $\Delta(\mV\mE_\mS^{\bm{K}})$ is the overall number of ensembles consisting of $T$ valid trees that can be obtained by splitting on data points $\mX$.
Recall that throughout this section, all trees are constrained to use split variables in the same set $\mS$. To mark this difference, we have denoted the partition ensembles with $\mV\mE_{\mS}^{\bm K}$ instead of $\mV\mE_{\bmS}^{\bm K}$. 

Finally,  given $(T,\bm{K})$, we assign an iid Gaussian prior on the step heights $\mB\in\R^{\sum_{t=1}^TK^t}$ with a variance $1/T$ (as suggested by \cite{bart})
\begin{equation}\label{prior:beta}
\pi(\mB\C T,\bm{K})=\prod_{t=1}^T\prod_{k=1}^{K^t}\phi(\beta_k^t;1/T)\tag{T6*}.
\end{equation}
The following theorem shows that additive regression trees, in combination with a subset selection prior,  can nicely adapt to the ambient dimensionality and smoothness, 
 {\sl also} achieving the optimal  concentration rate in Regime $1$.

\begin{theorem}\label{thm:weak_learner}
Assume $f_0\in \mathcal{H}^{\alpha}_{p}\cap \mathcal{C}(\mS_0)$  with {$0<\alpha\leq 1$} and $0<q_0=|\mS_0|$ such that
{$q_{0}\|f_0\|_{\Ha} \lesssim \log  n$   and $\|f_0\|_\infty\lesssim \log n$}. Moreover, we assume $\log p\lesssim n^{q_{0}/(2\alpha+q_{0})}$ and that $\mX$ is $(M,\mS_0)$-regular. We endow $\mF_\mE$ with priors \eqref{prior:dim},\eqref{prior:subset},\eqref{prior:T}-\eqref{prior:beta}.   With {$\varepsilon_n=n^{-\alpha/(2\alpha+q_{0})}\log n$} we have 
 \begin{equation*}
\Pi\left( f \in \mathcal{F}_\mE : \|f_0 - f\|_n > M_n\,\varepsilon_n \mid \Y^{(n)}\right) \to 0,
\end{equation*}
 for any $M_n \to \infty$  in $P_0^n$-probability, as $n,p \to \infty$. 
\end{theorem}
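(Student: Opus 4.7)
My plan is to verify the three Ghosal--van der Vaart conditions \eqref{eq:entropy}, \eqref{eq:prior}, \eqref{eq:remain} for $\mathcal{F}_\mE$ with rate $\varepsilon_n = n^{-\alpha/(2\alpha+q_0)}\log n$, by adapting the single-tree argument of Theorem \ref{thm:trees} to the ensemble setting. The sieve I will use is
$$
\mathcal{F}_n=\Bigl\{f_{\mE,\mB}\in\mathcal{F}_\mE:\ T\le T_n,\ |\mS^t|\le q_0\ \forall t,\ \mS^t\subseteq\mS^\star,\ \textstyle\sum_t K^t\le K_n,\ \|\mB\|_\infty\le B_n\Bigr\},
$$
with $K_n\asymp n\varepsilon_n^2/\log n\asymp n^{q_0/(2\alpha+q_0)}\log n$, $T_n$ a polynomial in $n$, $|\mS^\star|\le Q_n\asymp n\varepsilon_n^2/\log p$, and $B_n$ polynomial in $n$. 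Writing $f_{\mE,\mB}=\sum_k\bar\beta_k\1_{\wt\Omega_k}$ via $\bar\b=\A(\mE)\mB$ lets me recycle the ``global partition'' machinery from Section \ref{sec:treeensembles}, so that the entropy and bias arguments look nearly identical to the single-tree case after passing to $\wt{\mT}(\mE)$.

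For the prior mass bound \eqref{eq:prior}, I will use Lemma \ref{lemma:approx} to pick a $k$-$d$ tree $\wh\mT$ on $\mS_0$ with $K\asymp n^{q_0/(2\alpha+q_0)}$ leaves so that $\|f_0-f_{\wh\mT,\wh\b}\|_n\lesssim \varepsilon_n$, and then invoke the announced Lemma \ref{lemma:approx:ensemble} to redistribute the $K-1$ splits of $\wh\mT$ across $\wh T\asymp\log n$ smaller trees $\wh\mE=\{\wh\mT^t\}$ with $\wh{K}^t\asymp K/\wh T$, whose global partition coincides with $\wh\mT$. I then factor the prior as $\pi(q)\pi(\mS\mid q)\pi(T)\pi(\bm K\mid T)\pi(\mE\mid\mS,\bm K)\pi(\mB\mid T,\bm K)$ and evaluate it in a small $\ell_2$-neighbourhood of $\wh\mB$. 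The dimension term contributes $\log\pi(q_0)\gtrsim -q_0\log p$, the subset term $-\log\binom{p}{q_0}\gtrsim-q_0\log p$, and the number-of-trees term $-C_T\wh T$. The crucial cancellation happens in the leaves prior: $\log\pi(\wh{\bm K}\mid\wh T)\gtrsim \sum_t\bigl[\wh K^t\log(\lambda/\wh T)-\log(\wh K^t!)\bigr]\gtrsim -K\log(K/\lambda)$, so the $T$-dependent rate $\lambda/T$ makes the ensemble prior behave, on the relevant ball, exactly like the single-tree Poisson. The partition term $-\log\Delta(\mV\mE_{\mS_0}^{\wh{\bm K}})\gtrsim-\sum_t(\wh K^t-1)\log q_0-\sum_t \wh K^t\log n$ is again of order $-K\log n$, and the Gaussian step-heights prior (with variance $1/\wh T$) on $\|\mB-\wh\mB\|_\infty\le\varepsilon_n/K$ contributes $-C\sum_t \wh K^t\log(T/\varepsilon_n)$. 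All terms add up to $-Cn\varepsilon_n^2$, which is what \eqref{eq:prior} requires. Controlling $\|\wh\mB\|_\infty$ through $\|f_0\|_\infty\lesssim\log n$ is where the logarithmic growth assumption on $\|f_0\|_\infty$ enters.

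For the entropy \eqref{eq:entropy}, I will cover $\mathcal{F}_n$ by first enumerating $(T,q,\bm K,\mS,\mE)$ and then placing an $\varepsilon$-net on $\bar\b\in\R^{K(\mE)}$ (the global step-height vector). The log-number of choices of $T\le T_n$ is $O(\log n)$; of $\mS^\star$ is $\log\binom{p}{Q_n}\lesssim Q_n\log p\lesssim n\varepsilon_n^2$; of the subset assignments $(\mS^t)_{t\le T}$ within $\mS^\star$ is at most $TQ_n\log q_0$; of $\bm K$ with $\sum K^t\le K_n$ is $O(K_n\log T_n)$; and by Lemma \ref{lemma:complexity} the log-number of valid ensembles for fixed $(\mS,\bm K)$ is $\sum_t[(K^t-1)\log q_0+K^t\log n]\lesssim K_n\log n$. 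Since $K(\mE)\le\sum_tK^t\le K_n$, the $\varepsilon$-net on $\bar\b$ in $\|\cdot\|_n$ contributes a further $O(K_n\log(B_n T_n/\varepsilon_n))$ bits, using $\|f_{\mE,\mB}-f_{\mE,\mB'}\|_n\le\|\bar\b-\bar\b'\|_\infty$. All terms are $O(n\varepsilon_n^2)$ by the choice $K_n\asymp n\varepsilon_n^2/\log n$. The remaining mass condition \eqref{eq:remain} is handled termwise using the exponential tails: $\pi(q>Q_n)\lesssim e^{-aQ_n\log p}$, $\pi(T>T_n)\lesssim e^{-C_TT_n}$, the Poisson convolution tail $\pi(\sum K^t>K_n\mid T\le T_n)\lesssim e^{-cK_n\log K_n}$, and a standard Gaussian tail bound on $\pi(\|\mB\|_\infty>B_n)$; each is $o(e^{-(d+2)n\varepsilon_n^2})$ for appropriately chosen $T_n,Q_n,B_n$.

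The main obstacle will be the prior-mass calculation, specifically the bookkeeping of how the $T$-dependent Poisson rate $\lambda/T$ interacts with the product structure across trees and with $\Delta(\mV\mE_\mS^{\bm K})$. One needs the approximating ensemble to be shallow enough (few trees, each not too tiny) so that $\pi(\bm K\mid T)$ is not too small, yet fine enough to faithfully reproduce $\wh\mT$; Lemma \ref{lemma:approx:ensemble} is what makes this tradeoff possible, and verifying that its particular decomposition of $\wh\mT$ yields valid trees consistent with Definition \ref{ass:balance} is the delicate step. The extra $\log n$ factor in $\varepsilon_n$ (compared to Theorem \ref{thm:trees}) absorbs the additional $\log T$ losses from the per-tree prior, which is why the assumptions $q_0\|f_0\|_{\mathcal{H}^\alpha}\lesssim\log n$ and $\|f_0\|_\infty\lesssim\log n$ are slightly relaxed relative to Theorem \ref{thm:trees}.
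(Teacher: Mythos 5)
Your overall strategy — verify the three Ghosal--van der Vaart conditions by passing to global partitions and using the stretching-matrix machinery for the entropy bound, and an explicit approximating ensemble for the prior-mass bound — is the right one, and it is essentially what the paper does. However, there is a concrete error in the way you invoke Lemma~\ref{lemma:approx:ensemble}, and it propagates through your prior-mass accounting.

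You claim the lemma lets you ``redistribute the $K-1$ splits of $\wh\mT$ across $\wh T\asymp\log n$ smaller trees with $\wh K^t\asymp K/\wh T$.'' That is not what Lemma~\ref{lemma:approx:ensemble} states. The construction there produces $\wh T = 2^{sq-1}\asymp K(\wh\mE)/2$ trees, each a completely imbalanced chain with $\wh K^t = sq+1\asymp\log_2 K(\wh\mE)+1$ leaves. With $K(\wh\mE)=a_n\asymp n^{q_0/(2\alpha+q_0)}$, this means the approximating ensemble has $\wh T\asymp a_n$ trees and a \emph{total} of $\widetilde a_n = \sum_t\wh K^t\asymp a_n\log a_n$ leaves, i.e.\ a full extra $\log$ factor compared to the single $k$-$d$ tree with $a_n$ leaves. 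It is precisely this inflation in $\widetilde a_n$ that forces $\varepsilon_n = n^{-\alpha/(2\alpha+q_0)}\log n$ (so $\beta=1$ rather than $\beta=1/2$): the dominating terms in the log prior mass (the Poisson term $-\log\pi(\wh{\bm K}\mid\wh T)\gtrsim \widetilde a_n\log a_n$, the partitioning term $\widetilde a_n\log n$, the Gaussian volume term $\widetilde a_n\log(\widetilde a_n/\varepsilon_n^2)$) are all of order $a_n\log^2 n = n\varepsilon_n^2$ only if $\beta\geq 1$. Your decomposition with $\sum_t\wh K^t\asymp K$ would avoid the extra log, so your own claim that the $\log n$ factor ``absorbs the $\log T$ losses'' is internally inconsistent: with your parameters you would need no extra log at all. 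If you wish to use a decomposition into few trees of size $\asymp K/\log n$ whose union of splits equals $\wh\mT$'s, you cannot cite the lemma for it — you would have to construct it and verify that each tree is a valid rooted subtree of $\wh\mT$ (it is plausible, but it is not what the lemma supplies and the paper deliberately does not do this).

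A secondary but real issue: your sieve $\{|\mS^t|\le q_0,\ \mS^t\subseteq\mS^\star\}$ allows each tree its own active set. Under the priors (T1),(T2),(T3$^\star$)--(T6$^\star$) of Theorem~\ref{thm:weak_learner}, all trees in the ensemble are constrained to split on the \emph{same} $\mS$; this is exactly what distinguishes Theorem~\ref{thm:weak_learner} from Theorem~\ref{thm:additivetrees}. Your sieve belongs to the latter setting, and the entropy bookkeeping (counting $(\mS^t)_{t\le T}$ within $\mS^\star$) is therefore directed at the wrong prior. The paper's sieve is simply $\bigcup_T\bigcup_{q\le q_n}\bigcup_{|\mS|=q}\bigcup_{\sum K^t\le z_n}\mF(\mV\mE_\mS^{\bm K})$, with no bound on $\|\mB\|_\infty$; unboundedness of the coefficients is handled by the SVD/ellipsoid covering argument (your $\|\mB\|_\infty\le B_n$ truncation is an alternative but requires a Gaussian tail argument you would need to write out).

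Finally, a presentational point: you write $\log\pi(q_0)\gtrsim -q_0\log p$ with $\gtrsim$ where you mean the negative is $\lesssim q_0\log p$, but that is a notational slip, not a substantive error.
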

\begin{proof}
Supplemental  Material (Section 2).
\end{proof}
Similarly as for single trees, we obtain the following corollary which states that  the posterior concentrates on ensembles whose {\sl overall number of leaves} is not much larger than the optimal value $n^{q_0/(2\alpha+q_0)}\log n$.
\begin{corollary}\label{corollary2}
Under the assumptions of Theorem \ref{thm:weak_learner}  {with $0<\alpha\leq 1$} we have 
$$
\Pi\left((T,\bm{K}): \sum_{t=1}^T K^t> C_k n^{q_0/(2\alpha+q_0)}\log n \mid \Y^{(n)}\right) \to 0
$$ 
in $P_0^n$-probability, as $n,p \to \infty$, for a suitable constant $C_k>0$.
\end{corollary}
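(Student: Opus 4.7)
The plan is to mirror the strategy used for Corollary \ref{corollary1}: reduce the posterior statement to a prior-mass bound via Lemma 1 of Ghosal and Van der Vaart, and then bound that prior-mass using only the marginal of $(T,\bm K)$ under \eqref{prior:T}--\eqref{prior:KK}. More precisely, because the other two ingredients (entropy and local prior mass) have already been verified in the proof of Theorem \ref{thm:weak_learner}, it suffices to show that
\[
\Pi\Bigl((T,\bm K): \sum_{t=1}^T K^t > C_k\,n^{q_0/(2\alpha+q_0)}\log n\Bigr)\;=\;o\bigl(\e^{-(d+2)n\varepsilon_n^2}\bigr)
\]
for a sufficiently large constant $d>2$, with $n\varepsilon_n^2 = n^{q_0/(2\alpha+q_0)}\log^2 n$.

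Set $R_n = C_k\,n^{q_0/(2\alpha+q_0)}\log n$ and $N=\sum_{t=1}^T K^t$. Since $N\ge T$, I split the event $\{N>R_n\}$ as $\{T>R_n/2\}\cup\{T\le R_n/2,\, N>R_n\}$. For the first piece the exponential prior \eqref{prior:T} yields the geometric-tail bound $\Pi(T>R_n/2)\lesssim \e^{-C_T R_n/2}$. For the second piece I condition on $T$ and apply a Chernoff bound through the probability generating function of the truncated-Poisson leaves:
\[
E[\theta^N\mid T] \;=\; \left(\frac{\e^{\lambda\theta/T}-1}{\e^{\lambda/T}-1}\right)^{\!T} \le \; \theta^T\,\e^{\lambda\theta},
\]
where the inequality uses $\e^z-1\le z\e^z$ in the numerator and $\e^z-1\ge z$ in the denominator. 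Optimizing $\theta>1$ in $\theta^{T-R_n}\e^{\lambda\theta}$ at $\theta=(R_n-T)/\lambda$ gives
\[
P(N>R_n\mid T)\;\le\;\exp\!\bigl(-(R_n-T)\bigl[\log((R_n-T)/\lambda)-1\bigr]\bigr),
\]
so that for $T\le R_n/2$ we obtain $P(N>R_n\mid T)\le \exp(-c\,R_n\log R_n)$. Summing over $T\le R_n/2$ against $\pi(T)$ costs only a multiplicative constant, and combining both pieces yields
\[
\Pi(N>R_n)\;\lesssim\;\e^{-C_T R_n/2}+\e^{-c\,R_n\log R_n}.
\]

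To finish, substitute $R_n\log R_n \asymp n^{q_0/(2\alpha+q_0)}\log^2 n = n\varepsilon_n^2$ and $R_n \asymp n^{q_0/(2\alpha+q_0)}\log n$. Choosing the constant $C_k$ (and, if needed, taking $C_T$ sufficiently large relative to $d+2$, which is permissible since \eqref{prior:T} only demands $C_T>\log 2$) ensures that both exponents dominate $(d+2)\,n\varepsilon_n^2$. Lemma 1 of \cite{ghosal_vdv} then delivers the conclusion.

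The main obstacle is the second term $\e^{-C_T R_n/2}$: unlike the single-tree case where one relies solely on the super-exponential Poisson tail, here the exponential prior on $T$ alone decays only linearly in $T$, and $T$ feeds into $N$ through the constraint $N\ge T$. The argument therefore requires coupling the scaling of $C_k$ with the hyperparameter $C_T$ (so that $C_T C_k$ is at least on the order of $d+2$ times the constant hidden in $n\varepsilon_n^2/R_n$); I would make this dependence explicit when writing the details, much as the analogous dependence of $C_k$ on the prior parameters is suppressed in the statement of Corollary \ref{corollary1}.
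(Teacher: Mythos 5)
Your decomposition $\{N > R_n\} \subseteq \{T > R_n/2\}\ltxcup\{T\leq R_n/2,\, N > R_n\}$ and the conditional Chernoff bound on the second piece are both correct, and that piece gives $\exp(-c\,R_n\log R_n)\asymp\exp(-c'\,n\varepsilon_n^2)$, which is the right order. The gap is in the first piece, and it is fatal for this proof strategy. With $R_n = C_k\,n^{q_0/(2\alpha+q_0)}\log n$, which is of order $n\varepsilon_n^2/\log n$, the geometric-tail bound yields $\Pi(T > R_n/2)\lesssim\e^{-C_T R_n/2} = \e^{-\Theta(n\varepsilon_n^2/\log n)}$, and this falls short of $o(\e^{-(d+2)n\varepsilon_n^2})$ by a factor of $\log n$ in the exponent. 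Your closing remark that this could be repaired ``by taking $C_T$ sufficiently large relative to $d+2$'' does not hold up: $C_T$ and $C_k$ are fixed constants, while the shortfall $n\varepsilon_n^2/R_n\asymp\log n/C_k$ diverges, so no fixed choice of hyperparameters closes the gap. Worse, since each $K^t\geq 1$ implies $N\geq T$, one has the prior lower bound $\Pi(N > R_n)\geq\Pi(T > R_n)\gtrsim\e^{-C_T R_n}$; hence the prior mass of the target event is genuinely of order $\e^{-\Theta(n\varepsilon_n^2/\log n)}$ for this choice of $R_n$, and the blunt prior-mass route via Lemma~1 of \cite{ghosal_vdv} cannot reach the stated conclusion regardless of how the second piece is bounded.

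The paper's own argument (Section~\ref{sec:remain:ens}, invoked from Section~\ref{sec:last_one}) avoids the explicit split on $T$. It applies Chernoff directly to $\sum_t K^t$ with a parameter $\gamma = \log z_n$ that grows with $n$, bounds the conditional MGF $\prod_t\mathbb{E}[\e^{\gamma K^t}\mid T]$, and sums against $\pi(T)$ in one pass, arriving at a bound of the form $\e^{-\gamma(z_n+1)+\e^\gamma\lambda}=\e^{-z_n\log z_n + \lambda z_n}$ with $z_n\log z_n\asymp n\varepsilon_n^2$. The structural difference matters: a Chernoff parameter growing like $\log z_n$ is incompatible with your split because the resulting $\e^{\gamma T}$ factor in the conditional MGF cannot be summed against the geometric prior $\pi(T)\propto\e^{-C_T T}$ once $\gamma > C_T$, which is precisely why you peeled off the $T$-tail beforehand. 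But peeling it off exposes the slow exponential decay of $\pi(T)$, which then dominates. So your route and the paper's are genuinely different, and the one you chose cannot deliver the claimed rate.
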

\begin{proof}
This follows from Lemma 1 of \cite{ghosal_vdv}  and Section \ref{sec:last_one}.
\end{proof}
{Corollary \ref{corollary2} shows that the posterior distribution rewards either many weak learners or a few strong ones. }
The compromise between the two is regulated by the prior \eqref{prior:T}, where stronger shrinkage (i.e. larger $C_T$) will result in fewer trees. This corollary provides an important theoretical justification for why Bayesian additive regression trees have been so resilient to overfitting in practice. 
{\begin{remark}
The dependence on $T$ in the Poisson prior (T4$^\star$)  works nicely in tandem with the exponential prior \eqref{prior:T}. Removing  $T$ from (T4$^\star$) would have to be balanced with a bit stronger prior $\pi(T)$.  Theorem \ref{thm:weak_learner} also holds with $T$ fixed (with slight modifications of the proof). The prior $\pi(T)$ will be instrumental in the additive case (next section). The dependence on $T$ in \eqref{prior:beta}, though recommended in practice \cite{bart}, is not needed in Theorem \ref{thm:weak_learner}.
\end{remark}}
\vspace{-0.5cm}

\section{Tree Ensembles in Additive Regression}\label{sec:additiveensembles}
In Section \ref{sec:post_conc_bcart} we have shown that the posterior distribution  under the Bayesian CART prior has optimal properties. However, it is now well known that the practical deployments of Bayesian CART suffer from poor MCMC mixing. Additive aggregations of single small trees \cite{bart} have proven to have far more superior mixing properties. One may wonder whether the benefits of additive trees are purely computational or whether there are some aspects that make them more attractive also theoretically. We  will address this fundamental question.

For estimating a single smooth function, we were not able to tell apart single trees from tree ensemble in terms of their convergence rate (besides perhaps a small difference in the log factor). They are both optimal in Regime 1.  Tree ensembles  are inherently additive and, as such, are well-equipped for approximating additive $f_0$ (Regime 2). Throughout this section, we assume 
\begin{equation}\label{f0:additive}
f_0(\x)=\sum_{t=1}^{T_0}f_0^t(\x),
\end{equation}
where  $f_0^t\in\mathcal{H}^{\alpha^t}_p\cap\mC(\mS_0^t)$. Note that each component $f_0^t$ depends only on a potentially very small subset $\mS_0^t$ of covariates, where $|\mS_0^t|=q_0^t$.
However, the additive structure allows $f_0$ to depend on a larger number of variables, say $q_0$, where $\max_{1\leq t\leq {T_0}}q_0^t\leq q_0 \leq \sum_{t=1}^{T_0}q_0^t$. The minimax rate $r_n^2$ for estimating $f_0$ in Regime 2 satisfies 
$C_1 \underline\varepsilon_n^2\leq r_n^2\leq C_2\bar\varepsilon_n^2$  \cite{yang},
where {
\begin{align*}
\underline\varepsilon_n^2&=\sum_{t=1}^{T_0}\left[\lambda^{t2}\left(\sqrt{n}\lambda^t\right)^{-4\alpha^t/(2\alpha^t+q_0^t)}+\frac{q_0^t}{n}\log\left(\frac{p}{\sum_t q_0^t}\right)\right],\\
\bar\varepsilon_n^2&= \sum_{t=1}^{T_0}\left[\lambda^{t2}\left(\sqrt{n}\lambda^t\right)^{-4\alpha^t/(2\alpha^t+q_0^t)}+\frac{q_0^t}{n}\log\left(\frac{p}{\min_t q_0^t}\right)\right]
\end{align*}
and  
where $\lambda^t$ is the H\"{o}lder norm of $f_0^t$}. The sparsity constraint  in Regime 2 is less strict than in Regime 1, where $q_0$ can be potentially larger than $\log n$ while still allowing for consistent estimation. For the isotropic case ($\alpha^t=\alpha$ and $q_0^t=\wt{q}_0$), single trees can achieve the slower rate $n^{-2\alpha/(2\alpha+q_0)}$ where $\widetilde{q}_0\leq q_0\leq T_0\widetilde{q}_0$.   As will be shown below, tree ensembles can achieve a faster rate {$\varepsilon_n^2=\sum_{t=1}^{T_0}(\varepsilon_n^{t})^2$ where  $\underline\varepsilon_n^2\lesssim \varepsilon_n^2\lesssim \bar\varepsilon_n^2$ and where
$(\varepsilon_n^{t})^2=\lambda^{t2}\left(\sqrt{n}\lambda^t\right)^{-4\alpha^t/(2\alpha^t+q_0^t)}+\frac{q_0^t}{n}\log\left(\frac{p}{q_0^t}\right)$.}

We will approximate $f_0$ with tree ensembles $f_{\mE,\mB}\in\mF_\mE$.  The ensembles here differ from the ones considered in Section \ref{sec:ensembles}.
The crucial difference is that now we allow each of the trees $\mT^t$ to depend on a {\sl different set of variables $\mS^t$}. 
Now we have a vector of subset sizes $\bm{q}=(q^1,\dots, q^T)'$ and a set of subsets $\bmS=\{\mS^1,\dots, \mS^T\}$, one for each tree.
We consider the following  independent product  variant of the complexity prior \eqref{prior:dim} 
\begin{equation}\label{prior:dim2}
\pi(\bm{q}\C T)\propto \prod_{t=1}^Tc^{-q^t}p^{-aq^t},\,q^t=0,1,\dots,p,\,\quad\text{for}\quad a>2 \tag{T1*}
\end{equation}
and a product prior variant of \eqref{prior:subset}, given $(T,\bm{q})$, 
\begin{equation}\label{prior:subset2}
\pi(\bmS\C T,\bm{q})\propto \prod_{t=1}^T1/{p\choose q^t} \tag{T2*}.
\end{equation}

The prior on the number of trees, the number of leaves, ensembles and step sizes is the same as in  \eqref{prior:T}, \eqref{prior:KK}, \eqref{prior:partition}.

We are now ready to present our final result showing that the posterior concentration for Bayesian additive regression trees is near-minimax rate optimal  when $f_0$ has an additive structure.

\begin{theorem}\label{thm:additivetrees}
Assume that $f_0$ is as in \eqref{f0:additive} where $f_0^t\in \mathcal{H}^{\alpha^t}_p\,\cap\, \mC(\mS_0^t)$  with {$0<\alpha^t\leq 1$} and $q_0^t=|\mS_0^t|$ such that
{$0<q_{0}^t \|f_0^t\|_{\mathcal{H}_{\alpha^t}}\lesssim\log^{1/2} n$} and  $\|f_0^t\|_\infty\lesssim\log^{1/2} n$. Moreover, assume  $\log p\lesssim \min\limits_{1 \leq t \leq T_0} n^{q_{0}^t/(2\alpha^t+q_{0}^t)}$ and that $\mX$ is $(M,\mS_0^t)$-regular for $1\leq t\leq T_0$. We endow $\mF_\mE$ with priors \eqref{prior:dim2}-\eqref{prior:beta}.  With {$\varepsilon_n^2= \sum_{t=1}^{T_0}  n^{-2\alpha^t/(2\alpha^t+q_{0}^t)}\log n$} and {$T_0\lesssim n$}, we have 
\begin{equation*}
\Pi\left( f \in \mathcal{F}_\mE : \|f_0 - f\|_n > M_n\, \varepsilon_n \mid \Y^{(n)}\right) \to 0,
\end{equation*}
 for any $M_n \to \infty$  in $P_0^n$-probability, as $n,p \to \infty$ .
\end{theorem}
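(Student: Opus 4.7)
The proof will follow the general Ghosal--Van der Vaart recipe by verifying the three conditions \eqref{eq:entropy}, \eqref{eq:prior} and \eqref{eq:remain} with the target rate $\varepsilon_n^2=\sum_{t=1}^{T_0} n^{-2\alpha^t/(2\alpha^t+q_0^t)}\log n$. The proof structurally mirrors Theorem \ref{thm:weak_learner}, but with two essential differences: (i) in the approximation step we exploit additivity of $f_0$ by approximating each smooth piece $f_0^t$ separately with its own $k$-$d$ tree (of a different size, built on $\mS_0^t$), and (ii) every prior layer is now a product over the $T$ trees, so the mass and complexity bounds factor across components. The natural ``oracle'' ensemble has $T=T_0$ trees, $K^t\asymp n^{q_0^t/(2\alpha^t+q_0^t)}$ leaves in the $t^{th}$ tree, and splits only on $\mS_0^t$.

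For the prior mass condition \eqref{eq:prior}, I invoke Lemma \ref{lemma:approx} once per component to produce a valid $k$-$d$ tree step function $f_{\wh\mT^t,\wh\b^t}\in\mF(\mV_{\mS_0^t}^{K^t})$ with $\|f_0^t - f_{\wh\mT^t,\wh\b^t}\|_n \lesssim q_0^t\,\|f_0^t\|_{\mathcal{H}^{\alpha^t}} (K^t)^{-\alpha^t/q_0^t}$, which is $\lesssim \varepsilon_n^t$ by the choice of $K^t$. Summing gives $\|f_0-\wh f\|_n\lesssim \varepsilon_n$ for $\wh f=\sum_t f_{\wh\mT^t,\wh\b^t}$. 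A neighborhood of $\wh f$ then contains all ensembles whose trees agree with $\wh\mT^t$ on topology and whose heights $\b^t$ lie in a Euclidean ball of radius $\varepsilon_n/\sqrt{T_0}$ around $\wh\b^t$. Its prior probability factors as $\pi(T=T_0)\prod_t \pi(q_0^t)\pi(\mS_0^t\mid q_0^t)\pi(K^t\mid T_0)\pi(\wh\mT^t\mid \mS_0^t,K^t)\cdot\Pi(\b^t\in B(\wh\b^t,\varepsilon_n/\sqrt{T_0}))$. Using the assumption $\|f_0^t\|_\infty\lesssim\log^{1/2}n$ to control the Gaussian ball, Lemma \ref{lemma:complexity} to quantify $\Delta(\mV_{\mS_0^t}^{K^t})$, and combining the logarithms across $t$, each factor contributes at most $\e^{-c n(\varepsilon_n^t)^2}$, so the product is bounded below by $\e^{-d\,n\varepsilon_n^2}$. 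The hypothesis $\log p\lesssim \min_t n^{q_0^t/(2\alpha^t+q_0^t)}$ is what makes the variable-selection cost $\tfrac{q_0^t}{n}\log\binom{p}{q_0^t}$ absorbable into $(\varepsilon_n^t)^2$.

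For the sieve and entropy \eqref{eq:entropy}, I take
\begin{equation*}
\mF_n \;=\; \bigcup_{T\leq \bar T_n}\bigcup_{\bm q:\,\sum q^t\leq Q_n}\bigcup_{\bmS}\bigcup_{\bm K:\,\sum K^t\leq \bar K_n}\bigl\{f_{\mE,\mB}\in\mF(\mV\mE_{\bmS}^{\bm K}):\|\mB\|_\infty\leq B_n\bigr\},
\end{equation*}
with $\bar T_n, Q_n, \bar K_n$ chosen as constant multiples of $n\varepsilon_n^2/\log n$ and $B_n$ polynomial in $n$. Counting ensembles via Lemma \ref{lemma:complexity} (a product over $T$ factors $\Delta(\mV_{\mS^t}^{K^t})\leq |\mS^t|^{K^t-1}n^{K^t}$) and covering each $\mB$-coordinate box with $\varepsilon_n$-balls yields $\log N(\varepsilon_n/36,\mF_n\cap\{\|f-f_0\|_n<\varepsilon_n\},\|\cdot\|_n)\lesssim \bar K_n\log n + Q_n\log p+\bar T_n\lesssim n\varepsilon_n^2$. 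The $\|\cdot\|_n$ bound on coefficients here uses that the stretching matrix has bounded operator norm after thinning redundant coordinates, analogously to the argument used in Theorem \ref{thm:weak_learner}.

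For the remaining mass \eqref{eq:remain}, each product prior supplies exponential decay: $\pi(T>\bar T_n)\lesssim \e^{-C_T \bar T_n}$ from \eqref{prior:T}; summing the Poisson tails of \eqref{prior:KK} over $t$ gives $\Pi(\sum K^t>\bar K_n\mid T)\lesssim \e^{-c\bar K_n\log \bar K_n}$; the dimension prior \eqref{prior:dim2} decays like $\e^{-a Q_n\log p}$ on the event $\sum q^t>Q_n$; and standard Gaussian tail bounds on \eqref{prior:beta} handle $\|\mB\|_\infty>B_n$. With $\bar T_n,\bar K_n,Q_n$ as above, each piece is $o(\e^{-(d+2)n\varepsilon_n^2})$. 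The main obstacle I expect is bookkeeping the variable-selection logarithms: showing that the \emph{product} prior \eqref{prior:dim2}\eqref{prior:subset2} across trees produces exactly the $q_0^t\log(p/q_0^t)$ penalty per component that fits into $(\varepsilon_n^t)^2$, rather than a single cumulative $\log(p/\sum q_0^t)$ penalty, so that $\varepsilon_n^2$ matches the stated sum-of-rates (bracketed between $\underline\varepsilon_n^2$ and $\bar\varepsilon_n^2$). Once this is aligned, \eqref{eq:post_conc} follows from Theorem 4 of Ghosal and Van der Vaart.
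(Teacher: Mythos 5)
Your proposal follows the same overall route as the paper's proof: approximate each additive layer $f_0^t$ with its own $k$-$d$ tree from Lemma~\ref{lemma:approx}, lower-bound the prior mass by the oracle ensemble with $T=T_0$ trees (as in Section~\ref{sec:eq:prior:ens}), and use Chernoff bounds on the product priors for the remaining-mass condition (Section~\ref{sec:remain:ens}). The one genuine technical divergence is in the entropy step and the sieve. The paper's sieve \eqref{sieve2} is restricted only through $\sum_{t}q^t\leq s_n$ and $\sum_t K^t\leq z_n$ (which bounds $T$ automatically since $K^t\geq 1$), and does \emph{not} truncate $\|\mB\|_\infty$; instead, it bounds the local covering number $N(\tfrac{\varepsilon}{36},\{f\in\mF(\mE):\|f-f_0\|_n<\varepsilon\},\|\cdot\|_n)$ directly via the singular value decomposition of the stretching matrix $\A(\mE)$ (reducing to an ellipsoid-in-ellipsoid covering in $\tilde K$ dimensions, see \eqref{continue1}). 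You instead add $\|\mB\|_\infty\leq B_n$ to the sieve and cover the coefficient box; this works, but you then have to pay for it with an extra Gaussian-tail term $\Pi(\|\mB\|_\infty>B_n)$ in the remaining-mass condition, whereas the paper sidesteps this entirely because the local entropy is over the intersection with the ball around $f_0$. Your remark that ``the stretching matrix has bounded operator norm after thinning redundant coordinates'' is the place where this would need to be made precise: the relevant control is $\lambda^2_{\max}(\mE)\leq K(\mE)(T\bar K)$ from Lemma~\ref{lem:eigenvalupper}, which is polynomial in $n$ and therefore harmless in a $\log$, not ``bounded.'' Both routes land the same entropy bound $\lesssim n\varepsilon_n^2$; the paper's SVD argument is just cleaner bookkeeping. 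Your worry at the end about whether the product prior \eqref{prior:dim2}--\eqref{prior:subset2} produces a per-component $q_0^t\log(p/q_0^t)$ cost is resolved exactly as the paper does: $\pi(\bmS_0\mid T,\bm q_0)$ and $\Delta(\mV\mE_{\bmS_0}^{\bm a_n})$ both factor across $t$, so the single-tree lower bound $L(q_0^t,a_n^t,\mS_0^t,\wh\b^t,\varepsilon_n^t)$ applies componentwise and the log terms add up to $n\varepsilon_n^2$.
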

\begin{proof}
Supplemental Material (Section 1).
\end{proof}
\vspace{-0.5cm}
{
\begin{remark}
Under the assumptions $\log p\lesssim n^{q_{0}^t/(2\alpha^t+q_{0}^t)}$ and $q_{0}^t \|f_0^t\|_{\mathcal{H}_{\alpha^t}}\lesssim\log^{1/2} n$, the second term $\frac{q_0^t}{n}\log\left(\frac{p}{q_0^t}\right)$ in $(\varepsilon_n^t)^2$  (defined earlier) is dominated by the first term $n^{-2\alpha^t/(2\alpha^t+q_0^t)}\log n$.  This is why the second term does not appear in  the rate $\varepsilon_n^2$ in Theorem \ref{thm:additivetrees}.
\end{remark}}

Failing to recognize the additive structure in $f_0$, single regression trees achieve the slower rate $n^{-\alpha/(2\alpha+q_0)}\log^{1/2} n$, according to Theorem \ref{thm:trees}. Theorem \ref{thm:additivetrees} thus provides an additional theoretical justification for Bayesian additive tree models suggesting their performance superiority over single trees when $f_0$ is additive. 


\subsection{Implementation Considerations}\label{sec:computation}
{Our priors differ from the widely used BART implementations in three ways: (1) we focus on the uniform prior of Denison et al. \cite{cart2}, (2) we assign a prior distribution on the number of trees and (c) we deploy the spike-and-slab wrapper. Implementations of our priors are feasible with some modifications of the existing software.
For the Bayesian CART prior that we analyze, Denison et al.  \cite{cart2} propose a reversible jump MCMC implementation. While this algorithm is different from BART,  the acceptance ratios in the Metropolis-Hastings step differ only very slightly.
Liu, Ro\v{c}kov\'{a} and Wang \cite{liu18} extended their sampler to the spike-and-tree (spike-and-forest) versions in two ways.  The first one is a Metropolis-Hasting strategy that  consists of joint sampling from variable subsets as well as trees (forests). As a faster alternative, they  proposed an approximate ABC sampling strategy based on data splitting (called {\sl ABC Bayesian Forests}).  

}

\section{Discussion}\label{sec:discussion}
In this work, we have laid down foundations for the theoretical study of Bayesian regression trees and their additive variants.
We have shown an optimal behavior of Bayesian CART, the first theoretical result  on this method.
We have developed several useful tools for analyzing additive regression trees (variants of the BART method), showing their optimal performance in both additive and non-additive regression. 
The smoothness order of  studied functions is restricted to values not exceeding one, a main limitation of our approach due to the fact that our approximations are piecewise constants \cite{ghosal_vdv, scricciolo,  Engel1994}. 
{ While in the one-dimensional case, step functions are not appealing estimators of a regression function that is thought to be smooth, methods like CART and BART are attractive and feasible solutions in complex high-dimensional data.} The limitation $\alpha\leq 1$ could be overcome by extending our approach to piecewise polynomials or kernels, an  elaboration that we leave for future investigation. {One such extension was recently proposed in a related paper by  Linero and Yang \cite{linero2018}. 
These authors obtained concentration results for a kernel method that can be regarded as a smooth variant of BART.   The results of \cite{linero2018} do not apply for single trees, only aggregates of kernels. In contrast, we study actual posteriors of single trees, as well as forests, and analyze sieves of step functions which are the essence of the actual BART method.

{While our priors do not exactly match the BART prior,  BART could be adapted to achieve the same optimality properties. The first modification is the splitting probability (as pointed out in Remark \ref{remark:broad_class}). The second modification is the spike-and-slab wrapper (as pointed out in Section \ref{sec:spike_tree}) or a modification of the prior on the split variables (as in \cite{linero2018}).
The prior distribution on the number of trees will only be beneficial in the additive model and is not needed when $f_0$ has one layer.

The assumption of a known $\sigma_0=\mathsf{Var}\, (\varepsilon_i)=1$ can be relaxed. It has been noted in the literature (e.g. \cite{vdv_zanten, dejonge2013}) that 
the general result of Ghosal and van der Vaart \cite{ghosal_vdv} (which we build upon) can be extended to the unknown $\sigma_0$ case. Such an extension was formally proved in Jonge and van Zanten (2013), who assume that $\sigma_0$ belongs to a compact interval $[a,b]$ and the prior $\pi(\sigma)$ concentrates on $[a,b]$. We could obtain our results under this restriction as well
by verifying suitably adapted conditions (2.1)-(2.3).
In related work, Yoo and Ghosal \cite{yoo2016} show optimal posterior concentration (in both $L_2$ and $L_\infty$ sense)  in non-parametric regression with unknown variance and B-spline tensor product priors. Next, they show that under an inverse-gamma prior, the posterior for $\sigma$ contracts at $\sigma_0$ at the same rate. Moreover, for any prior on $\sigma$ with positive and continuous density, the posterior of $\sigma$ is consistent. These results are obtained under the assumption that $f_0$ is uniformly bounded.
 We anticipate that  similar results will hold also for our priors when $f_0$ is uniformly bounded.\\
 }

{ \textbf{Acknowledgement}
We are grateful to the Associate Editor and two referees for helpful comments, and to Johannes Schmidt-Hieber for helpful discussions on the assumption $\alpha \leq 1$.
}

\section{Proof of Theorem \ref{thm:trees}}\label{proof:thm:trees}
Our approach consists of establishing conditions \eqref{eq:entropy}, \eqref{eq:prior} and \eqref{eq:remain} for $\varepsilon_n=n^{-\alpha/(2\alpha+q_0)}\log^\beta n$ for some $\beta\geq1/2$. { Note that Theorem \ref{thm:trees} is stated for $\beta=1/2$. We give a proof for the general case $\beta\geq 1/2$ under the assumptions $q_0\|f_0\|_{\Ha}\lesssim \log^{\beta}n$ and $\|f_0\|_\infty\lesssim\log^\beta n$ (Remark \ref{remark_beta}).}
 The first step requires constructing the sieve $\mathcal{F}_\mT^n\subset \mF_\mT$. For  a given $n\in\N$ and suitably large integers $q_n<k_n$ (chosen later), we define the sieve
$\mathcal{F}_\mT^n$ as consisting of step functions over small trees that split only  on a few variables, i.e.
$$
\mathcal{F}_\mT^n = \bigcup_{q = 0}^{q_n} \bigcup_{K=1}^{k_n} \bigcup_{\mS:|\mS|=q} \mF(\mV_{\mS}^K),
$$
where $\mF(\mV_{\mS}^K)$ was defined in \eqref{class:stepf}.
The optimal choice of $k_n$ and $q_n$ will follow from our considerations below.

\subsection{Condition \eqref{eq:entropy}}\label{sec:eq_entropy}

We start with a useful lemma that characterizes a useful upper bound on the covering number of the smaller sets $ \mF(\mV_{\mS}^K)$.

 \begin{lemma}\label{lemma:cover}
 Let $\mF(\mV^{K}_\mS)$ be the class of step functions  \eqref{class:stepf}. Then
 \begin{equation}\label{cover:bound}
N\Big(\tfrac{\varepsilon}{36}, \Big\{f \in \mF(\mV^{K}_\mS): \|f - f_0\|_n < \varepsilon\Big\}, \|.\|_n\Big) \leq \Delta(\mV^{K}_\mS)\left(\frac{108}{\bar{C}} \sqrt{n}\right)^K,
 \end{equation}
 where $\Delta(\mV^{K}_\mS)$ is the partitioning number of $\mV^{K}_\mS$.
 \end{lemma}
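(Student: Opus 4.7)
My plan is to decompose the covering problem along two axes: first the combinatorial choice of the tree topology $\mT \in \mV_\mS^K$, and then, for each fixed $\mT$, the Euclidean parameter $\b \in \R^K$ of step heights. Since the number of distinct tree partitions is bounded by $\Delta(\mV_\mS^K)$ (Lemma \ref{lemma:complexity}), it suffices to show that for each fixed $\mT$ the set $A_\mT = \{f_{\mT,\b} : \|f_{\mT,\b}-f_0\|_n < \varepsilon\}$ admits an $(\varepsilon/36)$-cover of cardinality at most $(108\sqrt{n}/\bar C)^K$; a union bound over $\mT$ then yields \eqref{cover:bound}.

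For fixed $\mT=\{\Omega_k\}_{k=1}^K$, I would exploit the fact that $\{\1_{\Omega_k}\}_{k=1}^K$ is an orthogonal basis in $L_2(\mathbb{P}_n^\x)$ with squared norms $\mu(\Omega_k)$, which are strictly positive by the validity condition \eqref{eq:balance}. Writing $\b^\star=\b^\star(\mT,f_0)$ for the unique minimizer of $\b\mapsto\|f_{\mT,\b}-f_0\|_n$, Pythagoras gives
\begin{equation*}
\|f_{\mT,\b}-f_0\|_n^2 \;=\; \|f_{\mT,\b-\b^\star}\|_n^2 + \|f_{\mT,\b^\star}-f_0\|_n^2,
\end{equation*}
so every $f_{\mT,\b}\in A_\mT$ satisfies $\|f_{\mT,\b-\b^\star}\|_n<\varepsilon$. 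Validity, in the form $\mu(\Omega_k)\ge \bar C^2/n$, then converts this function-space bound into the Euclidean bound
\begin{equation*}
\tfrac{\bar C^2}{n}\,\|\b-\b^\star\|_2^2 \;\le\; \sum_{k=1}^K (\beta_k-\beta_k^\star)^2\mu(\Omega_k) \;<\; \varepsilon^2,
\end{equation*}
so the parameters under consideration all lie in a Euclidean ball of radius $R=\sqrt{n}\,\varepsilon/\bar C$ centered at $\b^\star$.

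The reverse direction is even simpler: since $\mu(\Omega_k)\le 1$, we have $\|f_{\mT,\b}-f_{\mT,\b'}\|_n\le\|\b-\b'\|_2$, so any Euclidean $(\varepsilon/36)$-net for $\{\b:\|\b-\b^\star\|_2<R\}$ pushes forward to an $(\varepsilon/36)$-net for $A_\mT$ in $\|\cdot\|_n$. The standard volumetric estimate for covering a $K$-dimensional Euclidean ball of radius $R$ by Euclidean balls of radius $r$ (at most $(3R/r)^K$) with $R=\sqrt{n}\,\varepsilon/\bar C$ and $r=\varepsilon/36$ gives the per-tree count $(108\sqrt{n}/\bar C)^K$, and multiplying by $\Delta(\mV_\mS^K)$ finishes the proof. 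No genuine obstacle arises here—the only point needing care is that the projection step relies on $\mu(\Omega_k)>0$ for every $k$, which is precisely what validity guarantees and what allows the $\sqrt{n}/\bar C$ factor (rather than something depending on possibly vanishing cell masses) to appear in the bound.
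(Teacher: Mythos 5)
Your proposal is correct and follows essentially the same route as the paper's proof: bound the combinatorial piece by $\Delta(\mV_\mS^K)$, and for each fixed tree convert the $\|\cdot\|_n$-covering problem into a Euclidean ball covering problem via the two-sided bound $\tfrac{\bar C^2}{n}\|\b-\b'\|_2^2 \le \|f_{\mT,\b}-f_{\mT,\b'}\|_n^2 \le \|\b-\b'\|_2^2$ centered at the $\|\cdot\|_n$-projection of $f_0$, then apply the standard $(3R/r)^K$ volume bound. Your explicit invocation of Pythagoras to justify that $\|f_{\mT,\b}-f_0\|_n<\varepsilon$ implies $\|f_{\mT,\b-\b^\star}\|_n<\varepsilon$ merely spells out a step the paper leaves implicit in the word ``projection''; the substance is identical.
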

 \begin{proof}
Let $f_{\mT,\b_1}$ and $f_{\mT,\b_2} \in \mF(\mV^{K}_\mS)$ be two step functions supported on a single valid tree partition $\mT\in\mV^{K}_\mS$ with steps $\b_1\in\R^K$ and $\b_2\in\R^K$. Then, by the minimum leaf size condition, we have
$$
\frac{\bar{C}^2}{n}\|\b_1 - \b_2\|_2^2 \leq  \|f_{\mT,\b_1} -f_{\mT,\b_2}\|_n^2=\sum_{k=1}^K\mu(\Omega_k)(\beta_{1k}-\beta_{2k})^2 \leq \|\b_1 - \b_2\|_2^2.
$$ 
Denote by $f_{\mT,\wh{\b}}$ the $\|\cdot\|_n$ projection of $f_0$ onto $\mF(\mT)\subset \mF(\mV_{\mS}^K)$, the set of all step functions that live on a given partition $\mT$.  Then 
$\{\b:\|f_{\mT,\b}-f_0\|_n\leq \varepsilon\}\subset \{\b: \|\b-\wh{\b}\|_2\leq \varepsilon\sqrt{n}/\bar{C} \}$ and $\{\b:\|f_{\mT,\b}-f_0\|_n\leq \varepsilon/36\}\supset \{\b: \|\b-\wh{\b}\|_2\leq \varepsilon/36) \}$.
This relationship shows that the $\varepsilon/36$ covering number of an $||\cdot||_n$ ball  $\{\b:\|f_{\mT,\b}-f_0\|_n\leq \varepsilon\}$ can be bounded from above by the $\varepsilon/36$ covering number of an Euclidean ball of a radius $\varepsilon\sqrt{n}/\bar{C}$, which is 
bounded by {\small $\left(\frac{108}{\bar{C}} \sqrt{n}\right)^K$}. We can repeat this argument by projecting $f_0$ onto any valid tree topology $\mT\in \mV^{K}_\mS$. The number of such valid trees is no larger than $\Delta(\mV^{K}_\mS)$, which completes the proof.\qedhere
 \end{proof}

The covering number for the entire sieve $\mathcal{F}_\mT^n$  is then seen to satisfy
\begin{align*} 
&N\Big(\tfrac{\varepsilon}{36}, \Big\{f \in \mathcal{F}_\mT^n: \|f - f_0\|_n < \varepsilon\Big\}, \|.\|_n\Big)\\
&\quad\quad\quad<\sum_{q=0}^{q_n}\sum_{K=1}^{k_n}\sum_{\mS:|\mS|=q}
 N\Big(\tfrac{\varepsilon}{36}, \Big\{f \in \mF(\mV_{\mS}^K): \|f - f_0\|_n < \varepsilon\Big\}, \|.\|_n\Big).
\end{align*}
From Lemma \ref{lemma:cover} and Lemma \ref{lemma:complexity}, we  obtain the following upper bound 
 \begin{align}
&\sum_{q=0}^{q_n}\sum_{K=1}^{k_n} \binom{p}{q} (K-1)!q^K n^K\left(\frac{108}{\bar{C}} \sqrt{n}\right)^K\notag\\
&\hspace{3cm}<  \sum_{K=0}^{k_n} \left(\frac{108}{\bar{C}} q_n n^{3/2}\,K\right)^K\sum_{q=0}^{q_n} \binom{p}{q},\label{layer}
\end{align}
where we used  the fact $K!<K^K$. Next, using the regularized incomplete beta function representation of the Binomial {\sl cdf}, 
we can write
\begin{align}
\sum_{q=0}^{q_n}{p\choose q}&=2^p(p-q_n){p\choose q_n}\int_{0}^{1/2}x^{p-q_n-1}(1-x)^{q_n}\d x\notag\\
&\leq 2^{q_n+1}(p-q_n){p\choose q_n}\frac{1-1/2^{q_n+1}}{q_n+1}\leq 2^{q_n+1}{p\choose q_n}\frac{p-q_n}{q_n+1}.\label{binomial_cdf}
\end{align}
Using $\binom{p}{q_n} \leq (\e \, p/q_n)^{q_n}$, the quantity in \eqref{layer} can be bounded by
$$
\left(\frac{2\,\e\, p}{q_n}\right)^{q_n+1}\sum_{K=0}^{k_n} \left(\frac{108}{\bar{C}} q_n\, n^{3/2}\, k_n\right)^K < 
\left(\frac{2\,\e\, p}{q_n}\right)^{q_n+1}\frac{(C\,q_n\, n^{3/2}\, k_n)^{k_n+1}-1}{C\,q_n\, n^{3/2}\, k_n-1},
$$
where  $C=\left(\frac{108}{\bar{C}}\right)$. Finally, the entropy condition requires that  
the log-covering number, now upper bounded by
$$
(q_n + 1)\log(2\,\e\,p/q_n) + (k_n+1) \log(C\,q_n\, n^{3/2}\, k_n), 
$$
is no larger than (a constant multiple of) $n\,\varepsilon_n^2=n^{q_0/(2\alpha+q_0)}\log^{2\beta}n$. 
Under our assumption $\log p\lesssim n^{q_0/(2\alpha+q_0)}$,  this will be satisfied with {$q_n=\lceil C_q\mathrm{min}\{p,n^{q_0/(2\alpha+q_0)}\log^{2\beta} n/\log (p\vee n)\}\rceil$ for some $C_q\geq1$} and 
$k_n=\lfloor C_k n\,\varepsilon_n^2/\log n\rfloor \asymp n^{q_0/(2\alpha+q_0)}\log^{2\beta-1} n$.
The constants $C_q$ and $C_k$ will be determined later.

\subsection{Condition \eqref{eq:prior}}\label{sec:proof:tree1}
We wish to show that the prior assigns enough mass around the truth in the sense that
\begin{equation}
\Pi(f \in \F_\mT : \|f - f_0\|_n \leq \varepsilon_n) \geq \e^{-d\,n\varepsilon_n^2} \label{eq:prob2}
\end{equation}
for some large enough  $d>0$. 
We establish this condition by finding a lower bound on the prior probability in \eqref{eq:prob2}, using all step functions supported on a single good partition.
We denote by $\mS_0$ the true index set of active covariates, where $|\mS_0|=q_0$. According to Lemma \ref{lemma:approx}, there exists a tree-structured 
step function  $f_{\wt{\mT},\wt{\b}}\in\mF(\mV_{\mS_0}^K)$ for some $K=2^{q_0\,s}$ and $s\in\N\backslash\{0\}$
such that
\begin{equation}\label{aux1}
\|f_0 - f_{\wt{\mT},\wt{\b}}\|_n \leq   ||f_0||_{\Ha}C_1\, q_0/K^{\alpha/q_0}
\end{equation}
for some $C_1>0$.
The proof Lemma \ref{lemma:approx} is in the Supplemental Material (Section 3).

To continue with the proof of Theorem \ref{thm:trees},
We find the smallest $K=2^{s\,q_0}$ such that the  function $f_{\wt{\mT},\wt{\b}}\in\mF(\mV_{\mS_0}^K)$ in \eqref{aux1} safely approximates $f_0$ with an error that is no larger than $\varepsilon_n/2$, a constant multiple of the target rate.  Such  a $K$ will be denoted by $a_n$ and  is defined as the smallest $K$ such that  
 $\varepsilon_n \geq 2 C_0q_0 K^{-\alpha/q_0}$ for $C_0=||f_0||_{\Ha}C_1$. Then we have
\begin{equation}\label{bound}
\left(\frac{2C_0q_0}{\varepsilon_n}\right)^\frac{q_0}{\alpha} \leq a_n \leq  \left(\frac{2C_0q_0}{\varepsilon_n}\right)^\frac{q_0}{\alpha} + 1.
\end{equation}
We denote  by 
 $\wh{\mT}=\{\wh{\Omega}_k\}_{k=1}^{a_n} \in \mathcal{T}^{a_n}_{\mS_0}$ the $k$-$d$ tree partition from Lemma \ref{lemma:approx} obtained with the choice $K=a_n$. 
 The tree $\wh{\mT}$ is not only valid, but also balanced in the sense that $C^2_{min}/a_n<\mu(\widehat{\Omega}_k)\leq C_{max}^2/a_n$ for some $C_{min}<1<C_{max}$. The associated step sizes  of the approximating tree will be denoted by $\wh{\b}\in\R^{a_n}$.
Now, we lower-bound the prior probability of the neighborhood  $\{f \in \F_\mT : \|f - f_0\|_n^2 \leq \varepsilon_n^2\}$ by the prior probability of all regression trees supported on  $\wh{\mT}$ inside this neighborhood (denoted by $\mF(\wh{\mT})$):
\begin{align}
\Pi(f \in \F_\mT &: \|f - f_0\|_n^2 \leq \varepsilon_n^2)\geq  \pi(q_0)\pi(a_n) \frac{\Pi(f \in \F(\wh{\mT}) : \|f - f_0\|_n^2 \leq \varepsilon_n^2)}{\binom{p}{q_0}\Delta(\mV_{\mS_0}^{a_n})}. \label{continue}
\end{align}
For any $\b\in\R^{a_n}$, we have
$$
\|f_{\wh{\mT},\b}-f_{\wh{\mT},\wh{\b}}\|_n^2=\sum_{k=1}^{a_n}\mu(\wh{\Omega}_k)(\beta_k-\wh{\beta}_k)^2\leq \|\b-\wh{\b}\|_2^2
$$
and by  the reverse triangle inequality
$$
\| \b - \wh{\b}\|_2  \geq \|f_{\wh{\mT},\b}-f_{\wh{\mT},\wh{\b}}\|_n \geq \big| \|f_{\wh{\mT},\b} -f_0 \|_n - \|f_{\wh{\mT},\wh{\b}} - f_0\|_n \big|.
$$
Then the statement  $\|\b-\wh{\b}\|_2<\varepsilon_n/2$ implies
$
\|f_{0}-f_{\wh{\mT},\b} \|_n<\|f_{0}-f_{\wh{\mT},\wh{\b}} \|_n+\varepsilon/2<\varepsilon,
$
where the last inequality follows from the definition of $a_n$. Thus, we have
$$
\{\b:\|\b-\wh{\b}\|_2\leq \varepsilon_n/2 \}\subset \{f\in\mF(\wh{\mT}): \|f_{0}-f\|_n<\varepsilon_n\}.
$$ 
Now we can lower-bound \eqref{continue} with
\begin{equation}\label{L}
L(q_0,a_n,\mS_0,\wh{\b},\varepsilon_n)\equiv\pi(q_0)\pi(a_n) \frac{\Pi(\b\in\R^{a_n}:\|\b-\wh{\b}\|_2\leq \varepsilon_n/2)}{\binom{p}{q_0}\Delta(\mV_{\mS_0}^{a_n})}.
\end{equation}
In order to bound $\Pi(\b \in \R^{a_n} : \|\b - \wh{\b}\|_2 \leq \varepsilon_n/2)$, we follow the computations
 of \cite{ghosal_vdv}, Theorem 12, to obtain
\begin{align}\label{vdv_lower}
\Pi\Big(\b\in \mathbb{R}^{a_n} &: \|\b - \wh{\b}\|_2 \leq  \varepsilon_n/2\Big)
\geq\frac{ 2^{-a_n}   \e^{-\|\wh{\b}\|_2^2-\varepsilon_n^2/8}}{\Gamma(\frac{a_n}{2})\frac{a_n}{2}} \left(\frac{\varepsilon_n^2}{4}\right)^\frac{a_n}{2}.
\end{align}
From the triangle inequality (and because $\wh{\mT}$ is balanced) we have
$$
\|\wh{\b}\|_2 \leq  \frac{\sqrt{a_n}}{C_{min}}\|f_{\wh{\mT},\wh{\b}} \|_n\leq   \frac{\sqrt{a_n}}{C_{min}}\left(\|f_{\wh{\mT},\wh{\b}}-f_0\|_n+\|f_0\|_\infty\right)\leq 
  \frac{\sqrt{a_n}}{C_{min}}\left(\frac{\varepsilon_n}{2}+\|f_0\|_\infty\right).
$$ 
Thereby we can write $\|\wh{\b}\|_2^2 \leq C_2\|f_0\|_\infty^2 a_n$ for some constant $C_2>0$. Now we continue with a  lower bound to $L(q_0,a_n,\mS_0,\wh{\b},\varepsilon_n)$ defined in \eqref{L}.
Using the following facts $\Gamma(x) \leq x^{x}$, $\binom{p}{q_0} \leq (\e p/q_0)^{q_0}$ and $\Delta(\mV_{\mS_0}^{a_n}) <(a_n\,q_0\,n)^{a_n}$ (Lemma \ref{lemma:complexity}) and using \eqref{vdv_lower}, we arrive at the following lower bound:
\begin{equation}\label{aux2}
\frac{\pi(a_n) c^{-q_0}p^{-aq_0}}{ \left(\frac{\e\, p}{q_0}\right)^{q_0} (a_n\,q_0\,n)^{a_n} } \e^{-\varepsilon_n^2/8-a_n(C_2\|f_0\|_\infty^2 +\log2)} \left(\frac{\varepsilon_n^2}{4}\right)^\frac{a_n}{2}   \left(\frac{2}{a_n}\right)^{a_n/2 + 1}.
\end{equation}
Condition \ref{eq:prior} will be satisfied if  this quantity is at least as large as $\e^{-d\,n\varepsilon_n^2}$ for some large $d>0$. We  denote  $c(p, q_0, a_n) = c^{-q_0}p^{-aq_0}(q_0/\e p)^{q_0}$. Then we can rewrite \eqref{aux2} as
$$ \pi(a_n)c(p, q_0, a_n) \left(\sqrt{2}\e^{C_2\|f_0\|_\infty^2+ \log 2}a_n^{3/2}n\right)^{-a_n} \frac{2}{a_n} \left(\frac{\varepsilon_n}{q_0}\right)^{a_n}\e^{-\varepsilon_n^2/8}. $$
Taking minus the log of this quantity, Condition \eqref{eq:prior} will be met when
\begin{equation}\label{terms}
-\log c(p, q_0, a_n) -\log{\pi(a_n)} + a_n \log\left(\sqrt{2}\e^{C_2\|f_0\|_\infty^2 +\log2}a_n^{3/2}n\right) + a_n\log\left(\frac{q_0}{\varepsilon_n}\right)
\end{equation}
is smaller than a constant multiple of $n\varepsilon_n^2$. Above, we omitted the small terms ${\varepsilon_n^2/8}$ (since $\varepsilon_n \to 0$) and $\log(a_n/2)$. 
First, we note that
$$
-\log c(p, q_0, a_n)\leq q_0\log (c\,p^{a+1}\e/q_0).
$$ 
With $q_0\lesssim \log^\beta n$ and  $\log p\lesssim n^{q_0/(2\alpha+q_0)}$, we obtain $-\log c(p, q_0, a_n)\lesssim n\varepsilon_n^2$.
Next, focusing on the last term in \eqref{terms}, we obtain (from the left inequality in \eqref{bound}) the following bound 
 $$
 q_0/\varepsilon_n\leq a_n^{\alpha/q_0}/(2C_0)
 $$ 
 and hence $q_0/\varepsilon_n\lesssim a_n$ for  $\alpha\leq 1$ and $q_0\geq 1$.
Moreover, from the right inequality in \eqref{bound} we obtain for $\varepsilon_n=n^{-\alpha/(2\alpha+q_0)}\log^\beta n$ and $2C_0q_0\lesssim \log^\beta n$
\begin{equation}\label{aux3}
a_n\lesssim n^{q_0/(2\alpha+q_0)}.
\end{equation}
  Under our assumption $\|f_0\|_\infty\lesssim \log^\beta n$, \eqref{aux3} immediately yields $a_n\|f_0\|_\infty^2\lesssim n\varepsilon_n^2$. All of these considerations, combined with the fact $-\log{\pi(a_n)}\lesssim  a_n\log a_n$, yield  the following  leading term behind the last three summands in \eqref{terms}: $a_n\log (a_n^{3/2}n)$.  Using \eqref{aux3}, we obtain $a_n\log (a_n^{3/2}n)\lesssim a_n \log n\lesssim n\varepsilon_n^2$ for $\beta\geq1/2$.  Altogether, there exists $d>0$ such that \eqref{eq:prob2} is satisfied.

\subsection{Condition \eqref{eq:remain}}\label{sec:proof:tree:eq:remain}


In order to establish  Condition  \ref{eq:remain}, we begin by noting
$\Pi(\mF_\mT\backslash\mF_\mT^n)< \Pi(q>q_n)+ \Pi(K>k_n).
$
Thus, the condition will be met when both
$\Pi(q>q_n)=o(\e^{-(d+2)\,n\varepsilon_n^2})$ and $\Pi(K>k_n)=o(\e^{-(d+2)\,n\varepsilon_n^2})$, where $d$ is the constant deployed in Section \ref{sec:proof:tree1}.
First, we find that
$$
\Pi(q>q_n)\lesssim\sum_{k=q_n+1}^p(c\,p^a)^{-k}=(c\,p^a)^{-(q_n+1)}\frac{1-(cp^a)^{-(p-q_n)}}{1-1/(cp^a)}<(c\,p^a)^{-q_n}.
$$
With our choice  $q_n=\lceil C_q\mathrm{min}\{p,n^{q_0/(2\alpha+q_0)}\log^{2\beta} n/\log (p\vee n)\}\rceil$, it turns out that
\begin{equation}\label{eq:qdecay}
\Pi(q>q_n)\e^{(d+2)\,n\varepsilon_n^2}< \e^{-q_n[\log c+a\log p]+(d+2)\,n\varepsilon_n^2}\rightarrow 0
\end{equation}
for a large enough constant $C_q>0$. { Indeed, for $p>n$ we have $q_n\log p\asymp n\,\varepsilon_n^2$. For $p<n\,\varepsilon_n^2/\log n$ we have $q_n=p$ and $\Pi(q>q_n)=0$. Finally, for $n\,\varepsilon_n^2/\log n\leq p\leq n$, we have $q_n\log n\asymp n\,\varepsilon_n^2$ and $\log p\geq \frac{q_0}{2\alpha+q_0}\log n+(2\beta-1)\log\log n$.
For $c>1$ and $\beta\geq 1/2$,  we can write $q_n[\log c+a\log p]\leq C_q\frac{aq_0}{2\alpha+q_0}n^{q_0/(2\alpha+q_0)}\log^{2\beta} n$ and \eqref{eq:qdecay} holds for $C_q$ large enough.} Next, we apply the Chernoff bound for $\Pi(K>k_n)$. Namely, for any $t>0$ we can write
\begin{equation}\label{chernoff}
\Pi(K>k_n)<\e^{-t\,(k_n+1)}\mathbb{E}\, \e^{t\,K}\propto \e^{-t\,(k_n+1)}\sum_{k=1}^\infty \frac{\left(\e^{t}\lambda\right)^k}{k!}
\propto  \e^{-t\,(k_n+1)}\left(\e^{\e^t\lambda}-1\right).
\end{equation}
With our choice $k_n=\lfloor C_kn\varepsilon_n^2/\log n\rfloor\asymp C_kn^{q_0/(2\alpha+q_0)}\log^{2\beta-1}n$  (Section \ref{sec:eq_entropy}) and  with $t=\log k_n$ we obtain
$$
\Pi(K>k_n)\e^{(d+2)\,n\varepsilon_n^2}\lesssim \e^{-(k_n+1)\log k_n+ \lambda k_n+(d+2)\,n\varepsilon_n^2}\rightarrow 0
$$
for a large enough constant $C_k$.

\section{Proof of Theorem \ref{thm:additivetrees}}\label{sec:proof:additivetrees}
We aim to establish conditions  \eqref{eq:entropy}, \eqref{eq:prior} and  \eqref{eq:remain} for $\varepsilon_n^2=\sum_{t=1}^{T_0}({\varepsilon}_n^t)^2$, where ${\varepsilon}_n^t=n^{-\alpha^t/(2\alpha^t+q_0^t)}\log^{\beta^t} n$. 
Our sieve consists of valid forests with either (a) many trees that are small (weak learners), or (b) a few large trees (strong learners). We impose a joint requirement on $\sum_{t=1}^TK^t$ so that the overall number of leaves in the ensemble is small. At the same time, we require that  $\sum_{t=1}^Tq^t$ (the upper bound on the number of active variables in the ensemble) is small as well. The sieve is constructed as follows:
\begin{equation}\label{sieve2}
\mF_\mE^n=\bigcup_{T=1}^\infty\bigcup\limits_{\substack{\q: \sum_{t=1} ^Tq^t\leq s_n}} \bigcup\limits_{\substack{\bmS: |\mS^t|=q^t}}
\bigcup\limits_{\substack{\bm{K}:\sum_{t=1}^T K^t\leq z_n}} \mF(\mV\mE_{\bmS}^{\bm K})
\end{equation}
for some integer values $s_n$ and $z_n$.
Throughout this section we denote $\bar{K}=\frac{1}{T}\sum_{t=1}^T K^t$.
\subsection{Condition \ref{eq:entropy}}\label{sec:sieve:ensemble}
{We first obtain the following upper bound on the log-covering number 
\begin{equation}\label{continue1}
\log N\left(\frac{\varepsilon}{36},\{f_{\mE,\mB}\in\mF(\mE):\|f_{\mE,\mB}-f_0\|_n<\varepsilon\},\|\cdot\|_n\right)\lesssim 
(T\times \bar K)\log(108\sqrt{n})
\end{equation}
where
 $\mF(\mE)=\left\{f_{\mE,\mB}:[0,1]^p\rightarrow\R:f_{\mE,\mB}(\x)=\sum_{t=1}^Tf_{\mT^t,\b^t}(\x);\b^t\in\R^{K^t} \right\}$ is the set of all additive step functions supported on a single partition ensemble $\mE\in\mV\mE_{\bmS}^{\bm{K}}$. We denote by $\wt{\mT}(\mE)=\{\widetilde{\Omega}_k\}_{k=1}^{K(\mE)}$ the global partition associated with $\mE$, consisting of $K(\mE)$ global cells. For $\mB_1,\mB_2\in\R^{T\times \bar{K}}$, we denote by $f_{\mE,\mB_1},f_{\mE,\mB_2}\in\mF(\mE)$ two additive regression trees that  sit on the same partition ensemble $\mE$. Let $\bar{\b}_1=\A(\mE)\mB_1$ and $\bar{\b}_2=\A(\mE)\mB_2$ be the aggregated step sizes, as defined in \eqref{bar_mapping}, where $\A(\mE)$ is the stretching matrix.  Then we can write
$$
\frac{1}{n}||\bar{\b}_1-\bar{\b}_2||_2^2 \leq ||f_{\mE,\mB_1}-f_{\mE,\mB_2}||_n^2=\sum_{k=1}^{K(\mE)}\mu(\widetilde{\Omega}_k)(\bar{\beta}_{1k}-\bar{\beta}_{2k})^2\leq ||\bar{\b}_1-\bar{\b}_2||_2^2.
$$
Deploying the singular value decomposition $\A(\mE)=\U\DD\V^T$
we write $\wt\mB_1=\bm V^T\mB_1\in\R^{\wt K}$ and $\wt\mB_2=\bm V^T\mB_2\in\R^{\wt K}$, where $\wt K\leq\min\{K(\mE),T\bar K\}$.  Using the fact that $\U$ is unitary, we have
$$
\frac{1}{n}||\DD(\tilde\mB_1-\tilde\mB_2)||_2^2\leq||f_{\mE,\mB_1}-f_{\mE,\mB_2}||_n^2\leq ||\DD(\tilde\mB_1-\tilde\mB_2)||_2^2.
$$
We write $f_{\mE,\mB_2}$ to be the projection of $f_0$ onto $\mF(\mE)$ and note that 
$\{\mB\in\R^{T\bar K}: \|f_{\mE,\mB}-f_{\mE,\mB_2}\|_n\leq \varepsilon\}\subset \{\wt\mB:\|\bm D(\wt\mB-\wt\mB_2)\|_2\leq \sqrt{n} \varepsilon \}$
and 
$\{\mB\in\R^{T\bar K}: \|f_{\mE,\mB}-f_{\mE,\mB_2}\|_n\leq \varepsilon/36\}\supset \{\wt\mB:\|\bm D(\wt\mB-\wt\mB_2)\|_2\leq  \varepsilon/36 \}.$
The covering number of $\{f_{\mE,\mB}\in\mF(\mE):\|f_{\mE,\mB}-f_0\|_n\leq \varepsilon\}$ can be thus bounded from above by the 
minimal number of   smaller ellipsoids $\{\wt\mB_1\in\R^{\wt K}:\|\bm D(\wt\mB_1-\wt\mB_2)\|_2\leq  \varepsilon/36 \}$
needed to cover a larger ellipsoid $\{\wt\mB_1\in\R^{\wt K}:\|\bm D(\wt\mB_1-\wt\mB_2)\|_2\leq \sqrt{n} \varepsilon \}$.
Since these ellipsoids have {\sl the same scaling factors} $\bm D$, this number is {\sl the same} as the minimal number of 
 little balls $ \{\wt\mB_1\in\R^{\wt K}:\|\wt\mB_1-\wt\mB_2\|_2\leq  \varepsilon/36 \}$ needed to  cover $ \{\wt\mB_1\in\R^{\wt K}:\|\wt\mB_1-\wt\mB_2\|_2\leq \sqrt{n} \varepsilon \}$. 
 The links between coverings of ellipsoids and balls can be found, for instance, in Dumer \cite{dumer}.
This altogether implies that  the covering number is  bounded by $(108\sqrt{n})^{T\bar K}$.
}

Now we find an upper bound on the number of valid ensembles $\mE\in\mV\mE_{\bmS}^{\bm K}$  inside the sieve $\mF_\mE^n$. 
To start, we note that given $(T,\q,\bm{K},\bmS)$, there are at most
$\prod_{t=1}^T(K^t\,q^t\,n)^{K^t}$ valid
ensembles $\mV\mE_{\bmS}^{\bm K}$. This bound is obtained from Lemma \ref{lemma:complexity} by combining all possible $T$-tuples of trees.\footnote{The order of trees in $\mE$ matters.}  Given $(T,\q)$, there are  $\prod_{t=1}^T{p\choose q^t}$ sets of subsets $\bm{\mS}=\{\mS^1,\dots,\mS^T\}$ satisfying the constraint $|\mS^t|=q^t$. This leads to an overall upper bound
\begin{align*}
\sum_{T=1}^{\min\{ s_n, z_n\}}&\sum_{\bm{K}:\sum_{t=1}^T K^t\leq z_n} \sum_{\q:\sum_{t=1}^T q^t\leq s_n}\prod_{t=1}^T{p\choose q^t}\left(K^t\,q^t\,n\right)^{K^t}\notag \\
&< \sum_{T=1}^{\min\{s_n, z_n\}}\sum_{\bm{K}:\sum_{t=1}^T K^t\leq z_n} \left(z_n\,s_n\,n\right)^{z_n} \sum_{\q:\sum_{t=1}^T q^t\leq s_n}\prod_{t=1}^T\left(\frac{p\,\e}{q^t}\right)^{q^t}\\
&<s_n^{s_n+1}z_n^{z_n+1} \left(z_n\,s_n\,n\right)^{z_n}\left({p\,\e}\right)^{s_n}.
\end{align*}
Combining this bound with \eqref{continue1},
 we obtain the following  bound
\begin{align}
&\log N\Big(\tfrac{\varepsilon}{36}, \Big\{f \in \mathcal{F}_\mE^n: \|f - f_0\|_n < \varepsilon\Big\}, \|.\|_n\Big)<(s_n+1)\log s_n+(z_n+1)\log z_n\notag  \\
&\quad\quad\quad\quad+ z_n\log\left(z_n\,s_n\,n\right)+s_n\log\left({p\,\e}\right)+z_n\log\left(108\, \sqrt{n}\right). \label{overall_bound}
\end{align}
Condition \ref{eq:entropy} will be met when \eqref{overall_bound} is smaller than (a constant multiple of) $n\varepsilon_n^2=\sum_{t=1}^{T_0}n(\varepsilon_n^t)^2$. 
With the choice $z_n=\lfloor C_z n\varepsilon_n^2/\log n\rfloor$ and {$s_n= \lceil C_s n^{q_0/(2\alpha+q_0)}\log^{2\beta} n/\log (p\vee n)\rceil$}, where $C_s$ and $C_z$ are  large enough constants to be determined later, this condition is satisfied.

\subsection{Condition \eqref{eq:prior}}\label{sec:eq:prior:ens}
To establish Condition \eqref{eq:prior} for tree ensembles, we begin by finding a single additive tree that approximates well. We will heavily leverage our findings from Section \ref{sec:proof:tree1},  noting that the problem of approximating an additive function $f_0$ with a sum of trees can be decomposed into smaller problems of approximating each layer $f_0^t$ separately.

Denote by $a_n^t$ the smallest leaf size of a $k$-$d$ tree (defined in Remark \ref{remark:kd})  needed to approximate $f_0^t$ with an error smaller than  $\varepsilon_n^t/2$, where $\varepsilon_n^t=n^{-\alpha^t/{(2\alpha^t+q^t_0)}}\log^{\beta^t}n$.  Such a tree step function approximation exists according to Lemma \ref{lemma:approx} when $\mX$ is $(M,\mS_0^t)$-regular. We will denote this approximation with $f_{\wh{\mT}^t,{\wh{\b}}^t}(\x)$. Moreover, with $\bm{a}_n=(a_n^1,\dots,a_n^{T_0})'$ we denote the vector of such minimal tree sizes, where each  $a_n^t$ satisfies \eqref{bound} with $q_0^t,\alpha^t$ and $\varepsilon_n^t$. Next, we will denote by $\wh{\mE}=\{\wh{\mT}^1,\dots,\wh{\mT}^{T_0}\}$ the approximating partition ensemble with step heights $\wh{\mB}=(\wh{\b}^{1\prime},\dots,\wh{\b}^{T_0\prime})'$. 
The individual tree approximations $f_{\wh{\mT}^t,\wh{\b}^t}(\x)$ are woven into an approximating forest $f_{\wh{\mE},\wh{\mB}}(\x)=\sum_{t=1}^{T_0}f_{\wh{\mT}^t,\wh{\b}^t}(\x)\in \F(\mV\mE_{\bmS_0}^{\bm{a}_n})$,
where $\bm{\mS}_0=\{\mS^1_0,\dots,\mS^{T_0}_0\}$.

Arguing as in Section \ref{sec:proof:tree1},  the statement
$\| \wh{\b}^t-\b^t\|_2<\frac{\varepsilon_n^t}{2}$ for all $1\leq t\leq T_0$ implies  $\|f^t_0- f_{\wh{\mT}^t,\b^t}\|_n<\varepsilon_n^t$ for all $1\leq t\leq T_0$ and $\b^t\in\R^{a_n^t}$. This further implies
$$
\|f_0-f_{\wh{\mE},\mB}\|_n \leq  \sum_{t=1}^{T_0}\|f_0^t-f_{\wh{\mT}^t, \b^t}\|_n\leq \sum_{t=1}^{T_0}\varepsilon_n^t \leq \sqrt{T_0} \varepsilon_n,
$$
 for any $\mB=(\b^{1\prime},\dots,\b^{T_0\prime})'\in\R^{T_0\times \bar{\bm{a}}_n}$, where the final inequality is due to Cauchy-Schwarz and where $\bar{\bm{a}}_n=\frac{1}{T_0}\sum_{t=1}^{T_0}a_n^t$.  Denote by $\mF(\wh{\mE})$ the set of all additive trees supported on the partition ensemble $\wh{\mE}$.
Then we can write
\begin{align}
\Pi&\left(f\in\mF(\wh{\mE}):  \| f_0-f\|_n\leq \varepsilon_n\right) \notag\\
&\quad\quad \geq\Pi\left(\b\in \R^{\sum_{t=1}^{T_0}a_n^t}: \|\wh{\b}^t-\b^t \|_2 \leq \frac{\varepsilon_n^t}{2\sqrt{T_0}}\quad\text{for each }\,t=1,\dots, T_0\right)\notag\\
&\quad\quad = \prod_{t=1}^{T_0}
\Pi\left(\b^t\in \R^{a_n^t}: \|\wh{\b}^t-\b^t \|_2 \leq \frac{\varepsilon_n^t}{2\sqrt{T_0}}\right).\notag \label{step1} 
\end{align}
 Because we assumed $\beta_{j}^t\sim\mathcal{N}(0,1/T)$, given $T$, we  can directly use  \eqref{vdv_lower} to lower-bound the above with
\begin{equation}\label{step2}
 \prod_{t=1}^{T_0}\frac{ 2^{-a_n^t}   \e^{-\|\wh{\b}^t\|_2^2-(\varepsilon_n^t)^2/8}}{\Gamma(\frac{a_n^t}{2})\frac{a_n^t}{2}} \left(\frac{(\varepsilon_n^t)^2}{4}\right)^\frac{a_n^t}{2}.
\end{equation}
Because each tree $\wh{\mT}^t$ is a $k$-$d$ tree and is by definition balanced, we have $\|\wh{\b}^t\|_n^2\lesssim a_n^t\|f_0^t\|_\infty^2$.
Now we can directly apply all our calculations from Section \ref{sec:proof:tree1}. In particular, using \eqref{step2} and noting that $\Delta(\mV\mE_{\bmS_0}^{\bm{a}_n})<\prod_{t=1}^{T_0}\Delta(\mV_{\mS_0^t}^{a_n^t})$,
we obtain
\begin{align*}
\Pi\left(f\in\mF_\mE:  \| f_0-f\|_n\leq \varepsilon_n\right)&\geq \pi(T_0)\pi(\bm{q}_0\C T_0)\pi(\bm{a}_n\C T_0)\pi(\bmS_0\C T,\bm{q}_0)\times\\
&\times\pi(\wh{\mE}\C \bmS_0,\bm{a}_n)\Pi\left(f\in\mF(\wh{\mE}):  || f_0-f_{\wh{\mE},\mB}||_n\leq \varepsilon_n\right)\\
&> \pi(T_0)\prod_{t=1}^{T_0}L(q_0^t,a_n^t,\mS_0^t,\wh{\b}^t,\varepsilon_n^t),
\end{align*}
where $L(\cdot)$ was defined in \eqref{L}. It follows from Section \eqref{sec:proof:tree1} that 
$$
-\log L(q_0^t,a_n^t,\mS_0^t,\wh{\b}^t,\varepsilon_n^t)\lesssim n(\varepsilon_n^t)^2
$$
for each $1\leq t\leq T_0$ when $q_0^t\lesssim \log^{\beta^t}n$, $\log p\lesssim \min\limits_{1\leq t\leq T_0}n^{q^t/(2\alpha^t+q_0^t)}$ and $T_0\lesssim n$.  The last condition $T_0\lesssim n$ is needed under our prior $K^t\sim \mathrm{Poisson}(\lambda/T)$ so that $-\log \pi(a_n)\lesssim a_n\log a_n+a_n\log T_0\lesssim n(\varepsilon_n^t)^2=n^{q_0^t/(2\alpha^t+q_0^t)}\log^{2\beta^t} n$ for $\beta^t\geq 1/2$.
Putting all the pieces together, we obtain the following lower bound
$$
\Pi\left(f\in\mF_{\mE}:  \| f_0-f\|_n\leq \varepsilon_n\right)\geq \pi(T_0)\e^{-d\,n \sum_{t=1}^{T_0}(\varepsilon_n^{t})^2}
$$
for some suitably large $d>0$. The last requirement needed for Condition \eqref{eq:prior} to be satisfied  is  that $\pi(T_0)\geq \e^{-d\,n\varepsilon_n^2}$.
Our prior $\pi(T) \propto \e^{-C_T T}$ safely satisfies this requirement.

\subsection{Condition \eqref{eq:remain}}\label{sec:remain:ens}
The condition entails showing  that  $\Pi(\mF_\mE\backslash\mF_\mE^n)=o(\e^{-(d+2)\,n\varepsilon_n^2})$ for $d$ deployed in the previous section. It suffices to show that 
 $$\left[\Pi\left((T, \bm{K}) : \sum_{t=1}^T K^t>z_n\right)+\Pi\left( (T, \bm{q}): 
\sum_{t=1}^T q^t>s_n\right)\right]\e^{(d+2)\,n\varepsilon_n^2}\rightarrow 0.$$
Because we assume
$ K^t \mid T \iid \mathrm{Poisson}(\lambda/T)$ for some $\lambda\in\R$ (according to our definition in \eqref{prior:KK}), 
we can  apply a similar Chernoff bound  as in \eqref{chernoff}. Namely, we have for any $\gamma>0$
\begin{align*}
\Pi\left( \sum_{t=1}^T K^t>  z_n\right)&=\sum_{T=1}^{\infty}\pi(T)\Pi\left( \sum_{t=1}^T K^t> z_n \mid  T \right)\\
&\lesssim \e^{-\gamma(z_n+1)}\sum_{T=1}^\infty\pi(T)\left(\e^{\e^\gamma\lambda/T}-1\right)^T\lesssim  \e^{-\gamma(z_n+1)+\e^\gamma\lambda}.
\end{align*}
With $z_n=\lfloor C_z n\varepsilon_n^2/\log n\rfloor\sim \sum_{t=1}^{T_0}n^{q_0^t/(2\alpha^t+q_0^t)}\log^{2\beta^t-1}n$ and $\gamma=\log z_n$, we have $\Pi\left( \sum_{t=1}^T K^t> z_n\right)\e^{d\,n\varepsilon_n^2}\rightarrow 0$ for a large enough  constant $C_z>0$. 
Next,  with the independent product prior \eqref{prior:dim2},  the Chernoff bound  gives
\begin{align*}
\Pi\left(\sum_{t=1}^T q^t > s_n \mid T\right) \leq \e^{-\gamma (s_n+1)} \prod_{t=1}^T \mathbb{E}\left[\e^{\gamma q^t}\right] \lesssim \e^{-\gamma (s_n+1)}\e^{-T\log\left[1-\e^\gamma/(cp^a)\right]}
\end{align*}
for any $\gamma>0$, where we used the fact 
$$
 \mathbb{E}\left[\e^{\gamma q^t}\right] =\sum_{q=0}^p \left[\e^{\gamma\,}/(cp^a)\right]^q<\frac{1}{1-\e^{\gamma\,}/(cp^a)}.
$$
{With $\gamma=\log (p\vee n)$ and $a>2$, we can write
\begin{align*}
\Pi\left(\sum_{t=1}^T q^t > s_n \mid T\right)&\lesssim \e^{-(s_n+1)\,\log (p\vee n)}\e^{-T\log\left[1-1/(cp^{a-1})\right]}\\
&< \e^{-(s_n+1)\log (p\vee n)-T\log\left[1-1/(cp)\right]}.
\end{align*}
Next,  we have 
$$
\Pi\left(\sum_{t=1}^T q^t > s_n\right)\lesssim \e^{ -(s_n+1)\log{(p\vee n)}}\sum_{T=1}^\infty \pi(T) \e^{T\log[1+1/(cp-1)]}.
$$
With $\pi(T)\propto \e^{-C_T T}$, where $C_T>\log 2$,  we have
$$
\Pi\left(\sum_{t=1}^T q^t > s_n\right)\lesssim \e^{ - (s_n+1)\log{(p\vee n)}}\sum_{T=1}^\infty \e^{-T(C_T-\log 2)}\lesssim \e^{ - (s_n+1)\log{(p\vee n)}}.
$$
With  $s_n = \lfloor C_s n\varepsilon_n^2 / \log{(p\vee n)}\rfloor $ we have $\Pi\left(\sum_{t=1}^T q^t >s_n\right)\e^{(d+2)\,n\varepsilon_n^2}\rightarrow 0$ for a large enough constant $C_q$.
}

\section{Proof of Theorem \ref{thm:weak_learner}}  \label{proof:thm:weak_learner}
The sieve will be very similar to \eqref{sieve2}. The only difference is that each tree in the ensemble is now constrained to depend on the same set of active variables $\mS$. To mark this difference, we have denoted the partition ensembles with $\mV\mE_{\mS}^{\bm K}$ instead of $\mV\mE_{\bmS}^{\bm K}$. 
Throughout this section, we use the following sieve:
\begin{equation}\label{sieve3}
\mF_\mE^n=\bigcup_{T=1}^\infty\bigcup_{q=0}^{q_n} \bigcup\limits_{\substack{\mS: |\mS|=q}}
\bigcup\limits_{\substack{\bm{K} : \sum_{t=1}^T K^t \leq z_n}} \mF(\mV\mE_{\mS}^{\bm K}).
\end{equation}

\subsection{Condition \ref{eq:entropy}}
Our sieve \eqref{sieve3} is embedded in \eqref{sieve2}, where the number of ensembles $\mE$ inside $\mF_\mE^n$ is now upper-bounded by
\begin{align*}
\sum_{T=1}^{z_n}& \sum_{q=0}^{q_n}\sum_{\bm{K}:\sum_{t=1}^T K^t\leq z_n}{p\choose q}\prod_{t=1}^T\left(K^t\,q\,n\right)^{K^t}<z_n^{z_n+1}(q_n+1) \left(z_n\,q_n\,n\right)^{z_n}\left({p\,\e}\right)^{q_n}.
\end{align*}
Using the same arguments as in Section \ref{sec:sieve:ensemble}, we choose  $z_n =\lfloor C_zn\varepsilon_n^2/\log{n}\rfloor$ and {$q_n =\lceil C_q\mathrm{min}\{p,n^{q_0/(2\alpha+q_0)}\log^{2\beta} n/\log (p\vee n)\}\rceil$}, for which the condition holds.

\subsection{Condition \eqref{eq:prior}}
The key ingredient for establishing  Condition \eqref{eq:prior} is the following lemma on the existence of a tree ensemble that approximates $f_0$ well.

\begin{lemma}\label{lemma:approx:ensemble}
Assume $f \in \Ha_{p}\,\cap\,\mC(\mS)$, where $|\mS|=q$, and that $\mX$ is $(M,\mS)$-regular. Then  for any $s\in\N\backslash\{0\}$, there exists an additive tree  function $f_{\wh{\mE},\,\wh{\mB}}\in \mF(\mV\mE_{\mS}^{\bm{K}})$  consisting of $T=2^{s\,q-1}$ trees, each with $K^t=s\,q+1$ leaves, such that 
$$
\|f - f_{\wh{\mE},\,\wh{\mB}}\|_n \leq   ||f||_{\Ha}C\, M^\alpha\, q/K(\wh{\mE})^{\alpha/q}
$$ 
for some $C>0$, where $K(\wh{\mE})=2^{s\,q}$. 
\end{lemma}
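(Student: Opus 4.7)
The plan is to start from the single $k$-$d$ tree produced by Lemma \ref{lemma:approx} and surgically decompose it into many comb-shaped weak learners whose pointwise sum is exactly the original $k$-$d$ step function, so the approximation error is inherited verbatim.

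First, invoke Lemma \ref{lemma:approx} with $K = 2^{sq}$ to obtain a $k$-$d$ tree $\wh{\mT} = \{\wh\Omega_k\}_{k=1}^{2^{sq}}$ and step heights $\wh\b \in \R^{2^{sq}}$ satisfying $\|f - f_{\wh\mT,\wh\b}\|_n \leq \|f\|_{\Ha} C\, M^\alpha q / K^{\alpha/q}$. This $\wh{\mT}$ is a complete binary tree of depth $sq$ (since the $k$-$d$ construction cycles through the $q$ coordinates in $\mS$ and performs $s$ passes), so its $2^{sq}$ leaves naturally pair into $T = 2^{sq-1}$ sibling pairs $(L_{t,1}, L_{t,2})$, $1 \leq t \leq T$, sharing a common parent $P_t$ at depth $sq-1$.

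Second, define the weak learner $\wh{\mT}^t$ as the comb along the root-to-$P_t$ path: keep all $sq-1$ splits on this path, with the $sq-1$ \emph{off-path} siblings serving as terminal cells $O_{t,1},\dots,O_{t,sq-1}$, and add one final split at $P_t$ producing the two on-path leaves $L_{t,1}, L_{t,2}$. This yields exactly $K^t = sq+1$ terminal nodes. Since the first $q$ levels of the $k$-$d$ construction exhaust all coordinates in $\mS$, each $\wh{\mT}^t$ splits at least once on every coordinate of $\mS$, so $\wh{\mT}^t$ belongs to a family of the form $\mV^{sq+1}_{\mS}$; validity (minimum leaf mass $\bar C^2/n$) is inherited because the sibling cells are leaves of the balanced $k$-$d$ tree and the off-path cells are \emph{unions} of such leaves. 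Assemble $\wh{\mE} = \{\wh{\mT}^1,\dots,\wh{\mT}^T\}$ and observe that every split of $\wh\mT$ lies on the root-to-$P_t$ path of \emph{some} sibling pair $t$, and no $\wh{\mT}^t$ introduces splits foreign to $\wh\mT$; hence the global partition coincides with the $k$-$d$ tree itself, i.e.\ $\wt{\mT}(\wh{\mE}) = \wh{\mT}$, and in particular $K(\wh{\mE}) = 2^{sq}$.

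Third, assign the step heights $\wh\mB$ by putting all mass on the bottom sibling cells: for each $t$ set $\wh\beta^t_{L_{t,1}} = \wh\beta_{L_{t,1}}$, $\wh\beta^t_{L_{t,2}} = \wh\beta_{L_{t,2}}$, and $\wh\beta^t_{O_{t,i}} = 0$ for every off-path cell. Every leaf $L$ of $\wh{\mT}$ belongs to exactly one sibling pair (the pair indexed, say, by $t(L)$), so for every $\x \in L$,
\begin{equation*}
f_{\wh{\mE},\wh\mB}(\x) \;=\; \sum_{t=1}^{T} f_{\wh{\mT}^t,\wh\b^t}(\x) \;=\; \wh\beta_L + \sum_{t \neq t(L)} 0 \;=\; f_{\wh{\mT},\wh\b}(\x).
\end{equation*}
Therefore $f_{\wh{\mE},\wh\mB} \equiv f_{\wh{\mT},\wh\b}$, and the desired bound
$\|f - f_{\wh{\mE},\wh\mB}\|_n \leq \|f\|_{\Ha} C\, M^\alpha q / K(\wh{\mE})^{\alpha/q}$
follows at once from Lemma \ref{lemma:approx}.

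The argument is essentially combinatorial, so no hard analytic estimate is needed beyond Lemma \ref{lemma:approx}. The one spot that requires care is verifying that the comb decomposition simultaneously delivers all three structural requirements at once, namely (i) each weak learner is \emph{valid} in the sense of Definition \ref{ass:balance}, (ii) every weak learner uses each coordinate in $\mS$ at least once so that $\wh{\mT}^t \in \mV^{sq+1}_{\mS}$, and (iii) the global partition of the ensemble equals $\wh{\mT}$ so that the approximation error is preserved without refinement. I expect (iii), together with pinning down the precise indexing of off-path cells, to be the main bookkeeping obstacle, but each requirement is guaranteed by the balanced, coordinate-cycling structure of the $k$-$d$ tree.
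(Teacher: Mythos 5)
Your proof is correct and uses the same comb decomposition as the paper: both start from the $k$-$d$ tree of Lemma \ref{lemma:approx} with $2^{sq}$ leaves, split it into $T=2^{sq-1}$ completely imbalanced ``comb'' trees of $sq+1$ leaves each (one per bottom-level sibling pair), and observe that the global partition of the ensemble coincides with $\wh{\mT}$. The one place you diverge is the step-height assignment: the paper sets the aggregated steps $\bar{\b}=\wh{\b}$ and pulls back $\wh{\mB}$ via the Moore--Penrose pseudoinverse of the stretching matrix, en route recording the structure $\A(\wh{\mE})=[\mathrm{I}_{\wh{K}},\A_1]$ and the consequent bound $\lambda_{\min}^2(\wh{\mE})\geq 1$, whereas you give an explicit preimage (leaf values on the two on-path cells, zero on off-path cells) and verify $f_{\wh{\mE},\wh{\mB}}\equiv f_{\wh{\mT},\wh{\b}}$ pointwise. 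Your assignment is cleaner and avoids the linear-algebra detour for the lemma itself; just be aware that the identity-block structure of $\A(\wh{\mE})$ and the bound $\lambda_{\min}^2(\wh{\mE})\geq 1$, which the paper establishes inside this proof, are reused later when verifying Condition \eqref{eq:prior} in Theorem \ref{thm:weak_learner}, so if you take your route you will need to record that fact separately (it does follow directly from your construction, since the on-path columns form the identity after relabeling).
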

\begin{proof}
From Lemma \ref{lemma:approx}, we know that there exists a single tree function $f_{\wh{\mT},\wh{\b}}\in\mF(\mV_\mS^{K})$ with $\wh{K}=2^{s\,q}$ leaves which approximates well.
We regard  the full symmetric tree $\wh{\mT}$ as the global partition  of the approximating partition ensemble $\wh{\mE}$, i.e. $\wt{\mT}(\wh{\mE})=\wh{\mT}$. Moreover,  $\wh{\b}$ is regarded as the vector of aggregated steps, i.e. $\bar{\b}=\wh{\b}$ (the definition of the aggregated steps is in \eqref{bar_mapping}). The actual ensemble $\wh{\mE}$ is obtained from $\wh{\mT}$ by redistributing the cuts among $T$ trees, each  with $K^t=K$ leaves, in the following way. We take completely imbalanced trees that keep refining one cell until the resolution reaches the tree depth $\log_2\wh{K}$. These trees have $K^t=\log_2\wh{K}+1$ leaves and we need $T=\wh{K}/2$ of those to sum up towards $\wh{\mT}$. This decomposition is illustrated in Figure \ref{bayestrees}, where a full symmetric  tree $\wh{\mT}$ (Figure \ref{bayestrees2}) with $\wh{K}=8$ leaves has been trimmed into ${T}=\wh{K}/2=4$ smaller trees with $K^t=\log_2 \wh{K}+1=4$ leaves.
The decomposition yields a tree ensemble $\wh{\mE}=\{\mT^1,\dots,\mT^T\}$ with a stretching matrix $\A(\wh{\mE})=[\mathrm{I}_{\wh{K}},\bm{A}_1]$ (after a suitable permutation of columns), where $\bm{A}_1$ is some binary matrix.  It follows from Lemma 1(g) of Govaerts and Pryce \cite{singul} that  $\lambda^2_{min}(\wh{\mE})=1+\sigma^2_{min}(\A_1)\geq 1$, where $\sigma_{min}(\A_1)$ denotes the smallest singular value of $\A_1$. Moreover, we have $K(\wh{\mE})=\wh{K}$. Finally, we use \eqref{bar_mapping} to obtain the individual tree steps  via $\wh{\mB}=(\A(\wh{\mE})'\A(\wh{\mE}))^{\dagger}\A(\wh{\mE})'\bar{\b},$ where $\A^\dagger$ denotes the Moore pseudoinverse of $\A$. The rest follows from Lemma \ref{lemma:approx}. \qedhere

\end{proof}

Now we proceed with Condition \ref{eq:prior}.
Denote by $\wh{\mE}$ the approximating ensemble from Lemma \ref{lemma:approx:ensemble}. Recall that the global partition $\widetilde{\mT}(\wh{\mE})=\{\widetilde{\Omega}_k\}_{k=1}^{K(\wh{\mE})}$ is a $k$-$d$ tree, which is balanced in the sense that $C^2_{min}/ K(\wh{\mE})\leq \mu(\widetilde{\Omega}_k)\leq C_{max}^2/ K(\wh{\mE})$ for some constants $C_{min}<1<C_{max}$ and $k=1,\dots, K(\wh{\mE})$. Next, we find  the smallest  $K(\wh{\mE})$  such that $ ||f||_{\Ha}C\, M^\alpha\, q_0/K(\wh{\mE})^{\alpha/q_0}<\varepsilon_n/2$.  This value will be denoted by  $a_n$ and it
 satisfies \eqref{bound}. Next, we denote by $\wh{T}=a_n/2$ the number of approximating trees and by  $\wh{\bm{K}}=(\wh{K}^1,\dots,\wh{K}^T)'$ the vector of leaves, where $\wh{K}^t=\log_2 a_n +1$ (again we are using the construction from Lemma \ref{lemma:approx:ensemble}). Then, using similar arguments as in Section \ref{sec:eq:prior:ens} we can lower-bound $\Pi(f\in\mF_\mE : \|f - f_0\|_n \leq \varepsilon_n) \label{eq:prob}
$ with
\begin{equation}\label{lb}
\frac{\pi(\wh{T})\pi(\wh{\bm{K}}\C \wh{T}) \pi(q_0) }{ \left(\frac{\e\, p}{q_0}\right)^{q_0}\prod_{t=1}^{\wh{T}}(\wh{K}^t\,q_0\,n)^{\wh{K}^t}} \Pi\left(f\in\mF(\wh{\mE}): ||f - f_0||_n\leq \varepsilon_n\right),
\end{equation}
where $\mF(\wh{\mE})$ consists of all additive tree functions supported on $\wh{\mE}$.
We denote by  $\widetilde{a}_n=\sum_{t=1}^{\wh{T}}\wh{K}^t=a_n/2(\log_2 a_n+1)$ and by $\wh{\mB}\in\R^{\wt{a}_n}$ the steps of the approximating additive trees from Lemma \ref{lemma:approx:ensemble}.  Because  $\mu(\widetilde{\Omega}_k)\leq C_{max}^2/K(\wh{\mE})$ we obtain  for any arbitrary vector $\mB\in\R^{\wt{a}_n}$ (similarly as in Section \ref{sec:sieve:ensemble})
$$
\|f_{\wh{\mE},\mB}-f_{\wh{\mE},\wh{\mB}}\|_n\leq C_{max}\lambda_{max}(\wh{\mE})/\sqrt{K(\wh{\mE})}\|\mB-\wh{\mB}\|_2
$$
and thereby
\begin{align*}
\|\mB-\wh{\mB}\|_2&\geq  \sqrt{K(\wh{\mE})}/(C_{max}\lambda_{max}(\wh{\mE}))\big| \|f_0-f_{\wh{\mE},\mB}\|_n-\|f_0-f_{\wh{\mE},\wh{\mB}}\|_n\big|.
\end{align*}
Combined with the fact $\lambda_{\max}^2(\mE)\leq K(\mE)\widetilde{a}_n$ (as shown in the proof of Lemma \ref{lem:eigenvalupper}), the statement
$
\|\mB-\wh{\mB}\|_2<\frac{\varepsilon_n}{2} \frac{1}{C_{max}\sqrt{\widetilde{a}_n}}
$
implies
$\|f_0-f_{\wh{\mE},\mB}\|_n<\varepsilon_n.$  Therefore we have 
$$ 
\Pi\left(f\in\mF(\wh{\mE}): \|f - f_0\|_n\leq \varepsilon_n\right)>\Pi\left(\mB\in\R^{\widetilde a_n}:\|\mB-\wh{\mB}\|_2<\frac{\varepsilon_n}{2} \frac{1}{C_{max}\sqrt{\widetilde{a}_n}} \right).
$$
 Moreover, because  $\mu(\widetilde{\Omega}_k)\geq C^2_{min}/K(\wh{\mE})$ for some $C_{min}<1$, we have
$$
\|\wh{\mB}\|_2\leq \frac{\sqrt{a_n}}{C_{min}\lambda_{min}(\wh{\mE})}\|f_{\wh{\mE},\wh{\mB}}\|_n\leq \frac{\sqrt{a_n}}{C_{min}}\left(\frac{\varepsilon_n}{2}+\|f_0\|_\infty\right),
$$
where we used the fact $\lambda^2_{min}(\wh{\mE})\geq 1$ (proof of Lemma \ref{lemma:approx:ensemble}). Therefore we have $\|\wh{\mB}\|_2^2\leq C_2 a_n\|f_0\|_\infty^2$ for some $C_2>0$.
Following the calculations from Section \ref{sec:proof:tree1} (namely \eqref{aux2}),  we continue to lower-bound \eqref{lb}  with
\begin{equation}\label{eq:ratio2}
\frac{\pi(\wh{T})\pi(\wh{\bm{K}}\C \wh{T}) \pi(q_0)\e^{-\frac{\varepsilon_n^2}{8C^2_{max}\wt{a}_n}-{a}_n(C_2\|f_0\|_\infty^2 +\log2)}}{\left(\frac{\e\, p}{q_0}\right)^{q_0}[(\log_2a_n+1)\,q_0\,n]^{\widetilde{a}_n}} \left(\frac{\varepsilon_n^2}{4C^2_{max}\wt{a}_n}\right)^\frac{\widetilde{a}_n}{2}   \left(\frac{2}{\widetilde{a}_n}\right)^{\widetilde{a}_n/2 + 1}.
\end{equation}
This quantity should be at least $\e^{-d\,n\varepsilon_n^2}$ for some suitably large $d>0$.
Now,  with our prior \eqref{prior:KK} we can write
\begin{align*}
\pi(\wh{\bm{K}}\C \wh{T})&\gtrsim\prod_{t=1}^{\wh{T}}\e^{\wh{K}^t\log(\lambda/\wh{T}) -\wh{K}^t\log \wh{K}^t}
\gtrsim \e^{ - \widetilde a_n \log (\log_2{a_n} + 1) + \widetilde a_n \log(2\lambda/a_n)}.
\end{align*}
This quantity can be lower-bounded by $ \e^{ -  D\,\widetilde a_n \log a_n}
$ for some $D>0$. Then we can write 
\begin{align*}
&\frac{\pi(\wh{T})\pi(\wh{\bm{K}}\C \wh{T}) \pi(q_0)}{\left(\frac{\e\, p}{q_0}\right)^{q_0}[(\log_2a_n+1)\,q_0\,n]^{\widetilde{a}_n}}>\\
&\quad\quad\quad\quad\quad\quad{\e^{-C_T a_n/2}} \e^{-D\,\widetilde a_n \log{a_n} - q_0 \log(c\,\e\, p^{a+1}/q_0) } 
\e^{-\widetilde{a}_n \log(q_0 n (\log_2 {a_n} + 1)) }. 
\end{align*}
By our assumptions $q_0\lesssim\log^\beta n$ and $p\lesssim n^{q_0/(2\alpha+q_0)}$, the term $\e^{- q_0 \log(c\,\e\,p^{a+1}/q_0)}$ will safely be larger than $\e^{-d_1n\varepsilon_n^2}$ for some $d_1>0$. We take all the remaining important terms in \eqref{eq:ratio2}, aiming to show that $(i) \ \widetilde{a}_n \log(\widetilde{a}_n/\varepsilon_n^2)$, $(ii) \ a_n\|f_0\|_\infty^2$, $(iii) \ \widetilde a_n \log(q_0 n \log_2{a_n})$ and $(iv) \ \widetilde a_n \log{\widetilde a_n}$ are  bounded by a constant multiple of $n\varepsilon_n^2$. 
From \eqref{bound}, we obtain $a_n\lesssim n^{q_0/(2\alpha+q_0)}$ under our assumption $2C_0q_0\lesssim\log^\beta n$. Then we can write
\begin{equation}\label{atilde}
\widetilde a_n=a_n/2(\log_2a_n+1)\lesssim n^{q_0/(2\alpha+q_0)}\log n.
\end{equation}
Using this bound,  we verify that $(i)$-$(iv)$ are bounded by a constant multiple of $n\varepsilon_n^2 = n^\frac{q_0}{2\alpha + q_0}\log^{2\beta}n$.
First, note that
$$\widetilde{a}_n \log(\widetilde{a}_n/\varepsilon_n^2)\lesssim n^\frac{q_0}{2\alpha + q_0}\log n \log\left(n^\frac{q_0}{2\alpha + q_0}n^{\frac{2\alpha}{2\alpha + q_0}}\log^{1-2\beta}n  \right).$$ 
This quantity is bounded by a multiple of $ n^\frac{q_0}{2\alpha + q_0}\log^{2\beta}n$ when $\beta \geq 1$.
 Next, we can write $
 a_n\|f_0\|_\infty^2\lesssim n^\frac{q_0}{2\alpha + q_0} \log^{2\beta}n.$
Lastly,  it follows from \eqref{atilde} that $ \widetilde a_n \log{\widetilde a_n}\lesssim n^{q_0/(2\alpha+q_0)}\log^2 n$ and $ \widetilde a_n \log(q_0 n \log_2 a_n)\lesssim n^{q_0/(2\alpha+q_0)}\log^2 n$  . 
To sum up, there exists $d>0$ such that $\Pi(f\in\mF_\mE : \|f - f_0\|_n \leq \varepsilon_n)>\e^{-d\,n\varepsilon_n^2}$ for $\beta\geq1$.
\subsection{Condition \eqref{eq:remain}}\label{sec:last_one}
The condition  $\Pi(\mF_\mE\backslash\mF_\mE^n)\e^{(d+2)\, n\varepsilon_n^2}\rightarrow 0$  is verified similarly as in Section \ref{sec:remain:ens} and Section \ref{sec:proof:tree:eq:remain}. For $\Pi(q>q_n)$, we use the bound from Section \ref{sec:proof:tree:eq:remain} with {$q_n=\lceil C_q\mathrm{min}\{p,n^{q_0/(2\alpha+q_0)}\log^{2\beta} n/\log (p\vee n)\}\rceil$} and a large enough constant $C_q$. For $\Pi\left( (T,\bm{K}):\sum_{t=1}^TK^t>z_n\right)$ we use the bound from Section \ref{sec:remain:ens} with $z_n=\lfloor C_zn\varepsilon_n^2/\log n\rfloor$ and a large enough constant $C_z$.
 
\section{Proof of Lemma \ref{lemma:approx}}\label{proof:lemma:approx}
We start with an auxiliary statement showing that  when $f$ is $\alpha$-H\"{o}lder continuous, we can grow a  step function on any given (tree) partition so that the approximation error will be
governed by { cell diameters.

 Namely, for $f \in \Ha_{p}\,\cap\,\mC(\mS)$ and a valid tree partition $\mT=\{\Omega_k\}_{k=1}^K\in \mV_\mS^K$, there exists  a step function $f_{\mT,\,\wh{\b}}(\x) = \sum_{k=1}^{K} \wh{\beta}_k\,\1_{\Omega_k}(\x)$  such that 
$
\|f - f_{\mT,\,\wh{\b}}\|_n   \leq ||f||_{\Ha} \sqrt{\sum_{k=1}^K\mu(\Omega_k) \mathrm{diam}^{2\alpha}(\Omega_k;\mS)}.
$ 
Indeed, given $\mT$ and design points $\mX$, we take $ \wh{\beta}_k=\frac{1}{n_k}\sum_{i=1}^nf(\x_i)\1_{\Omega_k}(\x_i)$, where $n_k=\mu(\Omega_k)n$. Then for $\x_j\in\Omega_k\cap \mX$ we have, from H\"{o}lder continuity, 
\begin{align*}
|f(\x_j)-f_{\mT,\wh{\b}}(\x_j)|&<\frac{1}{n_k}\sum_{\x_i\in\Omega_k}|f(\x_j)-f(\x_i)|\\
& \leq ||f||_{\Ha} \frac{1}{n_k}\sum_{\x_i\in\Omega_k}||\x_{j\mS}-\x_{i\mS}||_2^\alpha\leq  ||f||_{\Ha}\, \mathrm{diam}^\alpha(\Omega_k;\mS).
\end{align*}
Then the approximation error satisfies
\begin{align}\label{eq:approx_gap}
||f- f_{\mT,\wh{\b}}||_n& \leq ||f||_{\Ha} \sqrt{\sum_{k=1}^K\mu(\Omega_k) \mathrm{diam}^{2\alpha}(\Omega_k;\mS)}. \,\,\qedhere
\end{align}

To continue with the proof of  \eqref{approx2}, we grow a $k$-$d$ tree partition $\wh{\mT}=\{\wh{\Omega}_k\}_{k=1}^K$ (as explained in Remark \ref{remark:kd}) and construct an approximating step function $f_{\wh{\mT},\wh{\b}}$, as outlined above.

Using  \eqref{eq:approx_gap}, the statement \eqref{approx2} then follows from
\begin{align*}
||f- f_{\wh{\mT},\wh{\b}}||_n
&<C_{\max}\,||f||_{\Ha}\,\max\limits_{1\leq k\leq K}\mathrm{diam}^\alpha(\wh{\Omega}_k;\mS),
\end{align*}
where we used the fact that $\mu(\wh{\Omega}_k)\leq C^2_{max}/K$  in $k$-$d$ trees.
A minor modification of the proof of  Proposition 6 in \cite{verma2009} yields
$\sum_{k=1}^K\mu(\wh{\Omega}_k)\mathrm{diam}(\wh{\Omega}_k;\mS)\leq  \frac{{q}}{K^{1/q}}.$ The rest follows from Definition \ref{def:regular}. \hspace{5cm} $\square$
}

\section{Auxiliary Result}
\begin{lemma} \label{lem:eigenvalupper}
Assume a valid ensemble $\mE$ consisting of $T$ trees, each with $K^t$ leaves.  Let $\lambda_{max}^2(\mE)$  be the largest eigenvalue of $\wt \A(\mE)=\A(\mE)'\A(\mE)$. Then
\begin{equation}\label{continue2}
\lambda_{max}^2(\mE)\leq  K(\mE) (T\times \bar{K}),
\end{equation}
where $\bar{K}=\frac{1}{T}\sum_{t=1}^TK^t$ and where $K(\mE)$ denotes the number of rows of $\A(\mE)$.
\end{lemma}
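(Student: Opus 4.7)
The key observation is that $\lambda_{\max}^2(\mE)$ is precisely the squared spectral norm of $\A(\mE)$, since the nonzero eigenvalues of $\A(\mE)'\A(\mE)$ are the squared singular values of $\A(\mE)$. I will therefore bound $\|\A(\mE)\|_2^2$ by a more easily computable matrix norm, namely the Frobenius norm, using the standard inequality $\|\A\|_2 \leq \|\A\|_F$ (which follows from $\|\A\|_2^2 = \lambda_{\max}(\A'\A) \leq \mathrm{tr}(\A'\A) = \|\A\|_F^2$).

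\textbf{Steps.} First, I will recall from Definition \ref{def:concat} that $\A(\mE)$ is a $K(\mE) \times (T\bar K)$ binary matrix whose entries lie in $\{0,1\}$. Second, I will invoke the chain
\[
\lambda_{\max}^2(\mE) \;=\; \|\A(\mE)\|_2^2 \;\leq\; \|\A(\mE)\|_F^2 \;=\; \sum_{i,j} a_{ij}^2 \;\leq\; K(\mE)\cdot(T\bar K),
\]
where the last inequality uses that there are $K(\mE) \cdot (T\bar K)$ entries, each bounded by $1$ in absolute value. That directly yields \eqref{continue2}.

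\textbf{Remark on the obstacle.} There is essentially no obstacle: the bound is a one-line consequence of the spectral-vs-Frobenius norm comparison together with the 0-1 structure of $\A(\mE)$. In fact, because every row of $\A(\mE)$ contains \emph{exactly} $T$ ones (each global cell $\wt{\Omega}_i$ intersects exactly one local cell of each of the $T$ trees), one even has the sharper identity $\|\A(\mE)\|_F^2 = T\cdot K(\mE)$; but the looser bound in \eqref{continue2} is all that is needed downstream (in the proof of Theorem \ref{thm:weak_learner}, where it is used to control $\|\wh{\mB}\|_2$).
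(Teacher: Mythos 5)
Your proof is correct, but it takes a genuinely different route from the paper's. The paper's proof applies the Gershgorin circle theorem directly to the $(T\bar K)\times(T\bar K)$ Gram matrix $\wt\A(\mE)=\A(\mE)'\A(\mE)$: each entry $\wt a_{ij}=\sum_k a_{ki}a_{kj}$ is a sum of $K(\mE)$ zero-one products, hence bounded by $K(\mE)$, so each Gershgorin disc has radius at most $T\bar K\cdot K(\mE)$, which is the claimed bound. You instead pass through the stretching matrix $\A(\mE)$ itself and invoke the spectral-vs-Frobenius comparison $\|\A\|_2^2\leq\|\A\|_F^2$, then count entries. Both arguments are elementary, but yours is arguably tighter and slightly cleaner: since, as you observe, each of the $K(\mE)$ rows of $\A(\mE)$ contains exactly $T$ ones, the Frobenius identity gives the sharper $\lambda_{\max}^2(\mE)\leq T\,K(\mE)$ (without the extra $\bar K$), whereas the Gershgorin route as written in the paper naturally produces the looser $T\bar K\,K(\mE)$. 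Either bound suffices for the downstream use in the proof of Theorem \ref{thm:weak_learner}, so this is a distinction without a cost, but your approach shows the stated constant is not optimal.
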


\begin{proof}
 By the Gershgorin  circle theorem, all eigenvalues of  $\wt{\A}(\mE)=\left(\widetilde{a}_{ij}\right)$ lie inside the union of intervals $[\wt{a}_{ii} - \sum_{j \neq i} \wt{a}_{ij}, \wt{a}_{ii} + \sum_{j \neq i} \wt{a}_{ij}]$ for $i = 1, \ldots, T\times\bar{K}$. As explained in Section \ref{sec:bart}, the diagonal and off-diagonal entries of $\wt{\A}(\mE)$  quantify the persistence and the overlap in terms of the number of intersecting global partitioning cells. The  magnitude $|\widetilde{a}_{ij}|$  is no larger than $K(\mE)$ for each $1\leq i,j\leq T\times\bar{K}$. The upper bound on the maximal eigenvalue $\lambda^2_{max}(\mE)$ is thus $K(\mE)(T\times \bar{K})$. 
  \qedhere
\end{proof}

\end{document}